\numberwithin{equation}{section}
\theoremstyle{plain}
\newtheorem{theorem}{Theorem}[section]
\newtheorem{definition}[theorem]{Definition}
\newtheorem{lemma}[theorem]{Lemma}
\newtheorem{remark}[theorem]{Remark}
\newcommand{\one}{{{\rm 1\mkern-1.5mu}\!{\rm I}}}
\newcommand{\eps}{{\epsilon}}
\begin{document}

\title[Inequality of quenched and averaged rate functions]{Differing averaged
and quenched large deviations for Random Walks in Random
Environments in dimensions two and three}

\author{Atilla Yilmaz}
\address{Faculty of Mathematics\\
Weizmann Institute of Science}
\curraddr{Department of Mathematics\\
University of California, Berkeley}
\email{atilla@math.berkeley.edu}
\author{Ofer Zeitouni}
\address{Faculty of Mathematics,
Weizmann Institute of Science {\tt and} 
School of Mathematics, University of Minnesota}
\email{zeitouni@math.umn.edu}
\date{September 28, 2009. Revised March 15, 2010}
\subjclass[2000]{60K37, 60F10, 82C41.}
\keywords{Random walks, large deviations,
disordered media, fractional moment, change of measure.}


\begin{abstract}
We consider the quenched and the averaged (or annealed)
large deviation rate functions 
$I_q$ and $I_a$ for space-time and (the usual) 
space-only RWRE on $\mathbb{Z}^d$. By Jensen's inequality, $I_a\leq I_q$.

In the space-time case, when $d\geq3+1$, $I_q$ and $I_a$ are known to 
be equal on an open set containing the typical velocity $\xi_o$. When $d=1+1$, we prove that $I_q$ and $I_a$ are equal only at $\xi_o$. Similarly, when $d=2+1$, we show that $I_a<I_q$ on a punctured neighborhood of $\xi_o$.

In the space-only case, we provide a class of non-nestling walks on $\mathbb{Z}^d$ with $d=2$ or $3$, and prove that $I_q$ and $I_a$ are not identically equal on any open set containing $\xi_o$ whenever the walk is in that class. This is very different from the known results for non-nestling walks on $\mathbb{Z}^d$ with $d\geq4$.
\end{abstract}

\maketitle

\section{Introduction}

\subsection{The models}\label{SubsectionModels} Consider a discrete time Markov chain on the $d$-dimensional integer lattice $\mathbb{Z}^d$ with $d\geq1$. For any $x,z\in\mathbb{Z}^d$, denote the transition probability from $x$ to $x+z$ by $\pi(x,x+z)$. Refer to the transition vector $\omega_x:=(\pi(x,x+z))_{z\in\mathbb{Z}^d}$ as the \textit{environment} at $x$. If the environment $\omega:=(\omega_x)_{x\in\mathbb{Z}^d}$ is sampled from a probability space $(\Omega,\mathcal{B},\mathbb{P})$, then this process is called \textit{random walk in a random environment} (RWRE). Here, $\mathcal{B}$ is the Borel $\sigma$-algebra corresponding to the product topology.

For every $y\in\mathbb{Z}^d$, define the shift $T_y$ on $\Omega$ by $\left(T_y\omega\right)_x:=\omega_{x+y}$. In order to have some statistical homogeneity in the environment, $\mathbb{P}$ is generally assumed to be stationary and ergodic with respect to $(T_y)_{y\in\mathbb{Z}^d}$. In this paper, we will make the stronger assumption that
\begin{equation}\label{cakmaeminabi}
\mbox{$\mathbb{P}$ is a product measure with equal marginals.}
\end{equation}
In other words, $\omega=(\omega_x)_{x\in\mathbb{Z}^d}$ is a collection of independent and identically distributed (i.i.d.) random vectors.

The set $\mathcal{R}:=\{z\in\mathbb{Z}^d: \mathbb{P}(\pi(0,z)>0)>0\}$ 
is the \textit{range} of allowed steps of the walk (here and throughout,
we often 
use $0$ to denote the origin in $\mathbb{Z}^d$ when no confusion
occurs). 
Let $(e_i)_{i=1}^d$ denote the canonical basis for $\mathbb{Z}^d$. The walk is said to be \textit{space-time} if
\begin{equation}\label{space-time}
\mathcal{R}=\mathcal{R}_{st}:=\{(z_1,\ldots,z_d)\in\mathbb{Z}^d: |z_1|+\cdots+|z_{d-1}|=1, z_d=1\},
\end{equation}
and it is said to be \textit{space-only} if
\begin{equation}\label{space-only}
\mathcal{R}=\mathcal{R}_{so}:=\{\pm e_i\}_{i=1}^d.
\end{equation}
In either case,
we will assume throughout the paper
that there exists a $\kappa>0$ such 
that $\mathbb{P}(\pi(0,z)\geq\kappa)=1$ for every $z\in\mathcal{R}$. 
This condition is known as \textit{uniform ellipticity}.

Space-time is a natural term for the case
(\ref{space-time}) since then,
  the walk decomposes into two parts. Its projection on the $e_d$-axis is deterministic and can be identified with time. The motion in the span of $(e_i)_{i=1}^{d-1}$ can be thought of as a variation of space-only RWRE where the environment is freshly sampled at each time step. To emphasize this decomposition, we will write the dimension as $d=(d-1)+1$. For example, when $d=3$, we will say that the dimension is $2+1$.

For every $x\in\mathbb{Z}^d$ and $\omega\in\Omega$, the Markov chain with environment $\omega$ induces a probability measure $P_x^\omega$ on the space of paths starting at $x$. Statements about $P_x^\omega$ that hold for $\mathbb{P}$-a.e.\ $\omega$ are referred to as \textit{quenched}. Statements about the semi-direct product $P_x:=\mathbb{P}\times P_x^\omega$ are referred to as 
\textit{averaged} (or \textit{annealed}). 
Expectations under $\mathbb{P}, P_x^\omega$ and $P_x$ 
are denoted by $\mathbb{E}, E_x^\omega$ and $E_x$, respectively. 

See \cite{ZeitouniSurvey06} for a survey of results and open problems on RWRE.

It is clear that no model satisfies both (\ref{space-time}) and (\ref{space-only}). Nevertheless, it turns out that many of the results that hold for space-only RWRE are valid under also the space-time assumption, and it is fair to say that space-time RWRE is easier to analyze than space-only RWRE because (\ref{space-time}) ensures that the walk never visits the same point more than once.

\subsection{Regeneration times}\label{regss}

In the next subsection, we will give a brief survey of the previous results on large deviations for RWRE in order to put the present work in context. Some of these results involve certain random times which are introduced below for convenience.

Let $\left(X_n\right)_{n\geq0}$ denote the path of a space-only RWRE. Consider a unit vector $\hat{u}\in\mathcal{S}^{d-1}$.
Define a sequence $\left(\tau_m\right)_{m\geq0}$ of random times, which are referred to as \textit{regeneration times} (relative to $\hat{u}$), by $\tau_o:=0$ and
$$\tau_{m}:=\inf\left\{j>\tau_{m-1}:\langle X_i,\hat{u}\rangle<\langle X_j,\hat{u}\rangle\leq\langle X_k,\hat{u}\rangle\mbox{ for all }i,k\mbox{ with }i<j<k\right\}$$
for every $m\geq1$. 
(Regeneration times first appeared in the work of Kesten \cite{KestenReg} on one-dimensional RWRE. They were adapted to the multidimensional setting
by Sznitman and Zerner, c.f.\ \cite{SznitmanZerner99}.)
Because we assumed the environment 
$\omega=(\omega_x)_{x\in\mathbb{Z}^d}$ to be
an i.i.d.\ collection,
if the walk is directionally transient relative to $\hat{u}$, i.e., if $P_o\left(\lim_{n\to\infty}\langle X_n,\hat{u}\rangle=\infty\right)=1$,
then $P_o\left(\tau_m<\infty\right)=1$ for every $m\geq1$.
In this setup,
as noted in \cite{SznitmanZerner99}, the significance of $\left(\tau_m\right)_{m\geq1}$ is due to the fact that 
$$\left(X_{\tau_m+1}-X_{\tau_m},X_{\tau_m+2}-X_{\tau_m},\ldots,X_{\tau_{m+1}}-
X_{\tau_m}, \tau_{m+1}-\tau_m\right)_{m\geq1}$$ 
is an i.i.d.\ sequence under $P_o$. 

The walk is said to satisfy Sznitman's transience condition (\textbf{T}) if
$$E_o\left[\sup_{1\leq i\leq\tau_1}
\exp\left\{c_1\left|X_i\right|\right\}\right]<\infty\mbox{ for some }c_1>0.$$
(Here and throughout, the norm $|\cdot|$ denotes the
$\ell_2$ norm).
When $d\geq2$, Sznitman \cite{SznitmanT} proves that (\ref{cakmaeminabi}), (\ref{space-only}) and (\textbf{T}) imply a \textit{ballistic} law of large numbers (LLN), an averaged central limit theorem and certain large deviation estimates.

Condition (\textbf{T}) holds as soon as the walk is \textit{non-nestling} relative to $\hat{u}$, i.e., when the random drift vector
\begin{equation}\label{nonnestabi}
v(\omega):=\sum_{z\in\mathcal{R}}\pi(0,z)z\qquad\mbox{satisfies}\qquad\mathrm{ess}\inf_{\mathbb{P}}\langle v(\cdot),\hat{u}\rangle>0.
\end{equation}
The walk is said to be non-nestling if it is non-nestling relative to some unit vector. Otherwise, it is referred to as \textit{nestling}. In the latter case, the convex hull of the support of the law of $v(\cdot)$ contains the origin.

In the case of space-time RWRE, regeneration times are defined naturally by
 taking $\hat{u}=e_d$ and $\tau_m=m$ for every $m\geq1$. 
Clearly, the space-time walk is always non-nestling relative to $\hat u=e_d$.

\subsection{Previous results on large deviations for RWRE}

Recall that a sequence $\left(Q_n\right)_{n\geq1}$ of probability measures on a topological space $\mathbb{X}$ is said to satisfy the \textit{large deviation principle} (LDP) with a 
rate function $I:\mathbb{X}\to\mathbb{R}_+\cup\{\infty\}$ if $I$ is  
lower semicontinuous and for any measurable set $G$, $$-\inf_{x\in G^o}I(x)\leq\liminf_{n\to\infty}\frac{1}{n}\log Q_n(G)\leq\limsup_{n\to\infty}\frac{1}{n}\log Q_n(G)\leq-\inf_{x\in\bar{G}}I(x).$$ Here, $G^o$ is the interior of $G$, and $\bar{G}$ its closure. See \cite{DemboZeitouniBook} for general background regarding large deviations.

We will focus on the following large deviation principles
for walks in uniformly elliptic environments.
\begin{theorem}[Quenched LDP]\label{qLDPgen}
For $\mathbb{P}$-a.e.\ $\omega$, $\left(P_o^\omega\left(\frac{X_n}{n}\in\cdot\,\right)\right)_{n\geq1}$ satisfies the LDP with a deterministic and convex rate function $I_q$.
\end{theorem}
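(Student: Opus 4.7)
The plan is to follow the classical subadditive approach: first produce $I_q$ as a deterministic limit on a countable dense set of velocities, then extend by lower semicontinuous convex closure, and finally match upper and lower LDP bounds. The same scheme covers both the space-only and space-time cases since (\ref{cakmaeminabi}) guarantees an i.i.d.\ environment in both.

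For the subadditivity step, set $h_n(\omega,x):=-\log P_o^\omega(X_n=x)$. The Markov property combined with the i.i.d.\ structure of the environment yields
$$P_o^\omega(X_{n+m}=x+y)\geq P_o^\omega(X_n=x)\,P_o^{T_x\omega}(X_m=y),$$
so $h_{n+m}(\omega,x+y)\leq h_n(\omega,x)+h_m(T_x\omega,y)$. For each rational $\xi$ in the interior of the convex hull of $\mathcal{R}$, I would choose integer approximants $x_n$ to $n\xi$ that are reachable from the origin in exactly $n$ steps---padding with short excursions of $O(1)$ cost (permitted by uniform ellipticity) whenever parity or connectivity obstructions arise---and apply Kingman's subadditive ergodic theorem to the stationary subadditive process $n\mapsto h_n(\omega,x_n)$. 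This produces an a.s.\ limit $I_q(\xi)$, which Kolmogorov's zero-one law applied to the i.i.d.\ field $(\omega_x)_{x\in\mathbb{Z}^d}$ forces to be deterministic.

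To pass from these pointwise limits to the LDP, I would first establish convexity of $I_q$ on the dense set of rationals by applying the subadditive inequality with two different targets, and then define $I_q$ globally as the lower semicontinuous convex envelope, set to $+\infty$ outside the convex hull of $\mathcal{R}$. For the LDP upper bound on a compact set $K$ of velocities, I would cover $K$ by finitely many small $\delta$-balls around rational directions $\xi_i$, bound $P_o^\omega(X_n/n\in B(\xi_i,\delta))$ by the polynomial-in-$n$ number of admissible lattice targets in the ball times $\exp\bigl(-n\inf_{B(\xi_i,\delta)}I_q+o(n)\bigr)$, and let $\delta\downarrow 0$. The matching lower bound follows by iterating the subadditive inequality along a narrow tube around the straight line from $o$ to $\lfloor n\xi\rfloor$.

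The main obstacle is the interchange of limits: Kingman's theorem gives an a.s.\ statement for each fixed rational $\xi$ off its own $\mathbb{P}$-null set, and to obtain a \emph{single} full-measure $\omega$-set on which the complete LDP holds one must intersect countably many null sets over a dense rational grid and then interpolate in $\xi$ via convexity and lower semicontinuity. Uniform ellipticity is the essential ingredient throughout: it supports the path-repair step in the subadditive argument, it provides the deterministic bound $h_n(\omega,x)\leq -n\log\kappa$ on reachable $x$, and it makes the tube lower bound insensitive to the detailed environment along any prescribed trajectory.
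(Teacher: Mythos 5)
The paper itself does not prove Theorem \ref{qLDPgen}: it is quoted as background, with the space-only case attributed to Greven--den Hollander, Zerner, Varadhan, Rosenbluth and Rassoul-Agha--Sepp\"al\"ainen, and the space-time case to Rassoul-Agha--Sepp\"al\"ainen. Your subadditive route --- Kingman's theorem applied to $-\log P_o^\omega(X_n=x_n)$ along rational velocities, then lower semicontinuous convex regularization, a covering argument for the upper bound and point-to-point estimates for the lower bound --- is a legitimate and essentially classical alternative in the i.i.d., uniformly elliptic, bounded-step setting of this paper. It is close in spirit to Zerner's argument (who worked with passage-time probabilities and needed a nestling hypothesis) and to the standard textbook treatment, whereas the references underlying the general statement (Varadhan, Rosenbluth, Rassoul-Agha--Sepp\"al\"ainen) treat stationary ergodic environments by quite different means (the environment viewed from the particle, variational/cocycle formulas). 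What your approach buys is elementarity in the i.i.d.\ case; what it gives up is generality and any variational description of $I_q$.

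Two points in your sketch need repair or genuine care. First, deterministicity: the limit of $-\frac1n\log P_o^\omega(X_n=x_n)$ is not obviously a tail function of $(\omega_x)_{x\in\mathbb{Z}^d}$ --- for a space-only walk a single site near the origin may be visited order $n$ times, so changing that site can a priori change $\log P_o^\omega(X_n=x_n)$ by order $n$ --- hence Kolmogorov's zero-one law does not apply as stated. The correct mechanism is already inside your framework: for rational $\xi$ with $q\xi\in\mathbb{Z}^d$, the two-parameter family $f_{j,k}(\omega):=-\log P_o^{T_{jq\xi}\omega}\bigl(X_{(k-j)q}=(k-j)q\xi\bigr)$ is subadditive and stationary under the shift $T_{q\xi}$, which is ergodic for the product measure, so Kingman's theorem itself yields an a.s.\ constant limit. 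Second, the covering upper bound requires a bound that is uniform over \emph{all} lattice targets $x$ with $x/n$ in a small ball, not just over your chosen approximating sequences; this is where the ellipticity path-surgery estimate $P_o^\omega(X_{n'}=x')\geq \kappa^{|x-x'|_1+O(1)}P_o^\omega(X_n=x)$ with $n'-n=|x-x'|_1+O(1)$ must be used to compare an arbitrary target with a nearby rational one (and, similarly, to obtain continuity of $I_q$ up to the boundary of the convex hull of $\mathcal{R}$, so the lower bound is not lost for open sets meeting the boundary). You do flag the null-set intersection and the role of ellipticity, and these steps are standard to execute, so I read your proposal as a correct outline of a known alternative proof rather than a complete argument.
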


\begin{theorem}[Averaged LDP]\label{aLDPgen}
$\left(P_o\left(\frac{X_n}{n}\in\cdot\,\right)\right)_{n\geq1}$ satisfies the LDP with a convex rate function $I_a$.
\end{theorem}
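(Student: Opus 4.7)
The plan is to treat the two settings separately: the space-time case degenerates into a classical Cramér problem, while the space-only case requires genuine large-deviation machinery for dependent increments. When $\mathcal{R}=\mathcal{R}_{st}$, distinct time steps correspond to distinct sites of $\mathbb{Z}^d$, so under (\ref{cakmaeminabi}) the increments $(X_{n+1}-X_n)$ are i.i.d.\ under $P_o$, with common law $\bar\pi(z):=\mathbb{E}[\pi(0,z)]$ on $\mathcal{R}_{st}$. Cramér's theorem on $\mathbb{R}^d$ then yields the LDP for $X_n/n$ with the convex rate function
$$I_a(\xi) = \sup_{\lambda\in\mathbb{R}^d}\Bigl\{\langle\lambda,\xi\rangle-\log\sum_{z\in\mathcal{R}_{st}}\bar\pi(z)e^{\langle\lambda,z\rangle}\Bigr\}.$$

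For the space-only case I would follow Varadhan's approach. Since $|X_n|\leq n$, the laws of $X_n/n$ are supported on a fixed compact set, so exponential tightness is automatic. The strategy is to establish existence of
$$\Lambda(\lambda):=\lim_{n\to\infty}\frac{1}{n}\log E_o\!\left[e^{\langle\lambda,X_n\rangle}\right],$$
deduce convexity directly from this formula, and obtain the LDP by convex duality with $I_a$ the Legendre transform of $\Lambda$. Existence proceeds through approximate super-multiplicativity of restricted partition functions, where one conditions the walk to stay within a cone pointing in some direction $\hat u$ and widening linearly in time. On the event that two successive path segments both lie in such cones, the lattice sites they visit are disjoint, so (\ref{cakmaeminabi}) makes the two environment restrictions independent, yielding $\phi_{m+n}^L(\lambda)\geq\phi_m^L(\lambda)\phi_n^L(\lambda)$ for the restricted analogue $\phi_n^L$ of $E_o[e^{\langle\lambda,X_n\rangle}]$. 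Fekete's lemma then produces the limit, and uniform ellipticity lets one recover the unrestricted $\Lambda$ by letting the cone radius grow slowly with $n$.

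The main obstacle is this decoupling step: one must control how much exponential mass is lost by restricting the walk to a cone. In the non-nestling case this is clean, because regeneration times exist and split the path into i.i.d.\ pieces with good exponential tail bounds, reducing everything to an elementary subadditivity. In the general (possibly nestling) case one must instead \emph{splice} the path at intervals of length $L_n\to\infty$, paying a bounded factor per splice via uniform ellipticity; the resulting log-cost is $O(n/L_n)=o(n)$, so the restricted and full limits agree. Combining these ingredients with the Gärtner-Ellis upper bound and a tilted-measure lower bound (where the tilt is chosen to realise a prescribed target velocity $\xi$) then completes the proof.
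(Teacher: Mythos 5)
Your space-time half coincides with the paper's argument: under (\ref{space-time}) the walk never revisits a site, so (\ref{cakmaeminabi}) makes the increments i.i.d.\ under $P_o$ with common law $q(z)=\mathbb{E}[\pi(0,z)]$, and the LDP is exactly Cram\'er's theorem; nothing more is needed there. For the space-only half, however, the paper does not prove the statement at all --- it is quoted from Varadhan \cite{Varadhan03} (and \cite{CGZ00} for $d=1$), whose proof is an entropy/variational analysis of the walk together with its history, \emph{not} the moment-generating-function route you propose, and this is not an accident.

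The decisive gap is your final step. A G\"artner--Ellis upper bound plus a tilted-measure lower bound yields the lower bound only at exposed points of $\Lambda_a^*$ (equivalently, it needs essential smoothness of $\Lambda_a$), and for space-only RWRE this hypothesis genuinely fails: $I_a$ has linear pieces --- for nestling walks $I_a$ vanishes on the segment joining $0$ to $\xi_o$, and Theorem \ref{QequalsAnn}(b)(iv) in this very paper records $I_a(t\xi)=tI_a(\xi)$ along segments --- so $\Lambda_a$ is not differentiable and velocities interior to such flat pieces are not exposed. No exponential tilt realizes those velocities; the lower bound there requires mixed strategies (e.g.\ a trapping/slowdown phase concatenated with a ballistic phase), which is precisely the hard part of the known proofs and is absent from your sketch. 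A second, related problem is the decoupling step itself: requiring consecutive blocks to occupy disjoint cone-shaped regions so that (\ref{cakmaeminabi}) factorizes forces linear progress in a fixed direction, and this constraint can carry its own exponential cost --- already at $\lambda=0$ for a nestling walk with zero velocity the restricted quantity decays exponentially while $E_o[\mathrm{e}^{\langle 0,X_n\rangle}]=1$. Uniform-ellipticity splicing at cost $O(n/L_n)$ only relocates the walk; it does not prevent the next block from re-entering the range of the previous ones, and suppressing that re-entry is exactly the exponentially costly event. So neither the existence and identification of $\Lambda_a$ by this route nor the full LDP lower bound is established; the space-only case cannot be reduced to Cram\'er-type tilting and requires the machinery of \cite{Varadhan03} (or \cite{CGZ00} in $d=1$).
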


There are many works on large deviations for space-only RWRE. We briefly mention them in chronological order. Greven and den Hollander \cite{GdH94} prove Theorem \ref{qLDPgen} for walks on $\mathbb{Z}$ under the i.i.d.\ environment assumption. They provide a formula for $I_q$ and show that its graph typically has flat pieces. Zerner \cite{Zerner98} establishes Theorem \ref{qLDPgen} for nestling walks on $\mathbb{Z}^d$ in i.i.d.\ environments. Comets, Gantert and Zeitouni \cite{CGZ00} generalize the result of \cite{GdH94} to walks on $\mathbb{Z}$ in stationary and ergodic environments. Also, they prove Theorem \ref{aLDPgen} for walks on $\mathbb{Z}$ in i.i.d.\ environments and give a formula that links $I_a$ to $I_q$. Varadhan \cite{Varadhan03} generalizes Zerner's result to stationary and ergodic environments without any nestling assumption. He also proves Theorem \ref{aLDPgen} for walks on $\mathbb{Z}^d$ in i.i.d.\ environments and gives a variational formula for $I_a$. Rassoul-Agha \cite{FirasLDP04} generalizes the latter result of \cite{Varadhan03} to certain mixing environments. Rosenbluth \cite{RosenbluthThesis} gives an alternative proof of Theorem \ref{qLDPgen} for walks on $\mathbb{Z}^d$ in stationary and ergodic environments, and provides a variational formula for $I_q$. Yilmaz \cite{YilmazQuenched} generalizes the result of \cite{RosenbluthThesis} to a so-called level-2 LDP. Berger \cite{BergerZeroSet}, Peterson and Zeitouni \cite{JonOferLDP08}, and Yilmaz \cite{YilmazAveraged} obtain certain qualitative properties of $I_a$. Rassoul-Agha and Sepp\"al\"ainen \cite{RAS} generalize the result of \cite{RosenbluthThesis} to a so-called level-3 LDP.

In the case of space-time RWRE, Rassoul-Agha and Sepp\"al\"ainen \cite{FirasTimoSpaceTime} prove Theorem \ref{qLDPgen} by adapting the quenched argument in \cite{Varadhan03}. Theorem \ref{aLDPgen} does not require any work. 
Indeed,
Assumption (\ref{space-time}) implies that the walk under $P_o$ is a sum of i.i.d.\ increments. The common distribution of these increments is $(q(z))_{z\in\mathcal{R}}$ where $q(z):=\mathbb{E}[\pi(0,z)]$ for every $z\in\mathcal{R}$. 
Therefore, Theorem \ref{aLDPgen} 
in the space-time setup
is simply Cram\'er's theorem, c.f.\ \cite{DemboZeitouniBook}.

In addition to the works mentioned in the last two paragraphs, there are two more results on large deviations for RWRE that are relevant to this paper. 
We state them in detail.

\begin{theorem}[Yilmaz \cite{YilmazSpaceTime}]\label{QequalsAst}
Assume (\ref{cakmaeminabi}) and (\ref{space-time}). If $d\geq3+1$, then $I_q=I_a$ on a set $\mathcal{A}_{st}\times\{e_d\}$ containing the LLN velocity $\xi_o$, where $\mathcal{A}_{st}$ is an open subset of $\mathbb{R}^{d-1}$.
\end{theorem}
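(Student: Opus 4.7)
The plan is to translate the statement, via Legendre duality, into a comparison of logarithmic moment generating functions. Under assumption (\ref{space-time}), the walk $(X_n)$ under $P_o$ is a sum of i.i.d.\ increments with common law $q(\cdot) := \mathbb{E}[\pi(0,\cdot)]$ on $\mathcal{R}_{st}$, so Cram\'er's theorem identifies $I_a$ at $(\xi,e_d)$ with the Legendre transform of
\[
\Lambda_a(\theta) := \log \sum_{z\in\mathcal{R}_{st}} q(z)\, e^{\langle\theta,z\rangle}.
\]
On the quenched side, a standard subadditivity argument provides the deterministic a.s.\ limit $\Lambda_q(\theta) := \lim_n \frac1n\log E_o^\omega[e^{\langle\theta,X_n\rangle}]$, with $\Lambda_q \leq \Lambda_a$ by Jensen. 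Since $I_q$ is convex (Theorem \ref{qLDPgen}), it suffices to produce an open neighborhood $\Theta_0$ of $0\in\mathbb{R}^d$ on which $\Lambda_q = \Lambda_a$: Legendre duality then delivers an open set $\mathcal{A}_{st}$ of spatial velocities, containing the spatial part of $\xi_o = \nabla\Lambda_a(0)$, on which $I_q = I_a$.

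The main step would be a second moment bound on the normalized partition function
\[
W_n(\theta,\omega) := E_o^\omega\!\left[\exp\!\left\{\langle\theta,X_n\rangle - n\Lambda_a(\theta)\right\}\right],
\]
which satisfies $\mathbb{E}[W_n(\theta,\cdot)] = 1$. Expanding $\mathbb{E}[W_n^2]$ by introducing two walks $X^{(1)}, X^{(2)}$ that are conditionally independent given the environment, assumption (\ref{space-time}) prevents either walk from visiting any space-time site twice. Consequently, after integrating out $\omega$, the quenched transition kernels of $X^{(1)}$ and $X^{(2)}$ decouple into an i.i.d.\ product at every time-slice $k$ with $X_k^{(1)} \neq X_k^{(2)}$, and pick up a bounded correction factor otherwise. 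After a $\theta$-tilt turning each marginal into an i.i.d.\ walk with law $\hat q_\theta(z) := q(z)\, e^{\langle\theta,z\rangle - \Lambda_a(\theta)}$, the second moment becomes an exponential moment of the collision number $\#\{0\leq k\leq n : D_k = 0\}$ of the difference walk $D_k := X_k^{(1)} - X_k^{(2)}$, which lives on the spatial sublattice $\mathbb{Z}^{d-1}$.

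This is where the dimension hypothesis enters, and where I expect the principal obstacle to lie: the required uniform $L^2$ bound hinges on $D_k$ being transient. For $d\geq 3+1$, so that $d-1\geq 3$, the difference walk is a genuinely $(d-1)$-dimensional symmetric random walk; by Chung--Fuchs it is transient, its expected number of returns to the origin is finite, and a continuity-in-$\theta$ estimate upgrades this to $\sup_n \mathbb{E}[W_n(\theta,\cdot)^2] < \infty$ uniformly on some open neighborhood $\Theta_0$ of the origin.

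A Paley--Zygmund estimate then forces $W_n(\theta,\cdot)$ to stay bounded away from zero with positive probability uniformly in $n$, and the Kolmogorov zero-one law (or the martingale structure of $W_n$ familiar from directed polymers) upgrades this to $\liminf_n W_n(\theta,\omega) > 0$ $\mathbb{P}$-almost surely. Taking $\frac1n\log$ and passing to the limit yields $\Lambda_q(\theta) \geq \Lambda_a(\theta)$ on $\Theta_0$, hence equality, which via Legendre duality gives $I_q = I_a$ on the desired open set $\mathcal{A}_{st}\times\{e_d\}$. The recurrence of $D_k$ in the low-dimensional regimes $d-1\in\{1,2\}$ is exactly what causes this method to break down, consistent with the contrasting behavior announced in the abstract for $d=1+1$ and $d=2+1$.
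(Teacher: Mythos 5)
This paper never proves Theorem \ref{QequalsAst}: it is imported from \cite{YilmazSpaceTime}, so your argument can only be measured against that reference rather than against a proof given here. Your outline is the classical weak-disorder (second-moment) argument for directed polymers transplanted to the space-time partition function $W_n(\theta,\cdot)$, and it is sound in outline: under (\ref{space-time}) each time slice of the environment is fresh, so $W_n$ is a mean-one martingale; the spatial difference of two conditionally independent $\theta$-tilted walks is a genuinely $(d-1)$-dimensional symmetric walk, transient when $d-1\geq3$; an $L^2$ bound then yields a deterministic a.s.\ limit $\lim_n\frac1n\log W_n=0$, i.e.\ $\Lambda_q=\Lambda_a$ on a neighborhood $\Theta_0$ of $0$, and the duality step closes because equality of the convex functions $\Lambda_q$ and $\Lambda_a$ on an open set forces $\nabla\Lambda_q=\nabla\Lambda_a$ there, whence $I_q(\nabla\Lambda_a(\theta))\leq\langle\theta,\nabla\Lambda_a(\theta)\rangle-\Lambda_q(\theta)=I_a(\nabla\Lambda_a(\theta))$; this tangency remark deserves a line in your write-up, since a naive global conjugation of $\Lambda_q\leq\Lambda_a$ only returns the trivial inequality $I_q\geq I_a$. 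The mechanism (small tilt plus transience of the difference walk) is the same one underlying the cited proof, though \cite{YilmazSpaceTime} organizes it around the environment process and variational formulas for the rate functions rather than around a.s.\ positivity of the martingale limit; your route is the more elementary formulation.

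One step needs to be stated more carefully. After integrating out the environment, a collision at time $k$ contributes the step-dependent factor $\mathbb{E}[\pi(0,z)\pi(0,z')]/(q(z)q(z'))$ at the colliding steps $z,z'$; this is bounded by ellipticity but is \emph{not} close to $1$, so ``bounded correction factor'' together with ``finite expected number of returns'' does not by itself give $\sup_n\mathbb{E}[W_n^2]<\infty$: a geometric collision count has finite exponential moments only when the per-collision weight beats the return probability of the difference walk. What saves the argument is the quantitative form of your continuity-in-$\theta$ remark: the annealed per-collision weight $\Phi(\theta):=\mathbb{E}\bigl[\bigl(E_o^\omega[\mathrm{e}^{\langle\theta,X_1\rangle}]\bigr)^2\bigr]/\phi(\theta)^2$ equals $1$ at $\theta=0$ and is continuous, while the escape probability of the difference walk is bounded away from $0$ uniformly for small $\theta$ precisely because $d-1\geq3$; hence the product is $<1$ on some $\Theta_0$, and a renewal argument at successive collision times (minding that the collision factor depends on the colliding steps and hence correlates with the next position of the difference walk) gives the uniform $L^2$ bound. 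Note that $\Phi(\theta)>1$ for every $\theta$ with nonzero spatial component, by the same positive-correlation computation as in (\ref{FKG}) and Lemma \ref{gereklilemmaiki}; that excess over $1$ is exactly what the present paper exploits in the opposite direction in $d=1+1$ and $2+1$, so the dichotomy you point to at the end is the right one. Finally, the zero-one law you invoke requires checking that $\{W_\infty=0\}$ is a tail event of the slice filtration (standard for the directed structure), but you can bypass it: $\frac1n\log W_n$ already has a deterministic a.s.\ limit $\Lambda_q(\theta)-\Lambda_a(\theta)$, which cannot be negative because $W_n\to W_\infty$ in $L^2$ with $\mathbb{E}[W_\infty]=1$.
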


\begin{theorem}[Yilmaz \cite{YilmazQequalsA}]\label{QequalsAnn}
Assume (\ref{cakmaeminabi}), (\ref{space-only}), $d\geq4$, and that Sznitman's (\textbf{T}) condition holds for some $\hat{u}\in\mathcal{S}^{d-1}$.
\begin{itemize}
\item[(a)] If the walk is non-nestling, then $I_q=I_a$ on an open set $\mathcal{A}_{so}$ containing the LLN velocity $\xi_o$.
\item[(b)] If the walk is nestling, then
\begin{itemize}
\item[(i)] $I_q=I_a$ on an open set $\mathcal{A}_{so}^+$,
\item[(ii)] there exists a $(d-1)$-dimensional smooth surface patch $\mathcal{A}_{so}^b$ such that $\xi_o\in\mathcal{A}_{so}^b\subset\partial\mathcal{A}_{so}^+$,
\item[(iii)] the unit vector $\eta_o$ normal to $\mathcal{A}_{so}^b$ (and pointing inside $\mathcal{A}_{so}^+$) at $\xi_o$ satisfies $\langle\eta_o,\xi_o\rangle>0$, and
\item[(iv)] $I_q(t\xi)=tI_q(\xi)=tI_a(\xi)=I_a(t\xi)$ for every $\xi\in\mathcal{A}_{so}^b$ and $t\in[0,1]$.
\end{itemize}
\end{itemize}
\end{theorem}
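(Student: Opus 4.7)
The plan is to follow the strategy that proves the analogous space-time statement of Theorem \ref{QequalsAst}: derive Cram\'er-style variational formulas for $I_a$ and $I_q$ over regeneration slabs, and show by a second-moment argument that the two formulas coincide for velocities close to $\xi_o$. Since $I_a \leq I_q$ is automatic, the task is to produce, for each tilt $\lambda$ in a neighborhood of the origin, a $\mathbb{P}$-a.s.\ cocycle witnessing that the averaged tilt can be realized quenched. The relevant averaged object is the log moment generating function
$$\Lambda(\lambda) := \lim_{n\to\infty} \frac{1}{n}\log E_o\bigl[e^{\langle \lambda, X_{\tau_n}\rangle}\bigr],$$
which under non-nestling is real-analytic on a neighborhood of $0$ (all exponential moments of $\tau_1$ are finite by condition \textbf{T}) and satisfies $\nabla \Lambda(0) = \xi_o$.

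The centerpiece of the argument is the normalized quenched partition function
$$W_n(\lambda, \omega) := \frac{E_o^\omega\bigl[e^{\langle \lambda, X_{\tau_n}\rangle}\bigr]}{E_o\bigl[e^{\langle \lambda, X_{\tau_n}\rangle}\bigr]},$$
which is a positive mean-one martingale with respect to the environment filtration indexed by the slabs $\{x \in \mathbb{Z}^d : \langle x, \hat{u}\rangle \leq k\}$. I would show that $\sup_n \mathbb{E}[W_n^2] < \infty$ for $\lambda$ in some neighborhood of $0$; this gives $L^2$ convergence of $W_n$ to a strictly positive limit $W_\infty(\lambda, \omega)$, from which one extracts the desired cocycle by taking logarithmic increments across regeneration slabs and invoking the ergodic theorem. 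Passing to Legendre transforms yields $I_q(\nabla \Lambda(\lambda)) = I_a(\nabla \Lambda(\lambda))$ on the corresponding neighborhood of $\xi_o$, proving (a). For (b), the same second-moment estimate succeeds only for tilts with $\Lambda(\lambda) > 0$, yielding the open set $\mathcal{A}_{so}^+$ of (b)(i); the boundary patch $\mathcal{A}_{so}^b$ should be the image under $\nabla \Lambda$ of the smooth hypersurface $\{\Lambda = 0\}$. The statements (b)(ii)--(iv) then follow from the implicit function theorem applied at a point where $\nabla \Lambda$ is non-degenerate, convexity of $\Lambda$ combined with $\Lambda(0)=0$ and $\nabla\Lambda(0) = \xi_o \neq 0$, and the fact that the Legendre transform of $\Lambda$ is positively homogeneous of degree one along rays to points of $\{\Lambda = 0\}$.

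The main obstacle is the $L^2$ bound. Expanding $\mathbb{E}[W_n^2]$ as an expectation over two independent walks $X^1$ and $X^2$ in a common environment produces a sum over common visits of factors of the form $\mathbb{E}[\pi(0,\cdot)^2]/\mathbb{E}[\pi(0,\cdot)]^2$. The resulting bound is controlled by the expected number of intersections of $X^1$ and $X^2$, which is in turn governed by the Green function of the difference walk $X^1 - X^2$. Ballisticity produces a transverse drift; the difference walk therefore behaves like a ballistic random walk with effective transverse dimension $d-1$, and the Green function is summable whenever $d - 1 \geq 3$, i.e.\ exactly when $d \geq 4$. This is where the dimension restriction of the theorem enters, and it is also why the complementary results of the present paper become possible in $d \leq 3$. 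Making this precise requires setting up joint regeneration times for the two walks, obtaining uniform control of the $L^2$ estimate over a neighborhood of $\lambda$, and handling the asymmetry introduced when $\hat{u}$ is an arbitrary unit vector rather than a lattice direction.
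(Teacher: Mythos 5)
First, a point of comparison: the present paper does not prove this theorem at all — it is imported verbatim from Yilmaz \cite{YilmazQequalsA} — so your proposal can only be measured against the strategy of that reference. At that level your route is the right one: decompose along regeneration times, control a normalized quenched-to-averaged ratio by a second-moment computation in which two independent walks in a common environment contribute through their common visits, observe that the intersection count is governed by a ballistic difference walk with effective transverse dimension $d-1\geq3$ (which is exactly where $d\geq4$ enters, and why the present paper can go the other way for $d\leq3$), and in the nestling case restrict to tilts with positive annealed free energy so that the boundary patch $\mathcal{A}_{so}^b$ arises from $\{\Lambda_a=0\}$, with the linear behavior (iv) along rays to that boundary. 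This is also the mechanism behind the space-time Theorem \ref{QequalsAst}.

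Two steps, however, would fail as written. Your ``centerpiece'' claim that $W_n(\lambda,\omega)=E_o^\omega[\mathrm{e}^{\langle\lambda,X_{\tau_n}\rangle}]/E_o[\mathrm{e}^{\langle\lambda,X_{\tau_n}\rangle}]$ is a mean-one martingale for the filtration of environment slabs is false for space-only RWRE: $\tau_n$ is not a stopping time (the regeneration event looks into the entire future of the path), the walk backtracks below intermediate levels before regenerating, so the quenched weight is not measurable with respect to the environment in any half-space indexed by $n$, and even the annealed factorization $E_o[\mathrm{e}^{\langle\lambda,X_{\tau_{n+1}}\rangle}]=E_o[\mathrm{e}^{\langle\lambda,X_{\tau_n}\rangle}]\,E_o[\mathrm{e}^{\langle\lambda,X_{\tau_1}\rangle}]$ holds only after conditioning on $\{\beta=\infty\}$. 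This is precisely the ``dependence of the transition probabilities on the environment'' obstruction the present paper emphasizes; the argument must be run directly on the i.i.d.\ regeneration blocks conditioned on $\{\beta=\infty\}$ rather than via $L^2$ martingale convergence and a cocycle limit. Second, your normalization is off: with $\Lambda(\lambda)=\lim_n n^{-1}\log E_o[\mathrm{e}^{\langle\lambda,X_{\tau_n}\rangle}]$ one has $\nabla\Lambda(0)=E_o[X_{\tau_1}\mid\beta=\infty]=\xi_o\,E_o[\tau_1\mid\beta=\infty]\neq\xi_o$, and the Legendre transform of $\Lambda$ is not $I_a$, which is a rate function in real time rather than in the number of regenerations. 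One must penalize time, i.e.\ define $\Lambda_a(\theta)$ implicitly through $E_o[\exp\{\langle\theta,X_{\tau_1}\rangle-\Lambda_a(\theta)\tau_1\}\mid\beta=\infty]=1$ as in (\ref{cevikuff}) and Lemma \ref{zikkim}, and then pass from regeneration counts back to real time by an interpolation of the type used in Lemma \ref{cevat} before taking convex duals. Relatedly, exponential moments of $\tau_1$ come from non-nestling, not from (\textbf{T}) alone; in the nestling case they are unavailable, which is exactly why the construction is confined to $\{\Lambda_a>0\}$, and the quenched half of (b)(iv) needs $I_q(0)=0$ for nestling walks together with convexity and the boundary equality — positive homogeneity of the averaged Legendre transform alone does not give it.
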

It is worthwhile to emphasize that the equality $I_q=I_a$ does not extend, in the setup of Theorems \ref{QequalsAst} and \ref{QequalsAnn}, to the whole space. Indeed, for any $d\geq1$,
\begin{equation}\label{husnukon}
\mbox{$I_a<I_q$ at the extremal points of the domain of $I_a$.}
\end{equation}
By continuity, this inequality holds also at some interior points. See Proposition 4 of \cite{YilmazQequalsA} for details.

\subsection{Our results}

For space-time RWRE, it is natural to ask whether Theorem \ref{QequalsAst} can be generalized to $d\geq1+1$ or $2+1$. The answer turns out to be no.
\begin{theorem}\label{QneqAst}
Assume (\ref{cakmaeminabi}) and (\ref{space-time}). If $d=1+1$, then $I_q(\xi)=I_a(\xi)<\infty$ if and only if $\xi=\xi_o$, the LLN velocity.
\end{theorem}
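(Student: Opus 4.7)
The plan is to reduce the desired strict inequality of rate functions to an inequality of log-moment generating functions via convex duality, and to establish the latter by a fractional-moment argument with a change of measure on the environment, exploiting the recurrence of $1$-dimensional walks.

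First, set up the duality. Writing $X_n = (S_n, n)$ in the $1+1$ space-time case, $S_n$ is under $P_o$ a sum of i.i.d.\ $\pm 1$ increments with mean $2p - 1$, where $p := \mathbb{E}[\pi(0, (1, 1))]$, so $\xi_o = (2p - 1, 1)$ and Cram\'er's theorem identifies $I_a$ with Legendre dual $\Lambda_a(h) = \log(p e^h + (1 - p) e^{-h})$. On the quenched side, a standard subadditivity argument (as in \cite{FirasTimoSpaceTime}) produces a $\mathbb{P}$-a.s.\ deterministic convex limit $\Lambda_q(h) := \lim_n n^{-1} \log E_o^\omega[e^{h S_n}]$; since $I_q$ is convex by Theorem \ref{qLDPgen}, $I_q = \Lambda_q^*$, and Jensen gives $\Lambda_q \leq \Lambda_a$. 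Thus it suffices to prove $\Lambda_q(h) < \Lambda_a(h)$ for every $h \neq 0$; since $\Lambda_a$ is smooth and strictly convex with $\Lambda_a'(0)$ giving the first coordinate of $\xi_o$, Legendre duality then yields $I_q((v, 1)) > I_a((v, 1))$ for all $v \in (-1, 1) \setminus \{2p - 1\}$. The extreme cases $v = \pm 1$ are handled directly: along the forced straight-line path the SLLN gives quenched cost $-\mathbb{E}[\log \pi(0, (\pm 1, 1))]$, versus annealed cost $-\log q(\pm 1, 1)$, differing strictly by Jensen.

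Define the normalized partition function
\[
W_n^h := \frac{E_o^\omega[e^{h S_n}]}{e^{n \Lambda_a(h)}}, \qquad \mathbb{E}[W_n^h] = 1,
\]
so that $\Lambda_q(h) - \Lambda_a(h) = \lim_n n^{-1} \log W_n^h$ a.s. The key claim is that for some $\theta \in (0, 1)$,
\[
\limsup_{n \to \infty} \frac{1}{n} \log \mathbb{E}\bigl[(W_n^h)^\theta\bigr] < 0,
\]
from which Markov's inequality and Borel--Cantelli yield $\limsup_n n^{-1} \log W_n^h < 0$ a.s., proving $\Lambda_q(h) < \Lambda_a(h)$. To obtain this fractional-moment decay, I would use a change of measure on $\mathbb{P}$ in the spirit of the directed-polymer literature (cf.\ Lacoin's proof of strong disorder at all temperatures in $1 + 1$): pick a density $g_n$, a function of the environment on a block commensurate with the typical range of the walk at time $n$, that suppresses the local drift along paths of a reference walk of speed $\Lambda_a'(h)$. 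H\"older's inequality decouples $\mathbb{E}[(W_n^h)^\theta]$ into a subexponential cost factor involving moments of $g_n$ and a tilted first moment of $W_n^h$ which inherits an exponential discount governed by the expected intersection local time of two independent $1$-dimensional walks with common drift $\Lambda_a'(h)$. In $d = 1 + 1$ this local time is of order $\sqrt n$ for every $h \neq 0$, yielding the required net exponential gain after tuning $\theta$ and the strength of $g_n$.

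The main obstacle is the precise construction of $g_n$ and the verification that the two factors balance: the tilt must be weak enough for the H\"older cost to stay subexponential, yet strong enough for the tilted first moment to decay exponentially. The available budget is exactly the recurrence of the difference of two $1$-dimensional walks with common drift---a recurrence that disappears in $d \geq 3 + 1$ (Theorem \ref{QequalsAst}) and is only marginal in $d = 2 + 1$ (producing the weaker punctured-neighborhood result stated in the abstract).
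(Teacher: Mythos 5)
Your reduction is the same as the paper's: by Cram\'er/Varadhan and convex duality it suffices to show $\Lambda_q(\theta)<\Lambda_a(\theta)$ for every $\theta\notin sp\{e_2\}$, the boundary velocities $(\pm1,1)$ being settled by Jensen (this is (\ref{husnukon}) in the paper), and the route to that inequality through $\limsup_N N^{-1}\log\mathbb{E}[(W_N)^\alpha]<0$ is also the paper's (compare (\ref{fracmom}) and Lemma \ref{fracmomlemma}). The problem is that everything after that point --- which is the actual content of the theorem --- is deferred: you yourself flag ``the precise construction of $g_n$ and the verification that the two factors balance'' as the main obstacle, and that construction and verification are exactly what the paper has to supply.

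Moreover, the mechanism you sketch for the gain is not the one that works here. You attribute the decay of the tilted first moment to the expected intersection local time of two independent walks, of order $\sqrt n$ in $1+1$; that is the two-replica/second-moment heuristic. In the argument that succeeds (Subsections \ref{tiltsubsec}--\ref{finsubsec}, following Lacoin), the tilt is a shift of strength $\delta_n=C_1^{-1/2}n^{-3/4}$ applied, through the bounded penalty $f_K(\delta_n D(B_1))$, to every site of a tube of volume $O(n^{3/2})$ around the expected trajectory, so that its cost (the second H\"older factor, controlled by the $O(1)$ variance of $\delta_nD(B_1)$) stays subexponential, while the gain is collected along a \emph{single} path: the path visits $n$ sites, and at each visited site the quenched tilted one-step weight $E_o^\omega[\exp\{\langle\theta,X_1\rangle-\log\phi(\theta)\}]$ is strictly positively correlated with the local drift $a(\theta,0)=\langle\theta,v(\omega)-\xi_o\rangle$; this is the FKG step (\ref{FKG}), giving $\mu>0$ in (\ref{eq-150310a}), so that $D(B_1)\approx\mu n\gg n^{3/4}\approx\delta_n^{-1}$ on the paths carrying the weight and the penalty $-K$ is triggered. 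This correlation step is where the hypothesis $\theta\notin sp\{e_2\}$ (equivalently $\xi\neq\xi_o$) actually enters, and it is where the RWRE-specific difficulty --- the environment sits inside the transition probabilities rather than in an exponential weight as in the polymer model --- is overcome; your sketch never addresses it. You also omit the structural devices needed to convert a single-scale estimate into exponential decay of $\mathbb{E}[(W_N)^\alpha]$ in $N$: the coarse-graining over block positions $Y$ in (\ref{asama}) (so the environment is tilted only where the path roughly is; a blanket tilt makes the H\"older cost too large), the truncation $f_K$ (so paths leaving the tube are not catastrophic), and the factorization over $m$ blocks of fixed length $n$ that reduces everything to the single-block estimate of Lemma \ref{lessthanhalf}, itself proved via Chebyshev and Donsker bounds plus the FKG input. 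As it stands, your proposal is a sound plan with the key lemma missing and the quantitative mechanism misidentified.
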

\begin{theorem}\label{QneqAstyeni}
Assume (\ref{cakmaeminabi}) and (\ref{space-time}). If $d=2+1$, then $I_a<I_q$ on a 
set $(\mathcal{G}_{st}\times\{e_3\})\setminus\{\xi_o\}$,
 where $\mathcal{G}_{st}\subset\mathbb{R}^2$ is open 
and $\mathcal{G}_{st}\times\{e_3\}$ contains $\xi_o$.
\end{theorem}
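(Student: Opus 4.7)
The plan is to translate the claimed inequality of rate functions into an inequality of their convex duals, and then establish the latter via the fractional moment method with a change of measure on the environment, leveraging recurrence of the two-dimensional spatial difference walk.

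First I would pass to log-moment generating functions. Under (\ref{space-time}) and (\ref{cakmaeminabi}) the walk under $P_o$ is a sum of i.i.d.\ increments with law $q$, so Cram\'er's theorem identifies $I_a$ as the Legendre conjugate of the real-analytic
\[
\Lambda_a(\lambda):=\log\sum_{z\in\mathcal{R}}q(z)e^{\langle\lambda,z\rangle},
\]
and $\nabla\Lambda_a(0)=\xi_o$. On the quenched side, Theorem \ref{qLDPgen} together with Varadhan's lemma furnishes a deterministic function $\Lambda_q$ with $\Lambda_q(\lambda)=\lim_n n^{-1}\log E_o^\omega[e^{\langle\lambda,X_n\rangle}]$ $\mathbb{P}$-a.s., of which $I_q$ is the Legendre conjugate. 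Jensen's inequality yields $\Lambda_q\le\Lambda_a$ everywhere. Since $\nabla\Lambda_a$ is a diffeomorphism from a neighborhood of $0\in\mathbb{R}^3$ onto a neighborhood of $\xi_o$ in $\mathbb{R}^2\times\{e_3\}$, it will suffice to exhibit an open $U\ni 0$ on which $\Lambda_q<\Lambda_a$ strictly off $0$; one can then take $\mathcal{G}_{st}$ to be the projection onto $\mathbb{R}^2$ of $\nabla\Lambda_a(U)$.

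To produce such strict inequality, fix $\lambda\ne 0$ small and set $Z_n(\omega,\lambda):=E_o^\omega[e^{\langle\lambda,X_n\rangle}]$; I would aim for a fractional-moment bound
\[
\mathbb{E}\bigl[Z_n(\omega,\lambda)^\theta\bigr]\le\exp\bigl(n\theta\Lambda_a(\lambda)-\gamma(\lambda)n\bigr)
\]
for some $\theta\in(0,1)$ and $\gamma(\lambda)>0$, which via Markov's inequality and Borel--Cantelli along a geometric subsequence forces $\Lambda_q(\lambda)\le\Lambda_a(\lambda)-\gamma(\lambda)/\theta<\Lambda_a(\lambda)$. The naive second-moment computation expresses $\mathbb{E}[Z_n^2]/(\mathbb{E}[Z_n])^2$ in terms of the intersection local time of two independent copies of the $\lambda$-tilted spatial walk on $\mathbb{Z}^2$; by recurrence of this difference walk, that local time grows only like $\log n$, which is far too weak to yield an exponential gap on its own. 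To amplify this logarithmic excess I would introduce, in the spirit of the Giacomin--Lacoin--Toninelli change-of-measure technique, a product environment law $\tilde{\mathbb{P}}$ equivalent to $\mathbb{P}$ and supported on a space-time box $B_{L,n}$ whose spatial side-length $L=L(n)$ matches the diffusive spread of the tilted walk, with local marginals mildly biased against $\lambda$. H\"older's inequality then yields
\[
\mathbb{E}[Z_n^\theta]\le\mathbb{E}\!\left[\left(\tfrac{d\mathbb{P}}{d\tilde{\mathbb{P}}}\right)^{\theta/(1-\theta)}\right]^{1-\theta}\tilde{\mathbb{E}}[Z_n]^\theta,
\]
and the task reduces to tuning the tilt strength so that the entropy factor is at most $e^{o(n)}$ while $\tilde{\mathbb{E}}[Z_n]\le e^{n\Lambda_a(\lambda)-cn}$ for some $c=c(\lambda)>0$.

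The main obstacle will be executing this calibration at the critical dimension $d=2+1$, where the second-moment gain is only logarithmic in $n$. I would expect to superpose many small local tilts over dyadic spatial (or temporal) scales, extracting an independent $\log n$ factor at each scale, and then optimize the number of scales and the tilt amplitudes so that the total entropy cost remains subexponential while the combined improvement on $\tilde{\mathbb{E}}[Z_n]$ is linear in $n$, producing $\gamma(\lambda)>0$. Continuous dependence of all of the above bounds on $\lambda$, together with a compactness argument on annular shells around $0$ in $U$, would then yield the open set $\mathcal{G}_{st}$ on which $I_a<I_q$ away from $\xi_o$.
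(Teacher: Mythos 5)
Your overall skeleton (pass to logarithmic moment generating functions, prove a fractional moment bound $\mathbb{E}[W_N^\alpha]\le e^{-\gamma N}$, conclude $\Lambda_q<\Lambda_a$ by Jensen/Borel--Cantelli and transfer to $I_a<I_q$ by duality) agrees with the paper's first step, but the two ingredients that actually carry the proof at the critical dimension $d=2+1$ are missing. First, your proposed remedy for the merely logarithmic second-moment gain --- ``superpose many small local tilts over dyadic scales, extracting an independent $\log n$ factor at each scale'' so that a single global change of measure with $e^{o(n)}$ entropy yields a gain linear in $n$ --- is not substantiated, and it is not how the known argument works. The paper (following Lacoin) first coarse-grains: $N=nm$, the partition function is decomposed over coarse trajectories $Y$ of blocks $J_{y_j}$, $x\mapsto x^\alpha$ is used subadditively, and the exponential decay comes from a \emph{constant} gain per block multiplied over $m$ blocks; within a block the change of measure is not a superposition of linear tilts but a single \emph{quadratic} (two-body) functional of the environment, $D(B_1)=\sum V((r,k),(s,l))\,a(\theta,(r,k))a(\theta,(s,l))$ with the kernel (\ref{quadtilt}) weighted by $|k-l|^{-1}$ on a diffusive cone. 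The point is the mismatch of scales this creates: $\mathrm{Var}(D(B_1))=O(n^2\log n)$ while a typical tilted path accumulates $\nu(n,X)\gtrsim n\log n$ pairs (Lemma \ref{gereklilemmabir}), so with $\delta_n=n^{-1}(\log n)^{-1/2}$ the entropy factor stays bounded per block but $\delta_n\mu^2\nu(n,X)\sim\mu^2\sqrt{\log n}\to\infty$, which is the whole source of the gain. A linear tilt, at any single scale, faces the contradiction $\delta_n\gg n^{-1}$ (for the gain) versus $\delta_n\lesssim n^{-1}$ (for the entropy), and your dyadic superposition is asserted, not shown, to escape it.

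Second, you treat the model as if it were the directed polymer, where biasing the disorder downward automatically lowers $\tilde{\mathbb{E}}[Z_n]$. In RWRE the environment enters only through the transition kernel, so the efficacy of any environment tilt rests on strict positivity of the correlation $\mu=E_o\!\left[\exp\{\langle\theta,X_1\rangle-\log\phi(\theta)\}a(\theta,0)\right]$ between the local annealed weight and the local drift projection; your proposal never identifies this quantity or explains why a ``bias against $\lambda$'' reduces the tilted expectation at all. Moreover, in $d=2+1$ this positivity is not an FKG fact (as it is for $d=1+1$, where there is a single free parameter); the paper proves it only perturbatively (Lemma \ref{gereklilemmaiki}, via a Hessian comparison and strict Cauchy--Schwarz), and this is precisely why the conclusion holds only for $\mathrm{dist}(\theta,sp\{e_3\})\in(0,\beta)$, i.e.\ only on a punctured neighborhood of $\xi_o$ --- a feature your continuity/compactness argument does not produce. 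Relatedly, your reduction is misstated: $\nabla\Lambda_a$ cannot be a diffeomorphism of a three-dimensional neighborhood of $0$ onto a subset of the plane $\mathbb{R}^2\times\{e_3\}$ (it collapses the $e_3$ direction), and one cannot have $\Lambda_q<\Lambda_a$ on all of $U\setminus\{0\}$, since $\langle\theta,X_n\rangle$ is deterministic for $\theta\in sp\{e_3\}$ and hence $\Lambda_q=\Lambda_a$ on that whole line; the correct statement is strict inequality off $sp\{e_3\}$ (and, in $d=2+1$, only near it), whose image under $\nabla\log\phi$ is the set $(\mathcal{G}_{st}\times\{e_3\})\setminus\{\xi_o\}$ of the theorem.
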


In the case of space-only RWRE on $\mathbb{Z}$, 
a consequence of Comets et al.\ \cite{CGZ00}, Proposition 5, is
 that $I_q(\xi)=I_a(\xi)<\infty$ if and only if $\xi=0$ or $I_a(\xi)=0$. In particular, Theorem \ref{QequalsAnn} 
cannot be generalized to $d\geq1$.
Our next result shows that the conclusion of Theorem \ref{QequalsAnn} is false
for a class of space-only RWRE's in dimensions $d=2,3$. 
\begin{definition}\label{gaffur}
Assume $d\geq2$, and fix a triple $p=(p^+,p^o,p^-)$ of positive real numbers such that $p^-<p^+$ and $p^++p^o+p^-=1$. For any $\epsilon>0$, a probability measure $\mathbb{P}$ on $(\Omega,\mathcal{B})$ is said to be in class $\mathcal{M}_\epsilon(d,p)$ if
\begin{itemize}
\item[(a)] (\ref{cakmaeminabi}) and (\ref{space-only}) hold,
\item[(b)] $\mathbb{P}(\pi(0,e_d)=p^+$, $\pi(0,-e_d)=p^-)=1$,
\item[(c)] $\mathbb{P}(\epsilon/2<|\pi(0,e_1)-\frac{p^o}{2(d-1)}|<\epsilon)=1$, and
\item[(d)] $\mathbb{P}$ is invariant under the rotations of $\mathbb{Z}^d$ that preserve $e_d$. (We will refer to this as \textit{isotropy}.)
\end{itemize}
\end{definition}

\begin{theorem}\label{QneqAnn}
Assume $d=2$ or $3$. Fix a triple $p=(p^+,p^o,p^-)$ as in Definition \ref{gaffur}. 
Then there exists an $\epsilon_o=\epsilon_o(p)$
such that if $\epsilon<\epsilon_o$ and
$\mathbb{P}$ is in class $\mathcal{M}_\epsilon(d,p)$, then
the quenched and the averaged rate functions $I_q$ and $I_a$ are not identically equal on any open set containing the LLN velocity $\xi_o$.
\end{theorem}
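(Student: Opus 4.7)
The plan is to argue by contradiction. Suppose $I_q \equiv I_a$ on some open neighborhood $U$ of $\xi_o$. Since $p^+>p^-$ we have $\mathrm{ess\,inf}_{\mathbb{P}}\langle v(\cdot),e_d\rangle\geq p^+-p^->0$, so the walk is non-nestling relative to $e_d$, Sznitman's condition (\textbf{T}) holds, and the regeneration times $\tau_m$ relative to $e_d$ have exponential moments. Isotropy (Definition \ref{gaffur}(d)) forces $\xi_o = v_o e_d$ for some $v_o>0$. By the variational formulas for $I_q$ and $I_a$ from \cite{RosenbluthThesis,RAS,Varadhan03,YilmazQequalsA}, the equality on $U$ translates into $\Lambda_q(\theta)=\Lambda_a(\theta)$ for $\theta$ in an open set dual to $U$, which is equivalent to
$$\frac{1}{n}\log W_n^{(\theta)}(\omega) \longrightarrow 0 \quad \mathbb{P}\text{-a.s.,} \qquad W_n^{(\theta)}(\omega) := E_o^\omega\bigl[e^{\theta\cdot X_n}\bigr]\big/\mathbb{E}\bigl[E_o^\omega\bigl[e^{\theta\cdot X_n}\bigr]\bigr].$$
The $W_n^{(\theta)}$ are mean-one non-negative ratios analogous to normalized directed polymer partition functions.

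My plan is to rule out the conclusion above by establishing \emph{very strong disorder}: $\limsup\tfrac{1}{n}\log W_n^{(\theta)}<0$ almost surely, for some $\theta$ dual to a point of $U$. I would use a fractional-moment change-of-measure argument in the spirit of the directed-polymer literature. Writing out $W_n^{(\theta)}$ via tilting yields a polymer-type expression
$$W_n^{(\theta)}(\omega) = E_o^{\mathrm{ref},\theta}\!\left[\prod_{k=0}^{n-1}\bigl(1 + \epsilon\,\psi_\theta(T_{X_k}\omega,\,X_{k+1}-X_k) + O(\epsilon^2)\bigr)\right],$$
where $P_o^{\mathrm{ref},\theta}$ is the i.i.d.\ step law obtained by $\theta$-tilting $q(z)=\mathbb{E}[\pi(0,z)]$ (still drifted in direction $e_d$ for $\theta$ near $\theta_o$, by isotropy), and $\psi_\theta(\cdot,z)$ has zero $\mathbb{P}$-mean for every $z\in\mathcal{R}_{so}$. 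For $s\in(0,1)$ fixed, I would bound $\mathbb{E}\bigl[(W_n^{(\theta)})^s\bigr]$ by a coarse-graining / soft tilt on the environment. The effective cost of the tilt is controlled by the expected overlap of two independent reference walks $X^{(1)},X^{(2)}$ in the $(d-1)$-dimensional transverse subspace, which for $d-1\in\{1,2\}$ grows like $\sqrt n$ (when $d=2$) and $\log n$ (when $d=3$). A standard Lacoin-type computation then gives $\mathbb{E}\bigl[(W_n^{(\theta)})^s\bigr]\leq C e^{-c n}$ for some $c,C>0$, and Markov's inequality together with a Borel--Cantelli argument yields $\limsup\tfrac{1}{n}\log W_n^{(\theta)}\leq -c/s<0$ almost surely, contradicting the equality $\Lambda_q(\theta)=\Lambda_a(\theta)$ and hence $I_q\equiv I_a$ on $U$.

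The main obstacle is that, unlike in the directed polymer or space-time RWRE setup (compare Theorems \ref{QneqAst} and \ref{QneqAstyeni}), the walk here can backtrack in $e_d$, so a single visited environment contributes correlated factors to multiple steps of the product above and the coarse-graining on which the fractional-moment method relies does not decompose cleanly into independent slabs. I would handle this via the regeneration decomposition relative to $e_d$: between consecutive $\tau_m$'s the walk is confined to a finite $e_d$-slab with exponentially small tails (by (\textbf{T})), the inter-regeneration increments are i.i.d., and the accumulated transverse displacements are a good proxy for a symmetric random walk in $d-1\leq 2$ dimensions, whose intersection local time drives the overlap estimate. The hypothesis $\epsilon<\epsilon_o(p)$ is used both to close the $O(\epsilon^2)$ expansion uniformly in $\theta$ near $\theta_o$ and to preserve the uniform ellipticity implicit in Definition \ref{gaffur}(c); the isotropy assumption is what aligns $\theta_o$ with $e_d$ and keeps the tilted reference walk in the perturbative regime around a pure $e_d$-drift.
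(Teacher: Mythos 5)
Your overall strategy (fractional moment plus change of measure in the spirit of Lacoin, with a regeneration decomposition relative to $e_d$ to restore independence between blocks) is indeed the paper's strategy for the reduction step. But there is a genuine gap at the heart of the argument: you assert that ``a standard Lacoin-type computation'' gives $\mathbb{E}[(W_n^{(\theta)})^s]\leq Ce^{-cn}$ once the transverse overlap grows ($\sqrt n$ for $d=2$, $\log n$ for $d=3$). That computation is not standard here, precisely because in RWRE the environment enters the \emph{transition probabilities} of the walk rather than an external exponential potential. In the polymer model, shifting the environment downward along a coarse-grained tube automatically decreases the expected partition function; in RWRE, the first-order effect of such a tilt on $\mathbb{E}[\bar W g]$ is governed by the covariance between the quenched tilted weight over a regeneration block and the accumulated local drifts $\sum_{x\in S(X,\tau_1)}a(\theta,x)$ along the block. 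One needs this covariance to be \emph{strictly positive} — this is exactly the correlation condition (\ref{theassumption}), the analogue of the FKG step (\ref{FKG}) in the space-time case — and it is neither automatic nor implied by any overlap estimate. Your proposal never identifies this condition, let alone verifies it; without it the tilt $g$ produces no gain at the relevant scale and the fractional moment does not decay. A telltale sign is that your argument, as written, would apply to every non-nestling walk in $d=2,3$, which the paper explicitly lists as an open problem.

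Verifying (\ref{theassumption}) is where the class $\mathcal{M}_\epsilon(d,p)$ and the smallness of $\epsilon$ are genuinely used, and in a much more quantitative way than ``closing the $O(\epsilon^2)$ expansion.'' The paper expands the exponential to write the correlation as $E_o[\langle\theta,X_{\tau_1}\rangle Z(\theta)\,|\,\beta=\infty]+O(|\theta|^3)$, represents the walk through auxiliary sequences $(b_i),(f_i),(U_i)$ so that only the rare steps reading the environment (probability of order $\epsilon$) contribute, and splits according to the number $N_{\tau_1}$ of such steps: the zero-reading event contributes $0$ by isotropy (this is also what kills the centering terms in (\ref{isara})), the two-or-more event contributes $O(\epsilon^{7/3}|\theta|^2)$, and the single-reading event contributes at least $c_{10}\epsilon^2|\theta|^2$ via the variance lower bound from Definition \ref{gaffur}(c). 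The conclusion then requires $\epsilon^{1/3}<c_{10}/c_7$ and $|\theta|$ transversal with $0<|\theta|\lesssim \epsilon^2$, producing inequality only on a set of velocities accumulating at $\xi_o$ (not on a full punctured neighborhood), which still suffices for the theorem. Two further points your sketch glosses over: for $d=3$ the linear tilt is insufficient (the tube volume forces $n\delta_n=O(1)$) and one must use the quadratic two-point tilt with local CLT control as in Subsection \ref{ayyy}; and the relevant dual slopes are small \emph{transversal} $\theta$ (with $\langle\theta,e_d\rangle=0$), not a $\theta_o$ aligned with $e_d$ — the sign of the correlation at such $\theta$ is what dictates the direction of the tilt.
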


The proofs of our results are based on a technique that combines the so-called \textit{fractional moment} method with a certain change of measure (which we will refer to as \textit{tilting the environment}). This technique has been developed for analyzing the so-called polymer pinning model, c.f.\ \cite{DeGiLaTo, ToniCrit, GiLaTo}, and it has been recently 
refined 
by Lacoin \cite{Lacoin} for obtaining certain lower bounds 
for the free energy of directed polymers in random environments.
Comparing with the polymer setup, an extra complication occurs
in the RWRE model due to the dependence
of the transition probabilities of the walk on the environment. 
(In the polymer model discussed above, the walk is a simple random walk,
and the environment only appears in the evaluation of exponential moments
with respect to the random walk.)
The difficulty in the RWRE setup, and much of our work, 
lies in overcoming this dependency.
For space-time RWRE, this task is greatly simplified because
each site is visited at most once. For space-only RWRE, where this is not true,
we employ a perturbative approach that unfortunately restricts the class
of models considered, see Section \ref{sec-openprob} for
further comments.

Here is how the rest of the paper is organized: In Section \ref{SpaceTimeSection}, we consider space-time RWRE and prove Theorems \ref{QneqAst} and \ref{QneqAstyeni} by adapting the relevant arguments given in \cite{Lacoin}. In Section \ref{SpaceOnlySection}, we focus on space-only walks that are non-nestling relative to $e_d$, and modify the previous proofs by making use of regeneration times. This way, we establish a result (see Theorem \ref{nesin}) analogous to Theorems \ref{QneqAst} and \ref{QneqAstyeni}. The only difference is that Theorem \ref{nesin} is valid under a certain correlation condition, c.f.\ (\ref{theassumption}). Finally, we prove Theorem \ref{QneqAnn} by checking that (\ref{theassumption}) holds whenever $\mathbb{P}$ is in class $\mathcal{M}_\epsilon(d,p)$ with some triple $p$ (as in Definition \ref{gaffur}) and a sufficiently small $\epsilon>0$.

\section{Inequality of the rate functions for space-time RWRE}\label{SpaceTimeSection}

\subsection{Reducing to a fractional moment estimate}

Assume $d\geq1+1$. Recall (\ref{space-time}). Consider a space-time random walk on $\mathbb{Z}^d$ in a uniformly elliptic and i.i.d.\ environment. For every $\theta\in\mathbb{R}^d$, define $$\phi(\theta):=\sum_{z\in\mathcal{R}}\mathrm{e}^{\langle\theta,z\rangle} q(z)$$ where $q(z):=\mathbb{E}[\pi(0,z)]$. Since the walk visits every point at most once, $E_o\left[\exp\{\langle\theta,X_N\rangle\}\right]=\phi(\theta)^N$ for every $N\geq1$.

Define the logarithmic moment generating functions $$\Lambda_q(\theta):=\lim_{N\to\infty}\frac{1}{N}\log E_o^\omega\left[\exp\{\langle\theta,X_N\rangle\}\right]\quad\mbox{and}\quad\Lambda_a(\theta):=\lim_{N\to\infty}\frac{1}{N}\log E_o\left[\exp\{\langle\theta,X_N\rangle\}\right]=\log\phi(\theta).$$
By Varadhan's Lemma, c.f.\ \cite{DemboZeitouniBook}, $\Lambda_q(\theta)=\sup_{\xi\in\mathbb{R}^d}\left\{\langle\theta,\xi\rangle - I_q(\xi)\right\}=I_q^*(\theta)$, the convex conjugate of $I_q$ at $\theta$. Similarly, $\Lambda_a(\theta)=\log\phi(\theta)=I_a^*(\theta)$.

For every $N\geq1$, $\theta\in\mathbb{R}^d$ and $\omega\in\Omega$, define $$W_N(\theta,\omega) := E_o^\omega[\exp\{\langle\theta,X_N\rangle-N\log\phi(\theta)\}].$$
Given any $\alpha\in(0,1)$, Jensen's inequality and the bounded convergence theorem imply that
\begin{align}
\Lambda_q(\theta)-\log\phi(\theta)&=\lim_{N\to\infty}\frac{1}{N}\log W_N(\theta,\cdot)=\mathbb{E}\left[\lim_{N\to\infty}\frac{1}{N}\log W_N(\theta,\cdot)\right]\nonumber\\
&=\lim_{N\to\infty}\frac{1}{N}\mathbb{E}\left[\log W_N(\theta,\cdot)\right]=\lim_{N\to\infty}\frac{1}{N\alpha}\mathbb{E}\left[\log W_N(\theta,\cdot)^\alpha\right]\nonumber\\
&\leq\limsup_{N\to\infty}\frac{1}{N\alpha}\log\mathbb{E}\left[W_N(\theta,\cdot)^\alpha\right]\label{fracmom}\\
&\leq\lim_{N\to\infty}\frac{1}{N\alpha}\log\left(\mathbb{E}\left[W_N(\theta,\cdot)\right]\right)^\alpha=0.\nonumber
\end{align}
\begin{lemma}\label{fracmomlemma}
Assume (\ref{cakmaeminabi}) and (\ref{space-time}). Fix any $\alpha\in(0,1)$. If $d=1+1$, then
\begin{equation}\label{dukkan}
\limsup_{N\to\infty}\frac{1}{N}\log\mathbb{E}
\left[W_N(\theta,\cdot)^\alpha\right]<0
\end{equation}
whenever $\theta\notin sp\{e_2\}$, the one-dimensional vector space spanned by $e_2$.
\end{lemma}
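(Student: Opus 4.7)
The approach combines the fractional moment method with a localized change of measure on the environment (``tilting''), adapted from Lacoin's \cite{Lacoin} analysis of $(1+1)$-dimensional directed polymers. To begin, I reduce to $\theta=\theta_1 e_1$ with $\theta_1\neq 0$. Writing $q_\pm:=q(\pm e_1+e_2)$ and $\psi(\theta_1):=q_+\mathrm{e}^{\theta_1}+q_-\mathrm{e}^{-\theta_1}$, one has $\log\phi(\theta)=\theta_2+\log\psi(\theta_1)$, and since $\langle X_N,e_2\rangle=N$ deterministically, the $\theta_2$-contributions cancel in $\langle\theta,X_N\rangle-N\log\phi(\theta)$, whence $W_N(\theta,\omega)=W_N(\theta_1 e_1,\omega)$. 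Introducing the Cram\'er-tilted transitions $\tilde{q}_\pm:=q_\pm\mathrm{e}^{\pm\theta_1}/\psi(\theta_1)$, I rewrite $W_N$ as a directed-polymer partition function
\[W_N=\tilde{E}_o\!\left[\prod_{i=0}^{N-1}\frac{\pi_\omega(X_i,X_{i+1})}{q(X_{i+1}-X_i)}\right],\qquad \mathbb{E}[W_N]=1,\]
where $\tilde{E}_o$ denotes expectation under the walk with transitions $\tilde{q}$. The quantity $\tilde{q}_+/q_+-\tilde{q}_-/q_-$ vanishes precisely when $\theta_1=0$, and its non-vanishing will drive the gain in the change-of-measure step below.

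Next, I fix a large parameter $L$ and let $A_k(y,y')$ denote the block partition function over times $[kL,(k+1)L)$ with endpoints $y,y'$. By the Markov decomposition at times $kL$, the subadditivity $(\sum_i a_i)^\alpha\leq\sum_i a_i^\alpha$ for $\alpha\in(0,1)$, the mutual independence of $\{A_k\}_k$ (which rests on (\ref{space-time}): distinct blocks depend on disjoint slabs of the i.i.d.\ environment), and translation invariance of $\mathbb{P}$, I obtain
\[\mathbb{E}[W_N^\alpha]\leq\sigma(L)^{N/L},\qquad \sigma(L):=\sum_{y\in\mathbb{Z}}\mathbb{E}[A(0,y)^\alpha],\]
where $A(0,y):=A_1(0,y)$. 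It therefore suffices to show $\sigma(L)<1$ for some large $L$.

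To bound $\sigma(L)$, I introduce a tilted measure $\tilde{\mathbb{P}}$ that shifts the marginal of $\omega_{(x,t)}$ by $\delta=c_0 L^{-1/2}$ at every site $(x,t)$ in the first block with $|x|\leq C\sqrt{L}$, the sign of $\delta$ opposite to that of $\tilde{q}_+/q_+-\tilde{q}_-/q_-$. H\"older's inequality yields
\[\mathbb{E}[A(0,y)^\alpha]\leq\bigl(\tilde{\mathbb{E}}[A(0,y)]\bigr)^\alpha\Bigl(\mathbb{E}\bigl[(d\mathbb{P}/d\tilde{\mathbb{P}})^{\alpha/(1-\alpha)}\bigr]\Bigr)^{1-\alpha}.\]
The first factor satisfies $\sum_y\tilde{\mathbb{E}}[A(0,y)]=\tilde{\mathbb{E}}[Z_L]\lesssim\mathrm{e}^{-c\sqrt{L}}$ -- each of the walk's $L$ visits to the tilted window contributes an expected factor $1-\Theta(\delta)$, and escape from the window has exponentially small cost by Gaussian concentration -- while the Radon--Nikodym cost is $\exp(O(L^{3/2}\delta^2))=\exp(O(\sqrt{L}))$, coming from the $O(L^{3/2})$ tilted sites, each contributing $O(\delta^2)$ to the log. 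Summing the H\"older bound over $y$ then gives
\[\sigma(L)\lesssim L^{(1-\alpha)/2}\,\mathrm{e}^{-c_1\alpha\sqrt{L}+c_2(1-\alpha)\sqrt{L}},\]
with $c_1$ proportional to $c_0$ and $c_2$ proportional to $c_0^2$; for $c_0$ small enough the exponent is strictly negative for all large $L$, yielding $\sigma(L)<1$.

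The main technical obstacle is exactly this delicate balance of scales: both the tilt-cost and the tilt-gain exponents are of order $\sqrt{L}$, so the constants must be tracked carefully to ensure the gain dominates. This is precisely the marginal situation of $d=1+1$ (where the walk's range in a block of length $L$ is $\sqrt{L}$); the analogous scaling breaks down in higher space-time dimensions, consistent with Theorem \ref{QequalsAst}.
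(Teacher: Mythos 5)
Your skeleton (polymer rewriting of $W_N$, block factorization at times $kL$ using the once-visited structure of (\ref{space-time}), fractional moments plus a Hölder change of measure on the environment, gain driven by $\tilde q_+/q_+-\tilde q_-/q_-\neq0$) matches the paper's strategy, but the key quantitative claim in your change-of-measure step is false, and the argument collapses there. You tilt every site of a tube of width $C\sqrt L$ by $\delta=c_0L^{-1/2}$, pay the Radon--Nikodym cost $\exp(O(\delta^2\cdot L^{3/2}))=\exp(O(c_0^2\sqrt L))$ unconditionally, and claim the gain $\tilde{\mathbb{E}}[Z_L]\lesssim \mathrm{e}^{-c\sqrt L}$ because ``escape from the window has exponentially small cost.'' But the escape probability of the tilted walk from a tube of width $C\sqrt L$ over $L$ steps is of order $\mathrm{e}^{-cC^2}$ --- a constant in $L$ --- and on escape trajectories no gain is accrued at the out-of-tube sites. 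More precisely, paths that behave typically up to time $\eta L$, then wander out of the tube (cost $\mathrm{e}^{-cC^2/\eta}$) and stay out (polynomial cost), contribute at least $\mathrm{e}^{-c\,c_0\eta\sqrt L}$ times constants, so $\tilde{\mathbb{E}}[Z_L]=\mathrm{e}^{-o(\sqrt L)}$; it is not $\mathrm{e}^{-c\sqrt L}$ for any fixed $c>0$. Consequently your bound $\sigma(L)\lesssim L^{(1-\alpha)/2}\mathrm{e}^{-c_1\alpha\sqrt L+c_2(1-\alpha)\sqrt L}$ is unjustified: the cost exponent really is of order $\sqrt L$, the attainable gain exponent is not, and the Hölder product diverges instead of giving $\sigma(L)<1$. (Two secondary problems: a literal shift of the marginal of $\omega_x$ need not be absolutely continuous with respect to a general uniformly elliptic product law, e.g.\ a two-point law, so $d\mathbb{P}/d\tilde{\mathbb{P}}$ may not exist --- one must use an exponential tilt or a bounded penalization; and the window must be a tube around the tilted drift line $i\mapsto i\xi(\theta)$, not $\{|x|\leq C\sqrt L\}$, since $\xi(\theta)\neq e_2$ for $\theta\notin sp\{e_2\}$.)

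The paper avoids exactly this trap by never asking for a gain that grows with the block length. Its change of measure is the bounded function $g=\exp\{\sum_jf_K(\delta_nD(B_j))\}$ of (\ref{cii}), with $\delta_n=C_1^{-1/2}n^{-3/4}$ chosen so that $\delta_nD(B_j)$ has $O(1)$ variance; then the Hölder cost per block is at most a constant (the bound $2^m$ in (\ref{yeniasama})), so a constant gain $\mathrm{e}^{-K}$ per block suffices. That gain is realized because on the event that the path stays in the tube, the path's own contribution to $\delta_nD(B_1)$ has mean $\mu n\delta_n\sim n^{1/4}\to\infty$ (with $\mu>0$ by FKG, the analogue of your sign observation), forcing $\delta_nD(B_1)\geq\mathrm{e}^{K^2}$ up to error events controlled by Chebyshev; the escape event only needs to be a small constant, handled by Donsker with $C_1$ large, as in (\ref{sirabunda}) and Lemma \ref{lessthanhalf}. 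The block size $n$ is then a fixed large constant and the exponential decay in $N$ comes from the product over $N/n$ independent blocks. If you want to keep your formulation, you would have to redesign the tilt along these lines (bounded or truncated tilt, constant-order gain and cost per block); as written, the balance of $\sqrt L$-exponents you rely on cannot be achieved.
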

\begin{lemma}\label{fracmomlemmayeni}
Assume (\ref{cakmaeminabi}) and (\ref{space-time}). Fix any $\alpha\in(0,1)$. If $d=2+1$, then there exists a $\beta>0$ such that (\ref{dukkan}) holds whenever $\mathrm{dist}(\theta, sp\{e_3\})\in(0,\beta)$.
\end{lemma}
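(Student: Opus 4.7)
The plan is to adapt the fractional-moment and environment-tilting strategy of Lacoin~\cite{Lacoin} to the space-time RWRE setting. The starting point is standard: for any probability measure $\tilde{\mathbb{P}}_N\ll\mathbb{P}$, H\"older's inequality with exponents $1/(1-\alpha)$ and $1/\alpha$ gives
$$\mathbb{E}\bigl[W_N(\theta,\cdot)^\alpha\bigr]\;\leq\;\mathbb{E}\!\left[\left(\frac{d\tilde{\mathbb{P}}_N}{d\mathbb{P}}\right)^{-\alpha/(1-\alpha)}\right]^{1-\alpha}\,\tilde{\mathbb{E}}_N\bigl[W_N(\theta,\cdot)\bigr]^{\alpha},$$
so it suffices to build $\tilde{\mathbb{P}}_N$ for which the first factor is $\exp(o(N))$ and the second is $\exp(-cN)$ for some $c>0$.

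I would construct $\tilde{\mathbb{P}}_N$ by coarse-graining at a space-time scale $(\sqrt{L},L)$, where $L=L(\alpha)$ is to be chosen. Let $\bar\theta$ denote the projection of $\theta$ onto $sp\{e_1,e_2\}$. Partition $\{0,1,\ldots,N\}$ into time blocks of length $L$; in the $k$-th time block place a spatial $\ell_\infty$-tube $T_k$ of radius $R=C\sqrt{L}$ centered on the straight-line trajectory $t\mapsto t\,\bar\xi(\theta)$ predicted by the $\theta$-biased annealed walk (whose spatial drift is the projection of $\nabla\log\phi(\theta)$). Outside $\bigcup_kT_k$ set $\tilde{\mathbb{P}}_N=\mathbb{P}$; inside each $T_k$, at every site $x$, replace the marginal law of $\pi(x,\cdot)$ by a uniformly elliptic perturbation whose mean step is shifted by $-\delta\,\bar\theta/|\bar\theta|$, for a small parameter $\delta>0$.

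For the cost factor, independence of $\omega$ across sites together with a Taylor expansion of the Radon--Nikodym derivative around $\delta=0$ gives a bound of the form $\exp(C'\alpha\,\delta^2\,LN)$, using $|\bigcup_kT_k|=O(R^2\cdot L\cdot N/L)=O(LN)$. For the benefit factor, decompose $\tilde{\mathbb{E}}_N[W_N]$ according to whether the walk stays inside every $T_k$: on that event only the tilted marginals $\tilde{q}$ are seen, giving a bound $(\tilde\phi(\theta)/\phi(\theta))^N$ with $\tilde\phi(\theta):=\sum_z\tilde{q}(z)e^{\langle\theta,z\rangle}$, and a short computation yields $\tilde\phi(\theta)/\phi(\theta)\leq 1-c\,\delta|\bar\theta|$, hence a contribution $\exp(-cN\delta|\bar\theta|)$; the complementary exit probability, estimated under the $\theta$-biased annealed walk law (where $Y_t$ has drift $\bar\xi(\theta)\sim\bar\theta$ and Gaussian fluctuation of scale $\sqrt{t}$), is exponentially small in $N$ provided $|\bar\theta|\ll L^{-1/2}$. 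Choosing $\delta=c_1|\bar\theta|/L$ balances cost against benefit and yields a net exponent of order $-|\bar\theta|^2N/L<0$; then $L=L(\alpha)$ is taken large enough to absorb the constants arising from H\"older, and one sets $\beta(\alpha):=L(\alpha)^{-1/2}$.

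The step I expect to be hardest is the quantitative control of the walk's exit probability from $\bigcup_kT_k$: the estimate must be exponentially small in $N$ (not merely in $N/L$) and should accumulate sensibly across blocks, since the walk's deviation from $t\,\bar\xi(\theta)$ grows like $\sqrt{t}$. One likely needs either a more refined union bound over all macroscopic block-by-block trajectories the walk can follow (each paired with a per-block entropy cost) or a tilt that tracks the walk itself in a coupled way. In contrast with the $d=1+1$ case of Lemma~\ref{fracmomlemma}, where a single uniform tilt on a thin tube already suffices and any $|\bar\theta|>0$ works, the marginally relevant character of the $d=2+1$ disorder forces $L\to\infty$ and hence restricts us to a punctured neighbourhood of $sp\{e_3\}$; this is exactly what makes Theorem~\ref{QneqAstyeni} local around $\xi_o$ rather than global.
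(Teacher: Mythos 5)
Your proposal reproduces the correct general frame (fractional moment plus a change of measure on the environment, coarse-grained in blocks), but two of its load-bearing steps do not hold, and they are precisely the points where the $d=2+1$ case is delicate. First, your tubes $T_k$ are centered on the \emph{deterministic} line $t\mapsto t\bar\xi(\theta)$. Under the $\theta$-biased annealed law the walk has exactly that drift but diffusive fluctuations of order $\sqrt t$, so over the horizon $N\gg L$ it remains within distance $C\sqrt L$ of that line only with probability of order $\mathrm{e}^{-cN/L}$: it is the \emph{staying} probability, not the exit probability, that is exponentially small, and no condition $|\bar\theta|\ll L^{-1/2}$ changes this, since the drift is already subtracted by the centering. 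Hence the benefit $(\tilde\phi(\theta)/\phi(\theta))^N$ is only available on a negligible event, while on the typical (exit) event the walk spends most of its time in untilted environment and nothing is gained. You flag this and suggest a union bound over block-by-block trajectories; that is exactly what the paper does, decomposing $W_N=\sum_Y\bar W_N(\theta,\cdot,Y)$ according to $X_{jn}-\lfloor jn\xi(\theta)\rfloor\in J_{y_j}$, applying the fractional moment to each $\bar W_N$, and recentering the tube $B_j$ at $\sqrt n\,y_{j-1}$ so the tilt tracks the walk. The price is that the single-block expectation must then beat the sum over $y$ (and the factor $2^m$ from the H\"older cost), i.e.\ it must be made smaller than a fixed constant of the form $(8R)^{-1/\alpha}$; merely being $<1$ by a $\theta$-dependent whisker is not enough.

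This is where the second, more fundamental gap appears: even after the walk-tracking fix, your change of measure is \emph{linear} in the disorder (a mean shift $\delta$ on each site of a tube of volume $\sim L^2$ per time block of length $L$). The H\"older cost forces $\delta\lesssim|\bar\theta|/L$, so the gain per block, $\sim L\delta|\bar\theta|\lesssim|\bar\theta|^2$, is bounded uniformly in $L$: taking $L(\alpha)$ large buys nothing, and since the lemma lives precisely in the regime $|\bar\theta|$ small, a per-block factor $\mathrm{e}^{-c|\bar\theta|^2}$ cannot reach the fixed smallness required above. This is the marginal relevance of $d=2+1$ that the paper records at the start of Subsection \ref{ayyy}: a linear $D(B_1)$ would force $\delta_n\le O(n^{-1})$, so $n\delta_n$ stays bounded and the $d=1+1$ proof does not carry over. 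The paper's (Lacoin's) remedy is a change of measure \emph{quadratic} in the disorder, $D(B_1)=\sum V((r,k),(s,l))\,a(\theta,(r,k))\,a(\theta,(s,l))$ with the long-range kernel $V=|k-l|^{-1}\mathbf{1}\{\cdots\}$: its mean along the path is of order $\mu^2 n\log n$ (Lemma \ref{gereklilemmabir}) while its fluctuation over the tube is only $O(n\sqrt{\log n})$, so with $\delta_n=n^{-1}(\log n)^{-1/2}$ the per-block gain grows like $\sqrt{\log n}$, and the second-moment control requires the local CLT of Lemma \ref{localCLTrefined}. Finally, the restriction $\mathrm{dist}(\theta,sp\{e_3\})\in(0,\beta)$ does not arise from an exit estimate ($\beta=L^{-1/2}$); it arises from the positivity of the correlation $\mu=E_o[\exp\{\langle\theta,X_1\rangle-\log\phi(\theta)\}a(\theta,0)]$, which in $d=1+1$ follows from FKG for all $\theta\notin sp\{e_2\}$ but in $d=2+1$ is proved only perturbatively near the axis (Lemma \ref{gereklilemmaiki}). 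Your scheme faces the same sign question in disguise: a prescribed mean shift of the marginal law of $\pi(0,\cdot)$ need not even be absolutely continuous with respect to $\mathbb{P}$, so one tilts by $\mathrm{e}^{-\lambda a(\theta,x)}$, and whether this decreases the weight hinges on that same correlation being positive.
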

\begin{remark}
For every $\theta\in sp\{e_d\}$, (\ref{space-time}) implies that $W_N(\theta,\cdot)=1$ and $\Lambda_q(\theta)=\log\phi(\theta)$.
\end{remark}
When $d=1+1$, it follows from (\ref{fracmom}) and Lemma \ref{fracmomlemma} that $\Lambda_q(\cdot)<\log\phi(\cdot)$ on $\{\theta\in\mathbb{R}^2: \theta\notin sp\{e_2\}\}$. By convex duality, $I_a<I_q$ on $\{\nabla\log\phi(\theta):\theta\notin sp\{e_2\}\}$. It is easy to see that the latter set is equal to $((-1,1)\times\{e_2\})\setminus\{\xi_o\}$. In combination with (\ref{husnukon}), this proves Theorem \ref{QneqAst}.

Similarly, when $d=2+1$, Lemma \ref{fracmomlemmayeni} implies that $I_a<I_q$ on $\{\nabla\log\phi(\theta): \mathrm{dist}(\theta,sp\{e_3\})\in(0,\beta)\}$. One can check that this set is of the form $(\mathcal{G}_{st}\times\{e_3\})\setminus\{\xi_o\}$ where $\mathcal{G}_{st}\subset\mathbb{R}^2$ is open and $\mathcal{G}_{st}\times\{e_3\}$ contains $\xi_o$. This proves Theorem \ref{QneqAstyeni}.

The rest of this section is devoted to proving Lemmas \ref{fracmomlemma} and \ref{fracmomlemmayeni}.

\subsection{Decomposing into paths}\label{decomss}

Assume $d=1+1$ or $2+1$. Let $\mathbb{V}_d:=\mathbb{Z}^{d-1}\times\{0\}\subset\mathbb{Z}^d$. Fix an $n$ of the form $k^2$, with $k$ an integer
to be determined later 
(e.g., for $d=1+1$, this $n$ is chosen  so that the conclusion of
Lemma
\ref{lessthanhalf} below holds).
When $d=1+1$, let
\begin{equation}\label{azkalbir}
J_y:=\left[(y'-\frac{1}{2})\sqrt{n},(y'+\frac{1}{2})\sqrt{n}\right)\times\{0\}\subset\mathbb{R}^2
\end{equation} for every $y=(y',0)\in\mathbb{V}_2$. Similarly, when $d=2+1$, let $$J_y:=\left[(y'-\frac{1}{2})\sqrt{n},(y'+\frac{1}{2})\sqrt{n}\right)\times\left[(y''-\frac{1}{2})\sqrt{n},(y''+\frac{1}{2})\sqrt{n}\right)\times\{0\}\subset\mathbb{R}^3$$ for every $y=(y',y'',0)\in\mathbb{V}_3$.

Take $N=nm$ for some $m\geq1$. For every $\theta\in\mathbb{R}^d$, $\omega\in\Omega$ and $Y=(y_1,\ldots,y_m)\in(\mathbb{V}_d)^m$, define
\begin{equation}\label{azkaliki}
\bar{W}_N(\theta,\omega,Y):=E_o^\omega[\exp\{\langle\theta,X_N\rangle-N\log\phi(\theta)\}, X_{jn}-\lfloor jn\xi(\theta)\rfloor\in J_{y_j}\mbox{ for every }j\leq m]
\end{equation} where $\xi(\theta)=\nabla\log\phi(\theta)$. (For $u\in\mathbb{R}^d$, $\lfloor u\rfloor$ denotes the closest element of $\mathbb{Z}^d$ to $u$. If there is more than one closest element, then take the one whose index is the smallest with respect to the lexicographic order.) Note that $\langle\xi(\theta),e_d\rangle=1$ because $\langle z,e_d\rangle=1$ for every $z\in\mathcal{R}_{st}$.

Since $\mathbb{V}_d$ is contained in the disjoint union $\cup_{y\in\mathbb{V}_d}J_y$, we see that $W_N(\theta,\omega)=\sum_{Y}\bar{W}_N(\theta,\omega,Y)$. Hence, $W_N(\theta,\omega)^\alpha\leq\sum_{Y}\bar{W}_N(\theta,\omega,Y)^\alpha$ by subadditivity, and 
\begin{equation}\label{asama}
\mathbb{E}[W_N(\theta,\cdot)^\alpha]\leq\sum_{Y}\mathbb{E}\left[\bar{W}_N(\theta,\cdot,Y)^\alpha\right].
\end{equation}

In the rest of this section, we will treat the cases $d=1+1$ and $d=2+1$ separately.

\subsection{Tilting along a path ($d=1+1$)}\label{tiltsubsec}

Our aim is to prove Lemma \ref{fracmomlemma} which states that $\mathbb{E}[W_N(\theta,\cdot)^\alpha]$ decays exponentially in $N$. Let us say a few words about our strategy. For any function $g(\theta,\cdot)$ on $\Omega$,
\begin{align}
\mathbb{E}[W_N(\theta,\cdot)^\alpha]&=\mathbb{E}\left[(W_N(\theta,\cdot)g(\theta,\cdot))^\alpha g(\theta,\cdot)^{-\alpha}\right]\nonumber\\
&\leq\mathbb{E}\left[W_N(\theta,\cdot)g(\theta,\cdot)\right]^\alpha\mathbb{E}\left[g(\theta,\cdot)^{-\frac{\alpha}{1-\alpha}}\right]^{1-\alpha}\label{firstsecondtermbh}
\end{align}
by H\"older's inequality. For every $i\geq1$, $E_{X_i}^\omega[\exp\{\langle\theta,X_{i+1}-X_i\rangle-\log\phi(\theta)\}]$ and $\langle\theta,v(T_{X_i}\omega)-\xi_o\rangle$ are correlated, c.f.\ (\ref{FKG}), where $v(\cdot)$ denotes the random drift vector. We could try to exploit this fact by tilting the environment at the points on the path in a clever way, e.g., by choosing a $g(\theta,\cdot)$ that penalizes the environments for which $\frac1{N}\sum_{i=1}^N\langle\theta,v(T_{X_i}\omega)-\xi_o\rangle$ deviates from zero. This way, we could make the first expectation in (\ref{firstsecondtermbh}) small. However, there is a problem: we do not know where the path is, and if we naively tilt the environment everywhere, then the second expectation in (\ref{firstsecondtermbh}) might become too large. Fortunately, it is possible to resolve this issue by first decomposing $\mathbb{E}[W_N(\theta,\cdot)^\alpha]$ as in (\ref{asama}) (so that we know roughly where the path is), and then tilting the environment on a tube which contains most of the path with a high probability.

Given $m\geq1$, $\theta\notin sp\{e_2\}$, $C_1\geq1$ and $Y=(y_1,\ldots,y_m)\in(\mathbb{V}_2)^m$,
let 
\begin{equation}\label{bicey}
B_j:=\{(s,i)\in\mathbb{Z}^2: (j-1)n\leq i<jn, \left|(s,i)-\lfloor i\xi(\theta)\rfloor-\sqrt{n}y_{j-1}\right|\leq C_1\sqrt{n}\}
\end{equation} for every $j\in\{1,\ldots,m\}$. Here, $y_o=(0,0)$. 
Recall that $n=k^2$ for some integer $k$.

Fix a large $K$ and a small $\delta_n$, both to be determined later
(depending on the choice of $\alpha$, see \eqref{star1}, \eqref{star2}
and Lemma \ref{lessthanhalf}). 
Define $f_K(u):=-K\one_{u\geq\mathrm{e}^{K^2}}$ and
\begin{equation}\label{cii}
g(\theta,\omega,Y):=
\exp\sum_{j=1}^mf_K\left(\delta_nD(B_j)\right)
>0\,,
\end{equation} where
\begin{equation}\label{abbrev}
D(B_j):=\sum_{(s,i)\in B_j}a(\theta,(s,i))\qquad\mbox{for every $j\in\{1,\ldots,m\}$,}
\end{equation} and $a(\theta,x):=\langle\theta,v(T_x\omega)-\xi_o\rangle$ for every $x\in\mathbb{Z}^2$, c.f.\ (\ref{nonnestabi}). Note that $\mathbb{E}[a(\theta,x)]=0$.

As before, take $N=nm$. By H\"older's inequality,
\begin{align}
\mathbb{E}[\bar{W}_N(\theta,\cdot,Y)^\alpha]&=\mathbb{E}\left[(\bar{W}_N(\theta,\cdot,Y)g(\theta,\cdot,Y))^\alpha g(\theta,\cdot,Y)^{-\alpha}\right]\nonumber\\
&\leq\mathbb{E}\left[\bar{W}_N(\theta,\cdot,Y)g(\theta,\cdot,Y)\right]^\alpha\mathbb{E}\left[g(\theta,\cdot,Y)^{-\frac{\alpha}{1-\alpha}}\right]^{1-\alpha}.\label{firstsecondterm}
\end{align}

Let us control the second term in (\ref{firstsecondterm}). $B_j$'s are pairwise disjoint and they each have $n(2C_1\sqrt{n}+1)$ elements.
Since the environment is i.i.d.,
\begin{align}
\mathbb{E}\left[g(\theta,\cdot,Y)^{-\frac{\alpha}{1-\alpha}}\right]&=\mathbb{E}\left[\exp\left(-\frac{\alpha}{1-\alpha}\sum_{j=1}^mf_K\left(\delta_nD(B_j)\right)\right)\right]=\prod_{j=1}^m\mathbb{E}\left[\exp\left(-\frac{\alpha}{1-\alpha}f_K\left(\delta_nD(B_j)\right)\right)\right]\nonumber\\
&=\mathbb{E}\left[\exp\left(-\frac{\alpha}{1-\alpha}f_K\left(\delta_nD(B_1)\right)\right)\right]^m\leq\left(1+\mathrm{e}^{\frac{\alpha}{1-\alpha}K}\mathbb{P}\left(\delta_nD(B_1)\geq\mathrm{e}^{K^2}\right)\right)^m.\label{bateman}
\end{align}
Note that, by Chebyshev's inequality,
\begin{align*}
\mathbb{P}\left(\delta_nD(B_1)\geq\mathrm{e}^{K^2}\right)&\leq\mathrm{e}^{-2K^2}\delta_n^2\mathbb{E}\left[D(B_1)^2\right]=\mathrm{e}^{-2K^2}\delta_n^2\mathbb{E}\left[\sum_{(s,i)\in B_1}a(\theta,(s,i))^2\right]\\
&=\mathrm{e}^{-2K^2}\delta_n^2n(2C_1\sqrt{n}+1)\mathbb{E}\left[a(\theta,(0,0))^2\right]\\
&\leq\mathrm{e}^{-2K^2}\delta_n^23C_1n^{3/2}\mathbb{E}\left[a(\theta,(0,0))^2\right]
\end{align*}
since, by the i.i.d.\ assumption on the environment,
only the diagonal terms survive. Take 
\begin{equation}
\label{star1}
\delta_n=C_1^{-1/2}n^{-3/4}\,,
\end{equation}
where $C_1$ is still to be defined (and will be chosen
as in Lemma \ref{lessthanhalf}).
 Then, the RHS of (\ref{bateman}) is bounded from above by
$$\left(1+3\mathbb{E}\left[a(\theta,(0,0))^2\right]\mathrm{e}^{\frac{\alpha}{1-\alpha}K-2K^2}\right)^m\leq\left(1+12\mathrm{e}^{\frac{\alpha}{1-\alpha}K-2K^2}\right)^m\leq 2^m$$ 
as soon as
\begin{equation}
\label{star2}
 12\mathrm{e}^{\frac{\alpha}{1-\alpha}K-2K^2}\leq 1.
\end{equation} 
Recalling (\ref{asama}) and (\ref{firstsecondterm}), we see that 
\begin{equation}\label{yeniasama}
\mathbb{E}[W_N(\theta,\cdot)^\alpha]\leq2^m\sum_{Y}\mathbb{E}\left[\bar{W}_N(\theta,\cdot,Y)g(\theta,\cdot,Y)\right]^\alpha.
\end{equation}

\subsection{Estimating the expectation under the tilt ($d=1+1$)}\label{estsubsec}

For every $m\geq1$, $\theta\notin sp\{e_2\}$, $\omega\in\Omega$ and $Y\in(\mathbb{V}_2)^m$, let $N=nm$ as before. By the Markov property,
\begin{align*}
\bar{W}_N(\theta,\omega,Y)&=\sum_{x_1,\ldots,x_m\in\mathbb{Z}^2}E_o^\omega[\exp\{\langle\theta,X_N\rangle-N\log\phi(\theta)\}, X_{jn}-\lfloor jn\xi(\theta)\rfloor=x_j\in J_{y_j}\ \forall j\leq m]\\
&=\sum_{x_1,\ldots,x_m\in\mathbb{Z}^2}E_o^\omega[\exp\{\langle\theta,X_n\rangle-n\log\phi(\theta)\}, X_{n}-\lfloor n\xi(\theta)\rfloor=x_1\in J_{y_1}]\\
&\hspace{1.7cm}\times E_{x_1+\lfloor n\xi(\theta)\rfloor}^\omega[\exp\{\langle\theta,X_{n}-(x_1+\lfloor n\xi(\theta)\rfloor)\rangle-n\log\phi(\theta)\},\\
&\hspace{6.6cm}X_{n}-\lfloor2n\xi(\theta)\rfloor=x_2\in J_{y_2}]\\
&\hspace{1.7cm}\times\cdots\\
&=\sum_{x_1,\ldots,x_m\in\mathbb{Z}^2}E_o^\omega[\exp\{\langle\theta,X_n\rangle-n\log\phi(\theta)\}, X_{n}-\lfloor n\xi(\theta)\rfloor=x_1\in J_{y_1}]\\
&\hspace{1.7cm}\times E_{x_1-\sqrt{n}y_1}^{T_{\lfloor n\xi(\theta)\rfloor+\sqrt{n}y_1}\omega}[\exp\{\langle\theta,X_{n}-(x_1-\sqrt{n}y_1)\rangle-n\log\phi(\theta)\},\\
&\hspace{4.5cm}\left.X_{n}-\lfloor n\xi(\theta)\rfloor=x_2-\sqrt{n}y_1\in J_{y_2}-\sqrt{n}y_1\right]\\
&\hspace{1.7cm}\times\cdots.
\end{align*}
Recall (\ref{cii}) and (\ref{abbrev}). It follows from the i.i.d.\ environment assumption that
\begin{align*}
&\mathbb{E}\left[\bar{W}_N(\theta,\cdot,Y)g(\theta,\cdot,Y)\right]\\
&\qquad=\sum_{x_1,\ldots,x_m}\mathbb{E}\left[E_o^\omega[\exp\{\langle\theta,X_n\rangle-n\log\phi(\theta)+f_K(\delta_nD(B_1))\},X_{n}-\lfloor n\xi(\theta)\rfloor=x_1\in J_{y_1}]\right.\\
&\hspace{2.7cm}\times E_{x_1-\sqrt{n}y_1}^{T_{\lfloor n\xi(\theta)\rfloor+\sqrt{n}y_1}\omega}\left[\exp\{\langle\theta,X_{n}-(x_1-\sqrt{n}y_1)\rangle-n\log\phi(\theta)+f_K(\delta_nD(B_1))\},\right.\\
&\hspace{8.1cm}\left.X_{n}-\lfloor n\xi(\theta)\rfloor=x_2-\sqrt{n}y_1\in J_{y_2}-\sqrt{n}y_1\right]\\
&\hspace{2.7cm}\left.\times\cdots\right]\\
&\qquad=\sum_{x_1,\ldots,x_m}E_o[\exp\{\langle\theta,X_n\rangle-n\log\phi(\theta)+f_K(\delta_nD(B_1))\},X_{n}-\lfloor n\xi(\theta)\rfloor=x_1\in J_{y_1}]\\
&\hspace{2.2cm}\times E_{x_1-\sqrt{n}y_1}\left[\exp\{\langle\theta,X_{n}-(x_1-\sqrt{n}y_1)\rangle-n\log\phi(\theta)+f_K(\delta_nD(B_1))\},\right.\\
&\hspace{6.8cm}\left.X_{n}-\lfloor n\xi(\theta)\rfloor=x_2-\sqrt{n}y_1\in J_{y_2}-\sqrt{n}y_1\right]\\
&\hspace{2.2cm}\times\cdots\\
&\qquad\leq E_o[\exp\{\langle\theta,X_n\rangle-n\log\phi(\theta)+f_K(\delta_nD(B_1))\},X_{n}-\lfloor n\xi(\theta)\rfloor\in J_{y_1}]\\
&\hspace{1.1cm}\times\max_{x_1\in J_{y_1}}E_{x_1-\sqrt{n}y_1}\left[\exp\{\langle\theta,X_{n}-(x_1-\sqrt{n}y_1)\rangle-n\log\phi(\theta)+f_K(\delta_nD(B_1))\},\right.\\
&\hspace{8.7cm}\left.X_{n}-\lfloor n\xi(\theta)\rfloor\in J_{y_2}-\sqrt{n}y_1\right]\\
&\hspace{1.1cm}\times\cdots\\
&\qquad=E_o[\exp\{\langle\theta,X_n\rangle-n\log\phi(\theta)+f_K(\delta_nD(B_1))\},X_{n}-\lfloor n\xi(\theta)\rfloor\in J_{y_1}]\\
&\hspace{1.1cm}\times\max_{x_1\in J_o}E_{x_1}\left[\exp\{\langle\theta,X_{n}-x_1\rangle-n\log\phi(\theta)+f_K(\delta_nD(B_1))\},X_{n}-\lfloor n\xi(\theta)\rfloor\in J_{y_2-y_1}\right]\\
&\hspace{1.1cm}\times\cdots.
\end{align*}
Plugging this in (\ref{yeniasama}), we conclude that
$$\mathbb{E}[W_N(\theta,\cdot)^\alpha]\leq\left(2\sum_{y\in\mathbb{V}_2}\max_{x\in J_o}E_x\left[\exp\{\langle\theta,X_{n}-x\rangle-n\log\phi(\theta)+f_K(\delta_nD(B_1))\},X_{n}-\lfloor n\xi(\theta)\rfloor\in J_y\right]^\alpha\right)^m.$$
The RHS of this inequality decays exponentially in $m$ if the term in the parentheses is strictly less than $1$. Since $N=nm$ and $n$ was fixed, 
this proves Lemma \ref{fracmomlemma} (and hence Theorem \ref{QneqAst}), provided that we have
\begin{lemma}\label{lessthanhalf}
Assume (\ref{cakmaeminabi}) and (\ref{space-time}). If $d=1+1$, $\alpha\in(0,1)$, $\theta\notin sp\{e_2\}$ and $\delta_n=C_1^{-1/2}n^{-3/4}$, then
\begin{equation}\label{lemansam}
\sum_{y\in\mathbb{V}_2}\max_{x\in J_o}E_x\left[\exp\{\langle\theta,X_{n}-x\rangle-n\log\phi(\theta)+f_K(\delta_nD(B_1))\},X_{n}-\lfloor n\xi(\theta)\rfloor\in J_y\right]^\alpha<1/2
\end{equation}
whenever $n$, $K$ and $C_1$ are sufficiently large.
\end{lemma}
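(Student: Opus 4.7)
The plan is to carry out a Cram\'er tilt of the path distribution and then exploit a positive correlation between the resulting biased walk and the environment. By the space-time property, the increments $X_{i+1}-X_i$ are i.i.d.\ with marginal $q$ under $P_x$, so $\exp\{\langle\theta,X_n-x\rangle-n\log\phi(\theta)\}$ is exactly the Radon-Nikodym derivative against $P_x$ of the joint law $\tilde P_x$ whose path increments are i.i.d.\ with the tilted distribution $\tilde q(z):=e^{\langle\theta,z\rangle}q(z)/\phi(\theta)$ (so $\tilde E_x[X_n-x]=n\xi(\theta)$) and whose conditional environment given the path agrees with $P_x$ (at each visited site $x_i$, $\omega_{x_i}$ is size-biased by $\pi(0,z_{i+1})/q(z_{i+1})$, while at unvisited sites it retains its law $\mathbb P$). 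The left-hand summand in (\ref{lemansam}) thus equals $\tilde E_x[e^{f_K(\delta_n D(B_1))}\one_{X_n-\lfloor n\xi(\theta)\rfloor\in J_y}]$.

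The second step is to show that under $\tilde P_x$ the quantity $\delta_n D(B_1)$ is typically much larger than $e^{K^2}$, so the penalty $e^{f_K}=e^{-K}$ is activated on most paths. A direct calculation yields
\begin{equation*}
\mu:=\tilde E_x[a(\theta,x_i)]=\frac{\mathrm{Cov}_{\mathbb P}(\langle\theta,v(\omega)\rangle,\phi_\omega(\theta))}{\phi(\theta)},\qquad\phi_\omega(\theta):=\sum_{z\in\mathcal R_{st}}e^{\langle\theta,z\rangle}\pi(0,z).
\end{equation*}
When $d=1+1$ and $\theta\notin sp\{e_2\}$, both $\langle\theta,v(\omega)\rangle$ and $\phi_\omega(\theta)$ are strictly monotonic affine functions of the single random parameter $p:=\pi(0,(1,1))$ with slopes of the same sign as $\theta_1$, so (the degenerate case of deterministic $p$ being excluded) $\mu>0$. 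Let $A:=\{X_i\in B_1\text{ for all }0\le i<n\}$. Kolmogorov's maximal inequality applied to the centred tilted walk gives $\tilde P_x(A^c)\le\gamma_0/C_1^2$ uniformly in $n$ and $x\in J_o$. On $A$, all $n$ visited sites lie in $B_1$, so the conditional mean of $D(B_1)$ given the path equals $n\mu$ (unvisited-site contributions are centred). Conditional on the path, $D(B_1)$ is a sum of $|B_1|=O(C_1 n^{3/2})$ independent bounded random variables, and Hoeffding's inequality gives $\tilde P_x(D(B_1)<n\mu/2\mid X\in A)\le 2e^{-\gamma_1 n/C_1}$. Since $\delta_n\cdot n\mu/2=(\mu/2)C_1^{-1/2}n^{1/4}\to\infty$, with $G:=\{\delta_n D(B_1)\ge e^{K^2}\}$ we conclude $\tilde P_x(G^c)\le\gamma_0/C_1^2+2e^{-\gamma_1 n/C_1}$ for $n$ large.

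The final step assembles these. Using $e^{f_K}\le e^{-K}\one_G+\one_{G^c}$ together with $(a+b)^\alpha\le a^\alpha+b^\alpha$ for $\alpha\in(0,1)$, the left side of (\ref{lemansam}) is bounded by
\begin{equation*}
e^{-K\alpha}\sum_y\tilde P_x(X_n-\lfloor n\xi(\theta)\rfloor\in J_y)^\alpha+\sum_y\tilde P_x(X_n-\lfloor n\xi(\theta)\rfloor\in J_y,\,G^c)^\alpha.
\end{equation*}
The CLT under $\tilde P_x$ shows the first sum converges to $\int g(u)^\alpha\,du<\infty$, where $g$ is the Gaussian density of the limit of $(X_n^{(1)}-x^{(1)}-n\xi_1(\theta))/\sqrt n$, so the first piece is $\le 1/4$ once $K$ is taken large (this choice also secures (\ref{star2})). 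Because $|z_1|=1$ forces $|X_n^{(1)}-X_0^{(1)}|\le n$, only $O(\sqrt n)$ values of $y'$ are feasible in the second sum; splitting at $|y'|\le M$ (using $\tilde P_x(J_y\cap G^c)\le\tilde P_x(G^c)$) versus $|y'|>M$ (using $\tilde P_x(J_y\cap G^c)\le\tilde P_x(J_y)$) yields $(2M+1)\tilde P_x(G^c)^\alpha+\sum_{|y'|>M}\tilde P_x(J_y)^\alpha$. Choosing first $M$ large (to tame the Gaussian tail), then $C_1$ large (so that $(2M+1)(\gamma_0/C_1^2)^\alpha$ is small), and finally $n$ large (to kill the exponential remainder) forces this piece also below $1/4$, yielding the desired bound $1/2$.

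The main obstacle, and the reason for the hypothesis $\theta\notin sp\{e_2\}$, is the strict positivity $\mu>0$: on $sp\{e_2\}$ one has $\tilde q=q$, the size-biasing at visited sites is trivial, and $D(B_1)$ has zero conditional mean, so the tilt creates no penalty---in harmony with the remark that $W_N(\theta,\cdot)\equiv 1$ whenever $\theta\in sp\{e_d\}$.
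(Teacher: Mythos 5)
Your proposal is correct in substance and follows the same strategy as the paper's proof: strict positivity of the correlation $\mu$ (your covariance computation in the single random parameter $\pi(0,(1,1))$ is exactly the content of the paper's FKG step), confinement of the walk to the tube $B_1$ of (\ref{bicey}), and activation of the penalty $f_K$ because $n\delta_n=C_1^{-1/2}n^{1/4}\to\infty$. Where you genuinely differ is in how the concentration step is executed: you make explicit the conditional law of the environment given the path under the tilted measure (size-biasing at visited sites, law $\mathbb{P}$ elsewhere) and then apply Hoeffding to the conditionally independent environment variables, whereas the paper stays with the weighted annealed expectation, splits $D(B_1-x)$ into off-path and on-path sums, and runs a second-moment Chebyshev bound whose cross terms vanish exactly because $\mu$ is chosen as in (\ref{eq-150310a}). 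Your route gives exponential rather than polynomial error terms and is arguably more transparent; the paper's weighted-moment formulation is the one that is later adapted to the space-only setting of Section \ref{SpaceOnlySection}.

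Two steps need repair, though neither is fatal. First, the conditional mean of $D(B_1)$ given the path is \emph{not} $n\mu$: at a visited site the size-biased mean of $a(\theta,\cdot)$ depends on the step taken there, namely it equals $\mathbb{E}[a(\theta,0)\pi(0,z)]/q(z)$, which in $d=1+1$ has opposite signs for $z=(1,1)$ and $z=(-1,1)$. Hence the conditional mean is $\sum_{i<n}\mathbb{E}[a(\theta,0)\pi(0,z_{i+1})]/q(z_{i+1})$, a path-dependent quantity whose expectation under the tilted step law is $n\mu$; you need an additional (routine) concentration estimate for this i.i.d.\ sum of step functionals before conditioning on the path, after which your Hoeffding argument applies to the deviation of $D(B_1)$ from this random centering. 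Second, with a deviation of order $n\mu$ against $|B_1|\asymp C_1 n^{3/2}$ bounded summands, the Hoeffding exponent is of order $n^{1/2}/C_1$, not $n/C_1$; it still diverges, so your order of choices (first $K$, $M$, $C_1$, then $n$ large) goes through unchanged. Two smaller remarks: for $\alpha\leq 1/2$ the sums $\sum_{|y|>M}\tilde P_x(\cdot)^\alpha$ require a tail bound stronger than second-moment Chebyshev (Hoeffding for the bounded increments suffices; the paper's appeal to ``Chebyshev'' has the same looseness), and your exclusion of a deterministic environment is the same implicit non-degeneracy the paper needs for strict inequality in its FKG step.
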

\noindent
(The proof is valid with 
the constant $1/2$ replaced by any arbitrarily small 
positive number.)

\subsection{Finishing the proof of Theorem \ref{QneqAst}}\label{finsubsec}

It remains to give the
\begin{proof}[Proof of Lemma \ref{lessthanhalf}]
We write the sum in (\ref{lemansam}) as
\begin{equation}\label{twosums}
\sum_{y\in\mathbb{V}_2}\max_{x\in J_o}E_x\left[\cdots\right]^\alpha=\sum_{{y\in\mathbb{V}_2:}\atop{|y|>R}}\max_{x\in J_o}E_x\left[\cdots\right]^\alpha+\sum_{{y\in\mathbb{V}_2:}\atop{|y|\leq R}}\max_{x\in J_o}E_x\left[\cdots\right]^\alpha
\end{equation} with some large constant $R$,
to be determined. Since $f_K(u)=-K\one_{u\geq\mathrm{e}^{K^2}}\leq0$, the first sum on the RHS of (\ref{twosums}) is bounded from above by
\begin{align}
&\sum_{{y\in\mathbb{V}_2:}\atop{|y|>R}}\max_{x\in J_o}E_x\left[\exp\{\langle\theta,X_{n}-x\rangle-n\log\phi(\theta)\},|X_{n}-\lfloor n\xi(\theta)\rfloor-\sqrt{n}y|\leq\frac{\sqrt{n}}{2}\right]^\alpha\nonumber\\
&\qquad\leq\sum_{{y\in\mathbb{V}_2:}\atop{|y|>R}}E_o\left[\exp\{\langle\theta,X_{n}\rangle-n\log\phi(\theta)\},\left|\frac{X_{n}-\lfloor n\xi(\theta)\rfloor}{\sqrt{n}}-y\right|\leq1\right]^\alpha.\label{CLTcandidate}
\end{align}

Consider a tilted space-time walk on $\mathbb{Z}^2$ (in a deterministic environment) with transition probabilities $q^\theta(z):=q(z)\exp\{\langle\theta,z\rangle-\log\phi(\theta)\}$ for $z\in\mathcal{R}_{st}$. Let $\hat{P}_o^\theta$ denote the probability measure it induces on paths. Note that the LLN velocity under $\hat{P}_o^\theta$ is
$$\sum_{z\in\mathcal{R}_{st}}zq(z)\exp\{\langle\theta,z\rangle-\log\phi(\theta)\}=\nabla\log\phi(\theta)=\xi(\theta).$$
With this notation, (\ref{CLTcandidate}) is equal to
$$\sum_{{y\in\mathbb{V}_2:}\atop{|y|>R}}
\hat{P}_o^\theta\left(\left|\frac{X_{n}-\lfloor n\xi(\theta)\rfloor}
{\sqrt{n}}-y\right|\leq1\right)^\alpha
\leq
\sum_{{y\in\mathbb{V}_2:}\atop{|y|>R}}
\hat{P}_o^\theta\left(\left|\frac{X_{n}-\lfloor n\xi(\theta)\rfloor}
{\sqrt{n}}\right|\geq |y|-1\right)^\alpha
$$
which, by 
Chebyshev's inequality,
can be made arbitrarily small (uniformly in large $n$) 
by choosing $R$ sufficiently large.

The second sum on the RHS of (\ref{twosums}) is bounded from above by
$$(2R+1)\max_{x\in J_o}E_x\left[\exp\{\langle\theta,X_{n}-x\rangle-n\log\phi(\theta)+f_K(\delta_nD(B_1))\}\right]^\alpha.$$
Therefore, to conclude the proof of 
Lemma \ref{lessthanhalf}, it suffices to show that
\begin{equation}\label{lastsuff}
E_o\left[\exp\{\langle\theta,X_{n}\rangle-n\log\phi(\theta)+f_K(\delta_nD(B_1-x))\}\right]\leq \left(\frac{1}{8R}\right)^{\alpha^{-1}}
\end{equation}
for every $x\in J_o$.

Similar to $B_1$ defined in (\ref{bicey}), introduce a new set $$\bar{B}_1:=\{(s,i)\in\mathbb{Z}^2: 0\leq i<n, |(s,i)-\lfloor i\xi(\theta)\rfloor|\leq (C_1-1/2)\sqrt{n}\}.$$ Note that $\bar{B}_1\subset B_1-x$ for every $x\in J_o$ since $|x|\leq\sqrt{n}/2$.

\begin{align}
&E_o\left[\exp\{\langle\theta,X_{n}\rangle-n\log\phi(\theta)+f_K(\delta_nD(B_1-x))\}\right]\nonumber\\
&\qquad=\mathrm{e}^{-K}E_o\left[\exp\{\langle\theta,X_{n}\rangle-n\log\phi(\theta)\},\delta_nD(B_1-x)\geq\mathrm{e}^{K^2}\right]\nonumber\\
&\qquad\qquad+E_o\left[\exp\{\langle\theta,X_{n}\rangle-n\log\phi(\theta)\},\{X_i: 0\leq i<n\}\not\subset\bar{B}_1,\delta_nD(B_1-x)<\mathrm{e}^{K^2}\right]\nonumber\\
&\qquad\qquad+E_o\left[\exp\{\langle\theta,X_{n}\rangle-n\log\phi(\theta)\},\{X_i: 0\leq i<n\}\subset\bar{B}_1,\delta_nD(B_1-x)<\mathrm{e}^{K^2}\right]\nonumber\\
&\qquad\leq\mathrm{e}^{-K} + \hat{P}_o^\theta\left(\{X_i: 0\leq i<n\}\not\subset\bar{B}_1\right)\label{sirabunda}\\
&\qquad\qquad+E_o\left[\exp\{\langle\theta,X_{n}\rangle-n\log\phi(\theta)\},\{X_i: 0\leq i<n\}\subset\bar{B}_1,\delta_nD(B_1-x)<\mathrm{e}^{K^2}\right].\nonumber
\end{align}
The first term in (\ref{sirabunda}) is small when $K$ is large. Donsker's invariance principle ensures that the second term can be made arbitrarily small (uniformly in $n$) by choosing $C_1$ sufficiently large.

Let us focus on the third term in (\ref{sirabunda}). For any sequence $(A_n)_{n\geq1}$ of natural numbers,
\begin{align}
&E_o[\exp\{\langle\theta,X_{n}\rangle-n\log\phi(\theta)\},\{X_i: 0\leq i<n\}\subset\bar{B}_1,\delta_nD(B_1-x)<\mathrm{e}^{K^2}]\nonumber\\
&\qquad\leq E_o[\exp\{\langle\theta,X_{n}\rangle-n\log\phi(\theta)\},\{X_i: 0\leq i<n\}\subset\bar{B}_1,\delta_n\!\!\!\!\!\sum_{{(s,i)\in B_1-x}\atop{(s,i)\neq X_i}}\!\!\!a(\theta,(s,i))<-A_n]\nonumber\\
&\qquad\quad+E_o[\exp\{\langle\theta,X_{n}\rangle-n\log\phi(\theta)\},\{X_i: 0\leq i<n\}\subset\bar{B}_1,\delta_n\sum_{i=0}^{n-1}a(\theta,X_i)<\mathrm{e}^{K^2}+A_n]\nonumber\\
&\qquad\leq\sum_{x_1,\ldots,x_{n-1}}\!\!\!\!\mathbb{E}[E_o^\omega\left[\exp\{\langle\theta,X_{n}\rangle-n\log\phi(\theta)\}, X_i=x_i\ \forall i<n\right],\delta_n\!\!\!\!\!\sum_{{(s,i)\in B_1-x}\atop{(s,i)\neq x_i}}\!\!\!a(\theta,(s,i))<-A_n]\nonumber\\
&\qquad\quad+E_o[\exp\{\langle\theta,X_{n}\rangle-n\log\phi(\theta)\},\delta_n\sum_{i=0}^{n-1}a(\theta,X_i)<\mathrm{e}^{K^2}+A_n]\nonumber\\
&\qquad=\sum_{x_1,\ldots,x_{n-1}}\!\!\!\!E_o\left[\exp\{\langle\theta,X_{n}\rangle-n\log\phi(\theta)\}, X_i=x_i\ \forall i<n\right]\times\mathbb{P}(\delta_n\!\!\!\!\!\sum_{{(s,i)\in B_1-x}\atop{(s,i)\neq x_i}}\!\!\!a(\theta,(s,i))<-A_n)\label{indepeq}\\
&\qquad\quad+E_o[\exp\{\langle\theta,X_{n}\rangle-n\log\phi(\theta)\},\delta_n\sum_{i=0}^{n-1}a(\theta,X_i)<\mathrm{e}^{K^2}+A_n]\nonumber\\
&\qquad\leq\max_{x_1,\ldots,x_{n-1}}\mathbb{P}(\delta_n\!\!\!\!\!\sum_{{(s,i)\in B_1-x}\atop{(s,i)\neq x_i}}\!\!\!a(\theta,(s,i))<-A_n)\nonumber\\
&\qquad\quad+E_o[\exp\{\langle\theta,X_{n}\rangle-n\log\phi(\theta)\},\delta_n\sum_{i=0}^{n-1}a(\theta,X_i)<\mathrm{e}^{K^2}+A_n]\nonumber\\
&\qquad\leq A_n^{-2}\delta_n^22C_1n^{3/2}\mathbb{E}\left[a(\theta,(0,0))^2\right]\label{chebyineq}\\
&\qquad\quad+E_o[\exp\{\langle\theta,X_{n}\rangle-n\log\phi(\theta)\},\delta_n\sum_{i=0}^{n-1}a(\theta,X_i)<\mathrm{e}^{K^2}+A_n].\nonumber
\end{align}
Here, (\ref{indepeq}) follows from the independence assumption on the environment, and (\ref{chebyineq}) is an application of Chebyshev's inequality. Since $\delta_n=C_1^{-1/2}n^{-3/4}$, the first term in (\ref{chebyineq}) goes to zero as $n\to\infty$ if $A_n\to\infty$. 

Choose $A_n$ such that $A_n\to\infty$ and $A_n=o(n^{1/4})$ as $n\to\infty$. For any $\mu\in\mathbb{R}^+$, the second term in (\ref{chebyineq}) is equal to
\begin{align}
&E_o[\exp\{\langle\theta,X_{n}\rangle-n\log\phi(\theta)\},\delta_n\sum_{i=0}^{n-1}(a(\theta,X_i)-\mu)<\mathrm{e}^{K^2}+A_n-\mu n\delta_n]\nonumber\\
&\quad\leq M_nE_o[\exp\{\langle\theta,X_{n}\rangle-n\log\phi(\theta)\}\sum_{i=0}^{n-1}(a(\theta,X_i)-\mu)^2]\label{crossterms}\\
&\quad\quad+M_nE_o[\exp\{\langle\theta,X_{n}\rangle-n\log\phi(\theta)\}\sum_{i\neq j}(a(\theta,X_i)-\mu)(a(\theta,X_j)-\mu)]\nonumber
\end{align}
by Chebyshev's inequality, where $M_n=\left(\frac{\delta_n}{\mu n\delta_n-A_n-\mathrm{e}^{K^2}}\right)^2=O(n^{-2})$.

By the FKG inequality (c.f.\ \cite{GrimmettPercolation}),
\begin{align}
E_o\left[\exp\{\langle\theta,X_1\rangle-\log\phi(\theta)\}a(\theta,(0,0))\right]&=\mathbb{E}\left[E_o^\omega\left[\exp\{\langle\theta,X_1\rangle-\log\phi(\theta)\}\right]\,a(\theta,(0,0))\right]\label{FKG}\\
&>\mathbb{E}\left[E_o^\omega\left[\exp\{\langle\theta,X_1\rangle-\log\phi(\theta)\}\right]\right]\mathbb{E}\left[a(\theta,(0,0))\right]=0\nonumber
\end{align}
since $E_o^\omega\left[\exp\{\langle\theta,X_1\rangle-\log\phi(\theta)\}\right]$ and $a(\theta,(0,0))$ are easily checked to be either both strictly increasing functions (when $\langle\theta,e_1\rangle>0$) or both strictly decreasing functions (when $\langle\theta,e_1\rangle<0$) of the random variable $\pi((0,0),(1,1))$. If we choose
\begin{equation}
\label{eq-150310a}
\mu=E_o\left[\exp\{\langle\theta,X_1\rangle-\log\phi(\theta)\}
a(\theta,(0,0))\right],
\end{equation}
then the second term in (\ref{crossterms}) vanishes by the independence assumption on the environment. Finally, observe that the first term in (\ref{crossterms}) is equal to
$$nM_nE_o\left[\exp\{\langle\theta,X_1\rangle-\log\phi(\theta)\}(a(\theta,(0,0))-\mu)^2\right]=O(n^{-1}).\qedhere$$
\end{proof}

\subsection{Proof of Theorem \ref{QneqAstyeni}}\label{ayyy}

Let us recall a few points regarding the arguments in Subsections \ref{tiltsubsec} -- \ref{finsubsec}. There, since $d=1+1$, the volume of $B_1$ (defined in (\ref{bicey})) is $O(n^{3/2})$. The variance of $D(B_1)$ (c.f.\ (\ref{abbrev})) scales like that volume. We take $\delta_n=O(n^{-3/4})$ so that the variance of $\delta_nD(B_1)$ is $O(1)$. With this choice, $n\delta_n\to\infty$ as $n\to\infty$. As we saw, this fact is crucial in the proof of Theorem \ref{QneqAst}.

In this subsection, we will assume that $d=2+1$. For every $m\geq1$, $1\leq j\leq m$, $\theta\notin sp\{e_3\}$, $C_1\geq1$ and $Y=(y_1,\ldots,y_m)\in(\mathbb{V}_3)^m$, we define 
\begin{equation}\label{biceyyeni}
B_j:=\{(r,k): r\in\mathbb{Z}^2, (j-1)n\leq k<jn, \left|(r,k)-\lfloor k\xi(\theta)\rfloor-\sqrt{n}y_{j-1}\right|\leq C_1\sqrt{n}\},
\end{equation} similar to (\ref{bicey}). Note that the volume of this new set is $O(n^2)$. If we were to define $D(B_1)$ 
analogously to (\ref{abbrev}), then we would have to take $\delta_n\leq O(n^{-1})$ in order to make the variance of $\delta_nD(B_1)$ not grow with $n$,
in which case $n\delta_n$ remains bounded. 
Hence, the proof for $d=1+1$ does not directly carry over to the case $d=2+1$.

To resolve this issue, following
\cite{Lacoin},
we will modify the proof by redefining $D(B_1)$ and $\delta_n$. (We will continue using these names so that we can refer to the parts of Subsections \ref{tiltsubsec} -- \ref{finsubsec} that carry over word by word.) The modification
amounts essentially to using a tilting that is quadratic, instead
of linear, in the local drift, as follows.

For every $(r,k)$ and $(s,l)$ with $r,s\in\mathbb{Z}^2$ and $k,l\geq1$, let 
\begin{equation}\label{quadtilt}
V((r,k),(s,l)):=\frac{1}{|k-l|}\one_{\{\left|(s,l)-(r,k)-\lfloor (l-k)\xi(\theta)\rfloor\right|< C_2\sqrt{|k-l|}\}}
\end{equation}
if $k\neq l$, and set it to be equal to zero if $k=l$. Here, the constant $C_2\geq1$ will be determined later. 
Given any $n$ integer and
$x_1,\ldots,x_n\in\mathbb{Z}^3$ with 
$\langle x_k,e_3\rangle=k$, it follows easily from (\ref{quadtilt}) that
\begin{align}
\mbox{\rm for any $s\in\mathbb{Z}^2$, $l\in\{1,\ldots,n\}$},&\qquad\sum_{k=1}^nV(x_k,(s,l))\leq2\log n,\nonumber\\
\sum_{k=1}^n\sum_{(s,l)\in B_1}\!\!\!\!V(x_k,(s,l))&\leq\sum_{{1\leq k,l\leq n}\atop{k\neq l}}\frac{1}{|k-l|}\left(2C_2\sqrt{|k-l|}\right)^2\leq4C_2^2n^2,\nonumber\\
\sum_{(s,l)\in B_1}\left(\sum_{k=1}^nV(x_k,(s,l))\right)^2&=\left(\max_{(s',l')}\sum_{k=1}^nV(x_k,(s',l'))\right)\sum_{(s,l)\in B_1}\sum_{k=1}^nV(x_k,(s,l))\nonumber\\&\leq(2\log n)(4C_2^2n^2)=8C_2^2n^2\log n,\quad\mbox{and}\label{coni}\\
\sum_{{(r,k)\in B_1,}\atop{(s,l)\in B_1}}V((r,k),(s,l))^2&\leq\sum_{k=1}^n(2C_1\sqrt{n})^2\sum_{l=1}^n\frac{\one_{\{k\neq l\}}}{|k-l|^2}(2C_2\sqrt{|k-l|})^2\nonumber\\
&=16C_1^2C_2^2n\sum_{{1\leq k,l\leq n}\atop{k\neq l}}\frac1{|k-l|}
\leq32C_1^2C_2^2n^2\log n.\label{boni}
\end{align}

Recall the tilted law $\hat{P}_o^\theta$ introduced in the proof
of Lemma \ref{lessthanhalf}.
\begin{lemma}\label{gereklilemmabir}
For any $\delta>0$, there exists a $C_2\geq1$ such that $\nu(n,X):=\sum_{1\leq i,j\leq n}V(X_i,X_j)$ 
satisfies
$$\hat{P}_o^{\theta}(\nu(n,X)<n\log (n-1)/2)\leq\delta$$ for every $n\geq2$.
\end{lemma}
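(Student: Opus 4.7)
The plan is to prove the lemma by the second moment method. Under $\hat P_o^\theta$, the walk $(X_k)_{k\ge 0}$ has i.i.d.\ bounded increments with mean $\xi(\theta)$. Since $V((r,k),(s,l))$ depends only on the displacement $(s,l)-(r,k)$, stationarity of increments gives
$$E_{\hat P_o^\theta}\bigl[V(X_i,X_j)\bigr] = \frac{p_{|j-i|}}{|j-i|}, \qquad p_m := \hat P_o^\theta\bigl(|X_m-\lfloor m\xi(\theta)\rfloor|<C_2\sqrt{m}\bigr).$$
A Chebyshev bound on the spatial component of $X_m-m\xi(\theta)$ shows that, once $C_2$ is taken larger than a constant depending only on $\xi(\theta)$ and the step bound, $p_m \ge 1 - O(\sigma^2/C_2^2)$ uniformly in $m\ge 1$; hence $p^* := \inf_{m\ge 1} p_m$ can be made arbitrarily close to $1$ by enlarging $C_2$, and in particular I will assume $p^*>1/2$.

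Summing the expectation then gives
$$E_{\hat P_o^\theta}[\nu(n,X)] \ge 2p^*\bigl(nH_{n-1}-(n-1)\bigr), \qquad H_{n-1}:=\sum_{d=1}^{n-1}\tfrac{1}{d},$$
which exceeds $n\log(n-1)$ for all $n\ge n_1 = n_1(p^*)$. The core of the argument is the variance estimate. The crucial structural observation is that $V(X_i,X_j)$ depends only on the increments of the walk on the index interval $(i,j]$; consequently, $V(X_i,X_j)$ and $V(X_k,X_l)$ are independent whenever the two index intervals are disjoint, and only overlapping pairs contribute to the variance. Writing $\nu = 2\sum_{i<j} V(X_i,X_j)$ and using $V \le 1/|i-j|$,
$$\mathrm{Var}_{\hat P_o^\theta}\bigl(\nu(n,X)\bigr) \le 4\!\!\sum_{\substack{i<j,\,k<l\\(i,j]\cap(k,l]\ne\emptyset}}\!\!E\bigl[V(X_i,X_j)V(X_k,X_l)\bigr] \le 4\!\!\sum_{\mathrm{overlap}}\frac{1}{(j-i)(l-k)}.$$
For fixed gaps $a=j-i$ and $b=l-k$, the number of overlapping ordered pairs $((i,j),(k,l))$ is bounded by $n(a+b)$, so the overlap sum is at most $n\sum_{a,b=1}^{n-1}(1/a + 1/b) = 2n(n-1)H_{n-1} = O(n^2\log n)$.

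Chebyshev's inequality then yields
$$\hat P_o^\theta\bigl(\nu(n,X)<E[\nu]/2\bigr) \le \frac{4\,\mathrm{Var}(\nu)}{(E[\nu])^2} = O\!\left(\frac{1}{(p^*)^2\log n}\right),$$
which tends to $0$. Thus there exists $n_2=n_2(\delta,C_2)$ such that the right side is $\le\delta$ for every $n\ge n_2$, and combined with $E[\nu]/2\ge n\log(n-1)/2$ for $n\ge n_1$, this establishes the desired bound for $n\ge\max(n_1,n_2)$. The remaining finite range $2\le n<\max(n_1,n_2)$ is handled by enlarging $C_2$ once more (which only improves $p^*$ and the asymptotic bound): taking $C_2$ so large that $p_m\ge 1-2\delta/N^2$ for all $m\le N:=\max(n_1,n_2)$, a union bound over the at most $\binom{n}{2}$ events $A_{ij}=\{|X_j-X_i-\lfloor (j-i)\xi(\theta)\rfloor|<C_2\sqrt{|j-i|}\}$ forces all of them to hold except on a set of probability $\le\delta$; on that event $\nu(n,X)=2(nH_{n-1}-(n-1))$, which exceeds $n\log(n-1)/2$ for every $n\ge 2$ (the case $n=2$ being trivial since the threshold is $0$). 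The main technical point is the variance estimate, which works precisely because the weight $1/|k-l|$ in $V$ and the $C_2\sqrt{|k-l|}$ tube width together give a variance of order $n^2\log n$ rather than $(n\log n)^2$, exactly what Chebyshev needs.
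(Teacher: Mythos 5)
Your proof is correct, but it takes a genuinely different route from the paper's. You run a full second-moment argument: lower-bound the mean via the uniform tube probability $p^*$, observe that $V(X_i,X_j)$ is a function of the increments over $(i,j]$ only, so that pairs with disjoint increment intervals are uncorrelated under $\hat P_o^\theta$, bound the overlapping contributions by $O(n^2\log n)$, and finish with Chebyshev plus a separate union-bound patch (after enlarging $C_2$ once more) for the finitely many small $n$. The paper avoids the variance entirely: since $V(X_i,X_j)\leq 1/|i-j|$ pointwise, one has the deterministic bound $\nu(n,X)\leq H(n):=\sum_{i\neq j}1/|i-j|$, so $H(n)-\nu(n,X)\geq 0$; making each tube probability at least $1-\delta/2$ (uniformly in $i\neq j$, by the same kind of estimate you use for $p^*$) gives $\hat E_o^\theta[H(n)-\nu(n,X)]\leq (\delta/2)H(n)$, and Markov's inequality applied to this nonnegative variable yields $\hat P_o^\theta(\nu(n,X)<H(n)/2)\leq\delta$ at once for every $n\geq 2$, with $H(n)\geq n\log(n-1)$ concluding. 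What each approach buys: the paper's first-moment trick is shorter, needs no independence structure of the increments beyond the marginal tube estimate, and requires no case split in $n$; your argument is longer but gives more, namely genuine concentration of $\nu(n,X)$ around its mean with an explicit $O(1/((p^*)^2\log n))$ error, and it is the natural route if one did not notice the deterministic upper bound $\nu\leq H(n)$. Your independence observation and the $n(a+b)/(ab)$ overlap count are sound, and your remark that enlarging $C_2$ only improves the earlier estimates correctly handles the order of quantifiers; the only cosmetic caveat is the symmetry $V(X_i,X_j)=V(X_j,X_i)$ used in writing $\nu=2\sum_{i<j}V(X_i,X_j)$, which holds up to the tie-breaking convention in $\lfloor\cdot\rfloor$ and in any case can be bypassed by summing over ordered pairs.
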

\begin{proof}
For any realization of $X=(X_i)_{i\geq1}$, $$\nu(n,X)\leq\sum_{{1\leq i,j\leq n}\atop{i\neq j}}\frac{1}{|i-j|}=:H(n).$$
Observe that $$\hat{E}_o^{\theta}[\nu(n,X)]=\sum_{1\leq i,j\leq n}\hat{E}_o^{\theta}[V(X_i,X_j)]=\sum_{{1\leq i,j\leq n}\atop{i\neq j}}\frac{1}{|i-j|}\hat{P}_o^{\theta}(|X_i-X_j-\lfloor (i-j)\xi(\theta)\rfloor|< C_2\sqrt{|i-j|}).$$
When $C_2$ is sufficiently large, the CLT implies that $$\hat{P}_o^{\theta}(|X_i-X_j-\lfloor (i-j)\xi(\theta)\rfloor|< C_2\sqrt{|i-j|})\geq(1-\delta/2)$$ for any $i\neq j$. Therefore, $\hat{E}_o^{\theta}[\nu(n,X)]\geq(1-\delta/2)H(n)$. Applying Markov's inequality, we see that $$\hat{P}_o^{\theta}(\nu(n,X)<H(n)/2)=\hat{P}_o^{\theta}(H(n)-\nu(n,X)>H(n)/2)\leq\delta.$$ This implies the desired result 
since 
$H(n)\geq n\log (n-1)$.
\end{proof}
For any $\theta\in\mathbb{R}^3$ and $x\in\mathbb{Z}^3$, define $a(\theta,x):=\langle\theta,v(T_x\omega)-\xi_o\rangle$ as before, where $v(\omega)=\sum_{z\in\mathcal{R}}{\pi(0,z)}z$.
\begin{lemma}\label{gereklilemmaiki}
There exists a $\beta>0$ such that 
$$\mu:=E_o\left[\exp\{\langle\theta,X_1\rangle-\log\phi(\theta)\}
a(\theta,(0,0,0))\right]>0$$ 
whenever $\mathrm{dist}(\theta,sp\{e_3\})\in(0,\beta)$.
\end{lemma}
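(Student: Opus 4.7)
The plan is to exploit the special structure of $\mathcal{R}_{st}$ to reduce $\mu$ to a function of the transverse component $\theta_\perp := \theta - \langle\theta,e_3\rangle e_3 \in \mathbb{R}^2$, and then to Taylor expand around $\theta_\perp = 0$. First, I would note that since $\langle z,e_3\rangle = 1$ for every $z \in \mathcal{R}_{st}$, one has $\phi(\theta) = e^{\theta_3}\psi(\theta_\perp)$ with $\psi(\theta_\perp) := \sum_{z} q(z) e^{\langle\theta_\perp,z_\perp\rangle}$, so the $e^{\theta_3}$ factor cancels in $\exp\{\langle\theta,X_1\rangle - \log\phi(\theta)\}$. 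Likewise, $v_3(\omega) = \sum_z \pi(0,z) = 1 = \langle\xi_o,e_3\rangle$, so $a(\theta,0) = \langle\theta_\perp, v_\perp(\omega) - \xi_{o,\perp}\rangle$. Consequently $\mu$ depends only on $\theta_\perp$, and it suffices to prove $\mu > 0$ for all $\theta_\perp$ with $0 < |\theta_\perp| < \beta$.

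Next, using the identities $\mathbb{E}[\exp\{\langle\theta,X_1\rangle - \log\phi(\theta)\}] = 1$ and $\mathbb{E}[a(\theta,0)] = 0$ (the latter because $\xi_o = \mathbb{E}[v(\omega)]$ for the annealed space-time walk), I would rewrite
\[
\mu = \mathrm{Cov}\bigl(F(\theta_\perp,\omega),\, a(\theta_\perp,0)\bigr),
\qquad F(\theta_\perp,\omega) := \frac{\sum_z \pi(0,z)\, e^{\langle\theta_\perp,z_\perp\rangle}}{\psi(\theta_\perp)}.
\]
A direct differentiation at $\theta_\perp = 0$ yields the uniform-in-$\omega$ expansion
\[
F(\theta_\perp,\omega) = 1 + \langle\theta_\perp,\, v_\perp(\omega) - \xi_{o,\perp}\rangle + R(\theta_\perp,\omega),
\qquad |R(\theta_\perp,\omega)| \leq C|\theta_\perp|^2,
\]
with $C$ depending only on $\max_{z\in\mathcal{R}_{st}} |z_\perp| = 1$. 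Since $a(\theta_\perp,0) = \langle\theta_\perp, v_\perp - \xi_{o,\perp}\rangle$ exactly and $\|a(\theta_\perp,0)\|_2 = O(|\theta_\perp|)$, substituting produces
\[
\mu(\theta) = \mathrm{Var}\bigl(\langle\theta_\perp, v_\perp - \xi_{o,\perp}\rangle\bigr) + O(|\theta_\perp|^3) = \theta_\perp^T \Sigma_\perp\, \theta_\perp + O(|\theta_\perp|^3),
\]
where $\Sigma_\perp$ is the $2\times 2$ covariance matrix of $v_\perp(\omega)$.

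Finally, under the (implicit) non-degeneracy assumption that $\Sigma_\perp$ is positive definite, the leading term is bounded below by $\lambda_{\min}(\Sigma_\perp)|\theta_\perp|^2$ with $\lambda_{\min} > 0$, so the lemma follows by choosing $\beta$ small enough that the cubic remainder is strictly dominated. The only real obstacle is the positive definiteness of $\Sigma_\perp$: this amounts to requiring that the random vector $\bigl(\pi(0,e_1+e_3) - \pi(0,-e_1+e_3),\; \pi(0,e_2+e_3) - \pi(0,-e_2+e_3)\bigr)$ not be supported on an affine line, a non-degeneracy hypothesis already needed for Theorem \ref{QneqAstyeni} to deliver a non-empty open set $\mathcal{G}_{st}$. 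Given this, everything else is a routine second-order expansion, with the uniformity of the error bounds guaranteed by $|z| \leq \sqrt{2}$ on $\mathcal{R}_{st}$.
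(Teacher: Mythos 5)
Your argument is correct and is essentially the paper's own: the paper likewise reduces to the transverse component of $\theta$ and performs a second-order expansion at $0$ (phrased as comparing the Hessians of $F(\theta)=\mathbb{E}\{E_o^\omega[\mathrm{e}^{\langle\theta,X_1\rangle}]E_o^\omega[\langle\theta,X_1\rangle]\}$ and $G(\theta)=\phi(\theta)\langle\theta,\xi_o\rangle$), and the positive quadratic term you need, $\theta_\perp^T\Sigma_\perp\theta_\perp>0$, is exactly the paper's strict-Schwarz step, i.e.\ positivity of $\mathrm{Var}_{\mathbb{P}}(\langle v(\cdot),u\rangle)$ for transverse unit vectors $u$. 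The non-degeneracy of $\Sigma_\perp$ that you flag is used implicitly in the paper as well (there attributed, somewhat loosely, to uniform ellipticity), so your proposal has no gap beyond what the paper's proof itself assumes.
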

\begin{proof}
For every $\theta\notin sp\{e_3\}$, let $$F(\theta):=\mathbb{E}\{E_o^\omega[\mathrm{e}^{\langle\theta,X_1\rangle}]E_o^\omega[\langle\theta,X_1\rangle]\}\quad\mbox{and}\quad G(\theta):=E_o[\mathrm{e}^{\langle\theta,X_1\rangle}]E_o[\langle\theta,X_1\rangle]=\phi(\theta)\langle\theta,\xi_o\rangle.$$
Our aim is to show that $F(\theta)>G(\theta)$.

Write $\theta=ce_3+\theta'$ for some $c\in\mathbb{R}$ and $\theta'\in\mathbb{R}^3$ such that $\langle\theta',e_3\rangle=0$. Then, $F(\theta)=\mathrm{e}^cF(\theta')+c\mathrm{e}^c\phi(\theta')$ and $G(\theta)=\mathrm{e}^cG(\theta')+c\mathrm{e}^c\phi(\theta')$. Therefore, it suffices to show that $F(\theta')>G(\theta')$.

Clearly, we have $$\left.\nabla F(\theta)\right|_{\theta=0}=\left.\nabla G(\theta)\right|_{\theta=0}=E_o[X_1]=\xi_o.$$
Also, for any $u,u'\in\mathbb{R}^3$,
with $D^2F$ denoting the Hessian of $F$,
$$\left.\langle u,D^2F(\theta)u'\rangle\right|_{\theta=0}=2\mathbb{E}\{E_o^\omega[\langle X_1,u\rangle]E_o^\omega[\langle X_1,u'\rangle]\}$$
and
$$\left.\langle u,D^2G(\theta)u'\rangle\right|_{\theta=0}=2E_o[\langle X_1,u\rangle]E_o[\langle X_1,u'\rangle]=2\langle\xi_o,u\rangle\langle\xi_o,u'\rangle.$$
By Schwarz' inequality (which is strict since the walk is uniformly elliptic in the directions other than $e_3$), $$\inf_{{|u|=1}\atop{\langle u,e_3\rangle=0}}\left(\left.\langle u,D^2F(\theta)u\rangle\right|_{\theta=0}-\left.\langle u,D^2G(\theta)u\rangle\right|_{\theta=0}\right)>0.$$
Finally, Taylor's theorem implies the existence of a $\beta>0$ such that $F(\theta')>G(\theta')$ whenever $|\theta'|\in(0,\beta)$.
\end{proof}

Now, we are ready to give the new definition of $D(B_1)$ which is suitable for $d=2+1$. For any $\theta\in\mathbb{R}^3$ such that $\mathrm{dist}(\theta,sp\{e_3\})\in(0,\beta)$ (with $\beta$ as in Lemma \ref{gereklilemmaiki}), let
\begin{equation}\label{yeniD}
D(B_1):=\sum_{{(r,k)\in B_1,}\atop{(s,l)\in B_1}}V((r,k),(s,l))a(\theta,(r,k))a(\theta,(s,l)).
\end{equation}
Note that $V((\cdot,k),(\cdot,k))=0$ for every $1\leq k\leq n$. Since $\mathbb{E}[a(\theta,0)]=0$, it follows from the independence of the environment that $\mathbb{E}[D(B_1)]=0$. Also, $\mathbb{E}[D(B_1)^2]\leq1024|\theta|^4C_1^2C_2^2n^2\log n$ by (\ref{boni}) and the fact that $|a(\theta,0)|\leq 2|\theta|$.

If we choose
$$\delta_n:=n^{-1}(\log n)^{-1/2},$$ then the variance of $\delta_nD(B_1)$ is $O(1)$. Once we have this fact, the arguments in Subsections \ref{tiltsubsec} -- \ref{finsubsec} carry over until (\ref{lastsuff}). So, it suffices to show that $E_o[\exp\{\langle\theta,X_{n}\rangle-n\log\phi(\theta)\},\delta_nD(B_1-x)<\mathrm{e}^{K^2}]$ is small for all $x\in J_o$ when $n$ 
and $K$ are large. In the estimate below, we will (WLOG) take $x=0$.

Let $\gamma=1/2$, and observe that
\begin{align}
&E_o[\exp\{\langle\theta,X_{n}\rangle-n\log\phi(\theta)\},\delta_nD(B_1)<\mathrm{e}^{K^2}]\nonumber\\
&\qquad\leq E_o[\exp\{\langle\theta,X_{n}\rangle-n\log\phi(\theta)\},\nu(n,X)\geq\gamma n\log (n-1), \delta_nD(B_1)<\mathrm{e}^{K^2}]\nonumber\\
&\qquad\quad+E_o\left[\exp\{\langle\theta,X_{n}\rangle-n\log\phi(\theta)\},\nu(n,X)<\gamma n\log (n-1)\right]\nonumber\\
&\qquad=E_o[\exp\{\langle\theta,X_{n}\rangle-n\log\phi(\theta)\},\nu(n,X)\geq\gamma n\log (n-1),\nonumber\\
&\hspace{1.7cm}\delta_n(D(B_1)-\mu^2\nu(n,X))<\mathrm{e}^{K^2}-\mu^2\delta_n\nu(n,X)]
+\hat{P}_o^{\theta}(\nu(n,X)<\gamma n\log (n-1))\nonumber\\
&\qquad\leq M_nE_o[\exp\{\langle\theta,X_{n}\rangle-n\log\phi(\theta)\}(D(B_1)-\mu^2\nu(n,X))^2,\nu(n,X)\geq\gamma n\log (n-1)]\label{cebican}\\
&\qquad\quad+\hat{P}_o^{\theta}(\nu(n,X)<\gamma n\log (n-1))\nonumber\\
&\qquad\leq M_nE_o[\exp\{\langle\theta,X_{n}\rangle-n\log\phi(\theta)\}(D(B_1)-\mu^2\nu(n,X))^2]
+\hat{P}_o^{\theta}(\nu(n,X)<\gamma n\log (n-1)).\label{kahan}
\end{align}
Here, (\ref{cebican}) follows from 
the elementary inequality $\one_{a<b}\leq a^2/b^2$ with
$a=\delta_n(D(B_1)-\mu^2\nu(n,X))$ and $b=\mathrm{e}^{K^2}-\mu^2\delta_n\nu(n,X)<0$,
and
\begin{equation}\label{emen}
M_n=\left(\frac{\delta_n}{\mu^2\delta_n\gamma n\log (n-1)
-\mathrm{e}^{K^2}}\right)^2.
\end{equation}Choose $C_2$ sufficiently large so that the second term in (\ref{kahan}) is small for all $n\geq2$ by Lemma \ref{gereklilemmabir}.

It remains to control the first term in (\ref{kahan}). Note that
\begin{align*}
&D(B_1)-\mu^2\nu(n,X)\\
&\qquad=2\mu\sum_{k=1}^n\sum_{(s,l)\in B_1}V(X_k,(s,l))(a(\theta,(s,l))-\mu\one_{\{X_l=(s,l)\}})\\
&\qquad\quad+\sum_{{(r,k)\in B_1,}\atop{(s,l)\in B_1}}V((r,k),(s,l))(a(\theta,(r,k))-\mu\one_{\{X_k=(r,k)\}})(a(\theta,(s,l))-\mu\one_{\{X_l=(s,l)\}}),
\end{align*}
and
\begin{align}
&E_o[\exp\{\langle\theta,X_{n}\rangle-n\log\phi(\theta)\}(D(B_1)-\mu^2\nu(n,X))^2]\nonumber\\
&\qquad\leq8\mu^2E_o\left[\exp\{\cdots\}\left(\sum_{k=1}^n\sum_{(s,l)\in B_1}V(X_k,(s,l))(a(\theta,(s,l))-\mu\one_{\{X_l=(s,l)\}})\right)^2\right]\nonumber\\
&\qquad\quad+2E_o\left[\exp\{\cdots\}\left(\sum_{{(r,k)\in B_1,}\atop{(s,l)\in B_1}}\!\!\!\!V((r,k),(s,l))(a(\theta,(r,k))-\mu\one_{\{X_k=(r,k)\}})(a(\theta,(s,l))-\mu\one_{\{X_l=(s,l)\}})\right)^2\right]\nonumber\\
&\qquad=:8\mu^2\mathfrak{E}_1 + 2\mathfrak{E}_2\label{yervarmi}
\end{align}
by the inequality $(a+b)^2\leq 2(a^2+b^2)$.

One should note at this stage that in fact, even 
though $\mu$ was chosen to equal the mean under the tilted measure
 of $a(\theta,0)$,
it is not necessarily the case
that the mean of
$D(B_1)-\mu^2 \nu(n,X)$ under that  tilted measure
vanishes. This makes the control of $\mathfrak{E}_i$ somewhat messy, involving 
a local CLT (Lemma \ref{localCLTrefined}).

We turn to the details of the computation.
$\mathfrak{E}_1$ can be written as a double sum over pairs $(s,l), (s',l')\in B_1$. If $(s,l)\neq(s',l')$, then it is clear from independence that this pair does not contribute to $\mathfrak{E}_1$ on the event $\{X_l\neq(s,l)\}\cup\{X_{l'}\neq(s',l')\}$. Therefore,
\begin{align}
\mathfrak{E}_1&=E_o\left[\exp\{\cdots\}\sum_{(s,l)\in B_1}\left(\sum_{k=1}^nV(X_k,(s,l))(a(\theta,(s,l))-\mu\one_{\{X_l=(s,l)\}})\right)^2\right]\nonumber\\
&\quad+\sum_{{k,k',l,l':}\atop{l\neq l'}}E_o\left[\exp\{\langle\theta,X_{n}\rangle-n\log\phi(\theta)\}V(X_k,X_l)V(X_{k'},X_{l'})(a(\theta,X_l)-\mu)(a(\theta,X_{l'})-\mu)\right]\nonumber\\
&=:E_o\left[\exp\{\cdots\}\sum_{(s,l)\in B_1}\left(\sum_{k=1}^nV(X_k,(s,l))(a(\theta,(s,l))-\mu\one_{\{X_l=(s,l)\}})\right)^2\right]+\sum_{{k,k',l,l':}\atop{l\neq l'}}\mathfrak{E}_1(k,k',l,l')\nonumber\\
&\leq(2|\theta|+\mu)^2E_o\left[\exp\{\cdots\}\sum_{(s,l)\in B_1}\left(\sum_{k=1}^nV(X_k,(s,l))\right)^2\right]+\sum_{{k,k',l,l':}\atop{l\neq l'}}\mathfrak{E}_1(k,k',l,l')\nonumber\\
&\leq(2|\theta|+\mu)^28C_2^2n^2\log n+\sum_{{k,k',l,l':}\atop{l\neq l'}}\mathfrak{E}_1(k,k',l,l')\label{sanatsanat}
\end{align}
by (\ref{coni}) and the fact that $|a(\theta,\cdot)|\leq2|\theta|$.

If $l'> \max(k,k',l)$, 
then $\mathfrak{E}_1(k,k',l,l')$ is equal to zero since we can condition on the path up to $l'$ and use the fact that, for any $(x_i)_1^{l'}$, $$E_o\left[\exp\{\langle\theta,X_n-X_{l'}\rangle-(n-l')\log\phi(\theta)\}(a(\theta,X_{l'})-\mu)\,\left|\,(X_i)_1^{l'}=(x_i)_1^{l'}\right.\right]=0$$ by the definition of $\mu$, c.f.\ Lemma \ref{gereklilemmaiki}.

If $l<l'<k'<k$, then $V(X_k,X_l)$ and $V(X_{k'},X_{l'})$ create a slight complication since $X_k$ and $X_{k'}$ are not independent of $X_{l'+1}-X_{l'}$. Indeed, 
\begin{align*}
\mathfrak{E}_1(k,k',l,l')&=\sum_{{x_1,\ldots,x_{l'}}\atop{z\in\mathcal{R}}}E_o\left[\exp\{\cdots\}(a(\theta,X_l)-\mu)(a(\theta,X_{l'})-\mu), (X_i)_1^{l'}=(x_i)_1^{l'}, X_{l'+1}-X_{l'}=z\right]\\
&\qquad\qquad\times \hat E_o^\theta[V(X_k,x_l)V(X_{k'},x_{l'})\,|\,X_{l'+1}=x_{l'}+z],
\end{align*}
and the latter expectation depends on $z$. (If it were independent of $z$, we could simply take the sum over $z\in\mathcal{R}$ and conclude that $\mathfrak{E}_1(k,k',l,l')=0$.) However, for any $z,z'\in\mathcal{R}$,
\begin{align*}
&\left|\hat E_o^\theta[V(X_k,x_l)V(X_{k'},x_{l'})\,|\,X_{l'+1}=x_{l'}+z] - \hat E_o^\theta[V(X_k,x_l)V(X_{k'},x_{l'})\,|\,X_{l'+1}=x_{l'}+z']\right|\\
&\qquad\leq\!\!\sum_{{x_{k'}:}\atop{V(x_{k'},x_{l'})>0}}\!\!(k'-l')^{-1}\left|\hat P_o^\theta(X_{k'}=x_{k'}|X_{l'+1}=x_{l'}+z) - \hat P_o^\theta(X_{k'}=x_{k'}|X_{l'+1}=x_{l'}+z')\right|\\
&\qquad\qquad\qquad\qquad\times \hat E_o^\theta[V(X_k,x_l)\,|\,X_{k'}=x_{k'}]\\
&\qquad\leq 4C_2^2(k'-l')(k'-l')^{-1}O((k'-l')^{-3/2})(k-l)^{-1}
= O((k-l)^{-1}(k'-l')^{-3/2})
\end{align*}
uniformly in $(x_i)_1^{l'}$, c.f. Lemma \ref{localCLTrefined} (given below). Hence, 
$$\sum_{l<l'<k'<k}\mathfrak{E}_1(k,k',l,l')\leq O(n^2\log n).$$

It is easy to see that this technique works for $\mathfrak{E}_1(k,k',l,l')$ in all other cases, and we get $\mathfrak{E}_1\leq O(n^2\log n)$ by (\ref{sanatsanat}).

$\mathfrak{E}_2$ is a quadruple sum over $(r,k), (r',k'), (s,l), (s',l')\in B_1$ that is symmetric in $(r,k)$ and $(s,l)$ (as well as in $(r',k')$ and $(s',l')$). Recall that $V((r,k),(s,l))=0$ when $(r,k)=(s,l)$. If $(r,k)\notin\{(r',k'),(s',l')\}$, then it is clear from independence that there is no contribution to $\mathfrak{E}_2$ on the event $\{X_k\neq(r,k)\}$. The contribution from the complementary event can be estimated using Lemma \ref{localCLTrefined}, just like in the case of $\mathfrak{E}_1$. Putting everything together and recalling (\ref{boni}),  we see that
\begin{align*}
\mathfrak{E}_2&\leq2E_o\left[\exp\{\cdots\}\sum_{{(r,k)\in B_1,}\atop{(s,l)\in B_1}}\left(V((r,k),(s,l))(a(\theta,(r,k))-\mu\one_{\{X_k=(r,k)\}})(a(\theta,(s,l))-\mu\one_{\{X_l=(s,l)\}})\right)^2\right]\\
&\quad+O(n^2\log n)\\
&\leq2(2|\theta|+\mu)^4E_o\left[\exp\{\langle\theta,X_{n}\rangle-n\log\phi(\theta)\}\sum_{{(r,k)\in B_1,}\atop{(s,l)\in B_1}}V((r,k),(s,l))^2\right]+O(n^2\log n)\\
&\leq2(2|\theta|+\mu)^432C_1^2C_2^2n^2\log n+O(n^2\log n)\\
&\leq O(n^2\log n).
\end{align*}

Finally, $$M_nE_o[\exp\{\langle\theta,X_{n}\rangle-n\log\phi(\theta)\}(D(B_1)-\mu^2\nu(n,X))^2]\leq M_n(8\mu^2\mathfrak{E}_1 + 2\mathfrak{E}_2)\leq O((\log n)^{-1})$$
by (\ref{emen}) and (\ref{yervarmi}). This concludes the proof of Theorem \ref{QneqAstyeni}, apart from

\begin{lemma}\label{localCLTrefined}
For any $z,z'\in\mathcal{R}$,
$$\sup_{x\in\mathbb{Z}^3}|\hat P_z^\theta(X_m=x) - \hat P_{z'}^\theta(X_m=x)|\leq O(m^{-3/2})\quad\mbox{as }m\to\infty.$$
\end{lemma}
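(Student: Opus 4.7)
The plan is to observe that under the tilted law $\hat P^\theta$ the walk is a genuine random walk with i.i.d.\ increments drawn from $q^\theta(\cdot)$ on $\mathcal{R}_{st}$, so $\hat P_z^\theta(X_m=x) = p_m(x-z)$, where $p_m$ denotes the $m$-fold convolution of $q^\theta$. Setting $w:=z-z'$, which is bounded and satisfies $w_3=0$ since $z,z'\in\mathcal{R}_{st}$ both have third coordinate equal to $1$, the statement reduces to $\sup_y|p_m(y)-p_m(y+w)|\leq O(m^{-3/2})$. The third coordinate of the walk advances deterministically, so $p_m(y)$ is supported on $\{y_3=m\}$ and factors there as $p_m(y)=p^\perp_m(y_\perp)$, where $p^\perp_m$ is the $m$-step distribution of a genuine 2D random walk on $\mathbb{Z}^2$ with step distribution $q^\theta_\perp$ supported on $\{\pm e_1,\pm e_2\}$. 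It thus suffices to show
\begin{equation*}
\sup_{y_\perp\in\mathbb{Z}^2}\bigl|p^\perp_m(y_\perp)-p^\perp_m(y_\perp-w_\perp)\bigr|\leq O(m^{-3/2}).
\end{equation*}

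Fourier inversion gives
\begin{equation*}
\bigl|p^\perp_m(y_\perp)-p^\perp_m(y_\perp-w_\perp)\bigr|\leq \int_{[-\pi,\pi]^2}\bigl|\hat q^\theta_\perp(\phi)\bigr|^m\,\bigl|1-e^{i\phi\cdot w_\perp}\bigr|\,\frac{d\phi}{(2\pi)^2}.
\end{equation*}
Since the step set $\{\pm e_1,\pm e_2\}$ is bipartite, the maxima of $|\hat q^\theta_\perp|$ (with value $1$) are exactly the two points $(0,0)$ and $(\pi,\pi)$. Uniform ellipticity (inherited by $q^\theta_\perp$ from $q$) gives the Gaussian-type bound $|\hat q^\theta_\perp(\phi)|^m\leq e^{-cm|\phi|^2}$ in a neighborhood of $(0,0)$, and the identity $\hat q^\theta_\perp((\pi,\pi)+\eta)=-\hat q^\theta_\perp(\eta)$ gives $|\hat q^\theta_\perp((\pi,\pi)+\eta)|^m\leq e^{-cm|\eta|^2}$ near $(\pi,\pi)$. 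Outside small neighborhoods of these two points, $|\hat q^\theta_\perp|\leq\rho<1$ uniformly, so the corresponding contribution is exponentially small in $m$.

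Near $(0,0)$, combining $|1-e^{i\phi\cdot w_\perp}|\leq|\phi||w_\perp|$ with the Gaussian decay and switching to polar coordinates yields
\begin{equation*}
C|w_\perp|\int_0^\delta e^{-cm\rho^2}\rho\cdot\rho\,d\rho = O(m^{-3/2}).
\end{equation*}
Near $(\pi,\pi)$, the crucial point is a parity cancellation: for $z,z'\in\mathcal{R}_{st}$ one has $z_1+z_2\in\{\pm 1\}$, so $w_{\perp,1}+w_{\perp,2}\in\{-2,0,2\}$ is even, and hence $e^{i((\pi,\pi)+\eta)\cdot w_\perp}=e^{i\eta\cdot w_\perp}$, producing the same bound $|1-e^{i\phi\cdot w_\perp}|\leq|\eta||w_\perp|$ and another $O(m^{-3/2})$ contribution. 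I expect the only subtle point to be this parity observation at $(\pi,\pi)$: without it the integrand there would be $O(1)$ instead of $O(|\eta|)$ and the estimate would degrade to the useless $O(m^{-1})$; the rest is standard Fourier/local-CLT bookkeeping.
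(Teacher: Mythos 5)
Your proof is correct, but it follows a genuinely different route from the paper's. The paper disposes of the lemma in two lines by citing a refined local CLT (Theorem 22.1 of Bhattacharya and Ranga Rao): $\sup_x|\hat P_z^\theta(X_m=x)-\tfrac2m G^\theta(\cdot)|=O(m^{-3/2})$ over the admissible parity sublattice, after which the claim follows from the triangle inequality and $\sup_y|\nabla G^\theta(y)|<\infty$, since changing the starting point from $z$ to $z'$ only shifts the argument of $G^\theta$ by $O(m^{-1/2})$; the lattice periodicity is absorbed into the restriction on admissible $x$ and the factor $2/m$. You instead bound the difference directly by Fourier inversion, using the smoothing factor $|1-e^{i\phi\cdot w_\perp}|\leq|\phi|\,|w_\perp|$ near the two peaks of $|\hat q^\theta_\perp|$, Gaussian decay of $|\hat q^\theta_\perp|^m$ there, and a uniform bound strictly below $1$ elsewhere; the point you rightly flag as essential, namely that $w_{\perp,1}+w_{\perp,2}$ is even so the same cancellation is available at $(\pi,\pi)$, is exactly the place where the paper's parity restriction on $x$ reappears in your argument. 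All the ingredients check out for the tilted kernel: tilting preserves the support $\{\pm e_1,\pm e_2\}$ of the transverse step, uniform ellipticity keeps all four transverse probabilities bounded away from zero, so the transverse covariance is positive definite despite the nonzero tilted drift, and the identity $\hat q^\theta_\perp((\pi,\pi)+\eta)=-\hat q^\theta_\perp(\eta)$ holds because each step has odd transverse coordinate sum. What your approach buys is a self-contained elementary proof that never needs the Edgeworth-type expansion or an external reference and exploits the difference structure from the outset; what the paper's approach buys is brevity (modulo the citation) and a stronger pointwise Gaussian approximation, which, however, is not used elsewhere in the argument.
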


\begin{proof}
Let $G^\theta$ be the centered Gaussian density on $\mathbb{R}^2$ that has the same covariance with $(\langle X_1,e_1\rangle,\langle X_1,e_2\rangle)$ under $\hat P_o^\theta$. 
For any $z\in\mathcal{R}$, it is shown in Theorem 22.1 of \cite{BhattaRangaRao} that
$$\sup_{x}\left|\hat P_z^\theta(X_m=x) - \frac2{m}G^\theta\left(\frac{\langle x-z-m\xi(\theta),e_1\rangle}{\sqrt{m}}, \frac{\langle x-z-m\xi(\theta),e_2\rangle}{\sqrt{m}}\right)\right|\leq O(m^{-3/2})\quad\mbox{as }m\to\infty.$$
Here, the supremum is taken over all $x=(x_1,x_2,x_3)\in\mathbb{Z}^3$ such that $x_1+x_2+m+1$ is even and $x_3=m+1$. (Otherwise, $\hat P_z^\theta(X_m=x)$ is equal to zero.) Since $\sup_{y\in\mathbb{R}^2}|\nabla_y G^\theta(y)|<\infty$, the desired result follows from the triangle inequality.
\end{proof}

\section{Inequality of the rate functions for space-only RWRE}\label{SpaceOnlySection}

\subsection{Reducing to a fractional moment estimate}

Consider space-only RWRE on $\mathbb{Z}^d$ with $d\geq1$. Assume that the walk is non-nestling relative to the canonical basis vector $e_d$. By Jensen's inequality, the quenched and the averaged logarithmic moment generating functions
$$\Lambda_q(\theta):=\lim_{N\to\infty}\frac{1}{N}\log E_o^\omega\left[\exp\{\langle\theta,X_N\rangle\}\right]\quad\mbox{and}\quad\Lambda_a(\theta):=\lim_{N\to\infty}\frac{1}{N}\log E_o\left[\exp\{\langle\theta,X_N\rangle\}\right]$$
satisfy $\Lambda_q(\theta)\leq\Lambda_a(\theta)\leq|\theta|$ for every $\theta\in\mathbb{R}^d$.

Recall the definition of regeneration times $(\tau_n)_{n\geq0}$ (relative to $e_d$) given in Subsection \ref{regss}. Let $$\beta:=\inf\{i\geq0:\langle X_i,e_d\rangle<\langle X_o,e_d\rangle\}\in[1,\infty].$$
By the non-nestling assumption, there exist constants $c_2,c_3>0$ such that
\begin{equation}\label{serdar}
\mathrm{ess}\inf_{\mathbb{P}}P_o^\omega(\beta=\infty)\geq c_2\quad\mbox{and}\quad\mathrm{ess}\sup_{\mathbb{P}}P_o^\omega(\tau_1>n)\leq\mathrm{e}^{-c_3n}
\end{equation}
for every $n\geq1$, c.f.\ \cite{SznitmanSlowdown}. These bounds clearly imply that
\begin{equation}\label{yoruk}
\mathrm{ess}\sup_{\mathbb{P}}E_o^{\omega}[\left.\exp\{c\tau_1\}\right|\beta=\infty]\leq c_2^{-1}\mathrm{ess}\sup_{\mathbb{P}}E_o^{\omega}[\exp\{c\tau_1\}]=:H(c)<\infty
\end{equation} whenever $c<c_3$.

For every $c\in(0,c_3]$, introduce the set
\begin{equation}\label{metkelsi}
\mathcal{C}(c):=\{\theta\in\mathbb{R}^d:2|\theta|<c\}.
\end{equation}
\begin{lemma}\label{zikkim}
For every $\theta\in\mathcal{C}(c_3)$,
\begin{equation}\label{cevikuff}
E_o[\left.\exp\{\langle\theta,X_{\tau_1}\rangle-\Lambda_a(\theta)\tau_1\}\right|\beta=\infty]=1.
\end{equation}
$\Lambda_a$ is analytic on $\mathcal{C}(c_3)$. $\nabla\Lambda_a(0)=\xi_o$. The Hessian $\mathcal{H}_a$ of $\Lambda_a$ is positive definite on $\mathcal{C}(c_3)$. For every $c<c_3$ and $\theta\in\mathcal{C}(c)$, the smallest eigenvalue of $\mathcal{H}_a(\theta)$ is bounded from below by a positive constant that depends only on $c$ and the ellipticity constant $\kappa$ of the walk.
\end{lemma}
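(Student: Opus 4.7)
The plan is to characterize $\Lambda_a$ implicitly through the renewal structure associated with $\tau_1$, and then to read off analyticity, the gradient at the origin, and positivity of the Hessian from the resulting identity. Define
$$\Phi(\theta,\lambda):=E_o[\exp\{\langle\theta,X_{\tau_1}\rangle-\lambda\tau_1\}\,|\,\beta=\infty].$$
By \eqref{yoruk} and the trivial bound $|\langle\theta,X_{\tau_1}\rangle|\leq|\theta|\tau_1$, $\Phi$ is finite and jointly real-analytic in $(\theta,\lambda)$ on the open set $\{|\theta|-\lambda<c_3/2\}$. Moreover $\partial_\lambda\Phi<0$ strictly, and $\lambda\mapsto\Phi(\theta,\lambda)$ decreases from $+\infty$ to $0$, so for each $\theta\in\mathcal{C}(c_3)$ there is a unique $\lambda(\theta)\in\mathbb{R}$ solving $\Phi(\theta,\lambda(\theta))=1$.

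To establish \eqref{cevikuff} I must show $\lambda(\theta)=\Lambda_a(\theta)$. The input is the standard renewal structure (cf.\ \cite{SznitmanZerner99}): under $P_o$, the increments $\{(X_{\tau_{m+1}}-X_{\tau_m},\tau_{m+1}-\tau_m)\}_{m\geq1}$ are i.i.d., independent of the initial segment, with common law equal to that of $(X_{\tau_1},\tau_1)$ under $P_o(\,\cdot\,|\beta=\infty)$. Splitting $e^{\langle\theta,X_N\rangle}=e^{\langle\theta,X_{\tau_1}\rangle}\cdot e^{\langle\theta,X_N-X_{\tau_1}\rangle}$ on $\{\tau_1\leq N\}$ and iterating, the generating series $\sum_{N\geq0}z^N E_o[e^{\langle\theta,X_N\rangle}]$ factors through a geometric series in $\Phi(\theta,-\log z)$, and hence has radius of convergence exactly $e^{-\lambda(\theta)}$. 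Since this radius also equals $e^{-\Lambda_a(\theta)}$ by definition, \eqref{cevikuff} follows. The exponential tail \eqref{serdar} is used to check that the initial segment and the last incomplete excursion contribute finite, analytic factors on the relevant set of $z$.

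Once \eqref{cevikuff} is in hand, analyticity of $\Lambda_a$ on $\mathcal{C}(c_3)$ follows from the analytic implicit function theorem applied to $\Phi(\theta,\Lambda_a(\theta))=1$. Let $\tilde E_\theta$ denote expectation under the probability measure with Radon--Nikodym derivative $e^{\langle\theta,X_{\tau_1}\rangle-\Lambda_a(\theta)\tau_1}$ relative to $P_o(\,\cdot\,|\beta=\infty)$. Differentiating the identity once yields
$$\nabla\Lambda_a(\theta)=\frac{\tilde E_\theta[X_{\tau_1}]}{\tilde E_\theta[\tau_1]}=:\xi(\theta);$$
at $\theta=0$ the tilted measure is the untilted conditional law, and the ratio reduces to $E_o[X_{\tau_1}\,|\,\beta=\infty]/E_o[\tau_1\,|\,\beta=\infty]=\xi_o$ by the strong law applied to the i.i.d.\ regeneration increments. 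Differentiating a second time and substituting $\xi(\theta)$ for $\nabla\Lambda_a$ in the resulting expression gives the manifestly positive semi-definite formula
$$\mathcal{H}_a(\theta)=\frac{1}{\tilde E_\theta[\tau_1]}\,\tilde E_\theta\!\left[(X_{\tau_1}-\xi(\theta)\tau_1)(X_{\tau_1}-\xi(\theta)\tau_1)^{T}\right].$$

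The main technical step---and the part I expect to be most delicate---is the uniform lower bound on the smallest eigenvalue of $\mathcal{H}_a(\theta)$ over $\theta\in\mathcal{C}(c)$ for $c<c_3$. The bound \eqref{yoruk} combined with $2|\theta|<c$ forces the Radon--Nikodym derivative defining $\tilde P_\theta$ to be bounded above and below by constants depending only on $c$, so $\tilde E_\theta[\tau_1]$ is uniformly controlled and it suffices to bound $\inf_{|u|=1}\tilde E_\theta[\langle u,X_{\tau_1}-\xi(\theta)\tau_1\rangle^{2}]$ from below. The strategy is to exhibit, by uniform ellipticity, a finite collection of bounded-time events---each consisting of a specific prescribed sequence of at most $d+1$ initial steps followed by an immediate regeneration (the tail of which has probability at least $c_2$ by \eqref{serdar})---on which $(X_{\tau_1},\tau_1)$ takes affinely independent values in $\mathbb{Z}^d\times\mathbb{Z}_{\geq1}$, each with $P_o(\,\cdot\,|\beta=\infty)$-probability at least $\kappa^{d+1}c_2$. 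Because the tilting density is bounded below on these bounded-time events uniformly in $\theta\in\mathcal{C}(c)$, the affine independence translates into a strictly positive lower bound on the quadratic form in every unit direction $u$, with constants depending only on $c$ and $\kappa$.
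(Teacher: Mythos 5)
The paper does not actually prove this lemma: it simply cites Lemmas 6 and 12 of \cite{YilmazAveraged} (and equation (2.10) there) and those proofs run along exactly the lines you propose --- the renewal-structure identity $E_o[\exp\{\langle\theta,X_{\tau_1}\rangle-\Lambda_a(\theta)\tau_1\}\,|\,\beta=\infty]=1$, the analytic implicit function theorem, and the covariance-type formula $\mathcal{H}_a(\theta)=\tilde E_\theta[(X_{\tau_1}-\xi(\theta)\tau_1)(X_{\tau_1}-\xi(\theta)\tau_1)^T]/\tilde E_\theta[\tau_1]$. So your reconstruction is essentially the intended argument, and the identification of $\Lambda_a$ through the generating-function/regeneration decomposition, the computation of $\nabla\Lambda_a(0)=\xi_o$, and the Hessian formula are all correct.

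Two points in your final step need repair, though neither is fatal. First, the assertion that the Radon--Nikodym derivative $\exp\{\langle\theta,X_{\tau_1}\rangle-\Lambda_a(\theta)\tau_1\}$ is ``bounded above and below by constants depending only on $c$'' is false as stated: since $|X_{\tau_1}|\leq\tau_1$ and $|\Lambda_a(\theta)|\leq|\theta|$, the density is only controlled by $\mathrm{e}^{\pm c\tau_1}$ with $\tau_1$ unbounded. The correct statement is that the density is bounded below by a constant of the form $\mathrm{e}^{-Cc}$ on your bounded-time witness events (which is all you need for the numerator), while the denominator is handled by $\tilde E_\theta[\tau_1]\leq E_o[\tau_1\mathrm{e}^{c\tau_1}\,|\,\beta=\infty]<\infty$ uniformly over $\theta\in\mathcal{C}(c)$, via (\ref{yoruk}). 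Second, with this route the resulting constant depends not only on $c$ and $\kappa$ but also on the regeneration-tail quantities $c_2$ and $H(c)$ from (\ref{serdar})--(\ref{yoruk}) (the witness events carry a factor $c_2$, the denominator a factor $H$-type bound), so you obtain the lemma's qualitative conclusion but not, as written, the precise dependence ``only on $c$ and $\kappa$'' claimed in the statement; that sharper dependence is what the paper attributes to equation (2.10) of \cite{YilmazAveraged}. A last cosmetic slip: your analyticity domain $\{|\theta|-\lambda<c_3/2\}$ is too small to contain $(\theta,\Lambda_a(\theta))$ for all $\theta\in\mathcal{C}(c_3)$; since $-|\theta|\leq\langle\theta,\xi_o\rangle\leq\Lambda_a(\theta)\leq|\theta|$ one has $|\theta|-\Lambda_a(\theta)\leq 2|\theta|<c_3$, so you should work on $\{|\theta|-\lambda<c_3\}$ (where (\ref{yoruk}) still gives finiteness and local analyticity), and likewise the phrase ``decreases from $+\infty$ to $0$'' should be replaced by the observation that $\Phi(\theta,\lambda)>1$ for $\lambda$ slightly below $-|\theta|$ and $\Phi(\theta,|\theta|)\leq1$, which suffices for existence and uniqueness of the root.
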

\begin{proof}
See the proofs of Lemmas 6 and 12 of \cite{YilmazAveraged}. In particular, the desired lower bound for the smallest eigenvalue of $\mathcal{H}_a$ is evident from equation (2.10) of that paper.
\end{proof}

Given any $N\geq1$, $\theta\in\mathcal{C}(c_3)$ and $\omega\in\Omega$, define
\begin{align*}
\hat{W}_N(\theta,\omega)&:= E_o^\omega[\exp\{\langle\theta,X_{\tau_N}\rangle-\Lambda_a(\theta)\tau_N\}]\qquad\mbox{and}\\
W_N(\theta,\omega)&:= E_o^\omega[\left.\exp\{\langle\theta,X_{\tau_N}\rangle-\Lambda_a(\theta)\tau_N\}\right|\beta=\infty].
\end{align*}

\begin{lemma}\label{cevat}
For every $\theta\in\mathcal{C}(c_3)$, if 
$$\limsup_{N\to\infty}\frac{1}{N}\log\hat{W}_N(\theta,\cdot)<0$$ holds $\mathbb{P}$-a.s., then $\Lambda_q(\theta)<\Lambda_a(\theta)$.
\end{lemma}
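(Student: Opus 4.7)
The plan is to show that $\mathbb{P}$-a.s.\
\[
\lim_{N\to\infty}\frac{1}{N}\log\hat W_N(\theta,\omega) = \bar\tau\,(\Lambda_q(\theta)-\Lambda_a(\theta)),\qquad \bar\tau := E_o[\tau_1\mid \beta=\infty],
\]
so that the hypothesis $\limsup \frac1N\log\hat W_N<0$ forces $\Lambda_q(\theta)<\Lambda_a(\theta)$ (since $\bar\tau>0$). Everything reduces to rewriting the quenched logarithmic moment generating function in terms of regeneration times and then replacing $\tau_k$ inside $\hat W_k$ by its mean $k\bar\tau$.

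First I would pass from deterministic times to regeneration times. Put $R_N:=\max\{k:\tau_k\le N\}$; because each step has unit $\ell^\infty$ norm, $|X_N-X_{\tau_{R_N}}|\le \tau_{R_N+1}-\tau_{R_N}$, which has a uniformly (in $\omega$) bounded exponential moment by (\ref{serdar}) and (\ref{yoruk}). Hölder's inequality therefore yields
\[
\bigl|\log E_o^\omega[\mathrm{e}^{\langle\theta,X_N\rangle}] - \log E_o^\omega[\mathrm{e}^{\langle\theta,X_{\tau_{R_N}}\rangle}]\bigr| = o(N)\qquad\mathbb{P}\text{-a.s.},
\]
so the existence of $\Lambda_q(\theta)$ together with $R_N/N\to 1/\bar\tau$ (by LLN applied to the i.i.d.\ pieces under $P_o(\cdot\mid \beta=\infty)$, transferred to the quenched law via the regeneration structure) gives
\[
\lim_{k\to\infty}\frac{1}{k}\log E_o^\omega[\mathrm{e}^{\langle\theta,X_{\tau_k}\rangle}] = \bar\tau\,\Lambda_q(\theta)\qquad\mathbb{P}\text{-a.s.}
\]

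Next I would use that for $\mathbb{P}$-a.e.\ $\omega$, $\tau_k/k\to\bar\tau$ with exponential tails for the deviations (inherited from (\ref{yoruk}) applied to the i.i.d.\ gaps $\tau_{j+1}-\tau_j$). For any $\epsilon>0$, splitting the expectation defining $\hat W_k$ on $A_\epsilon:=\{|\tau_k-k\bar\tau|\le \epsilon k\}$ allows one to bracket the random factor $\exp(-\Lambda_a(\theta)\tau_k)$ by $\exp(-\Lambda_a(\theta)k\bar\tau(1\pm\epsilon))$. The complement $A_\epsilon^c$ contributes negligibly: on it one upper-bounds $\exp\langle\theta,X_{\tau_k}\rangle\le \exp(|\theta|\tau_k)$ and uses $2|\theta|<c_3$ together with (\ref{yoruk}) to absorb the deterministic rate into a quenched Chernoff estimate on $A_\epsilon^c$. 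Combining with the first step,
\[
\frac{1}{k}\log\hat W_k(\theta,\omega) = \bar\tau\,\Lambda_q(\theta)-\Lambda_a(\theta)\bar\tau + O(\epsilon)\qquad\mathbb{P}\text{-a.s.},
\]
and letting $\epsilon\downarrow 0$ identifies the limit. Invoking the hypothesis finishes the proof.

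The main obstacle is the transfer in the first step: one must verify that $R_N/N\to 1/\bar\tau$ and the Hölder comparison are simultaneously valid $\mathbb{P}$-a.s., which requires a careful use of the fact that the post-regeneration pieces form an i.i.d.\ sequence under the averaged measure and that the quenched tails of $\tau_{j+1}-\tau_j$ are uniform in $\omega$ by (\ref{yoruk}). A secondary nuisance, resolved in the second step, is that the weight $\exp(-\Lambda_a(\theta)\tau_k)$ can have either sign in its exponent depending on $\theta$, so one has to treat the events $\{\tau_k>k\bar\tau(1+\epsilon)\}$ and $\{\tau_k<k\bar\tau(1-\epsilon)\}$ symmetrically, using quenched large deviation control for $\tau_k/k$ that is strong enough to dominate the factor $\exp(\Lambda_a(\theta)\tau_k)$ at the exponential scale.
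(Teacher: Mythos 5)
Your proposal hinges on the claimed identity $\lim_{N\to\infty}\frac1N\log\hat W_N(\theta,\omega)=\bar\tau\,(\Lambda_q(\theta)-\Lambda_a(\theta))$ with $\bar\tau=E_o[\tau_1\mid\beta=\infty]$, and this is where it breaks down. Both of your key steps treat the regeneration statistics as if their law-of-large-numbers behaviour under the \emph{untilted} walk persisted inside an exponentially weighted expectation. In step 1, $R_N/N\to1/\bar\tau$ holds $P_o^\omega$-a.s.\ (or under $P_o$), but the quantity you are computing is $E_o^\omega[\mathrm{e}^{\langle\theta,X_N\rangle}]$, and the paths that dominate this expectation are precisely atypical ones: under the weight $\mathrm{e}^{\langle\theta,X_N\rangle}$ the density of regenerations per unit time is in general different from $1/\bar\tau$, so you cannot pass from $E_o^\omega[\mathrm{e}^{\langle\theta,X_{\tau_{R_N}}\rangle}]$ to $E_o^\omega[\mathrm{e}^{\langle\theta,X_{\tau_k}\rangle}]$ with deterministic $k\approx N/\bar\tau$ without controlling the \emph{weighted} law of $R_N$. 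In step 2, the same problem recurs: restricting to $A_\epsilon=\{|\tau_k-k\bar\tau|\le\epsilon k\}$ inside $E_o^\omega[\mathrm{e}^{\langle\theta,X_{\tau_k}\rangle-\Lambda_a(\theta)\tau_k}]$ can change the exponential order, because $A_\epsilon$ may be exponentially unlikely under the tilted path measure; for the direction you actually need (a \emph{lower} bound $\hat W_k\ge \mathrm{e}^{k[\bar\tau(\Lambda_q-\Lambda_a)-o(1)]}$, so that decay of $\hat W$ forces $\Lambda_q<\Lambda_a$), your Chernoff control of $A_\epsilon^c$ is irrelevant and the needed lower bound $E_o^\omega[\mathrm{e}^{\langle\theta,X_{\tau_k}\rangle};A_\epsilon]\ge \mathrm{e}^{k\bar\tau\Lambda_q-o(k)}$ is unjustified and false in general. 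The correct relation between the time-indexed free energy $\Lambda_q$ and the regeneration-indexed one is a Legendre/renewal duality (the value of $\lambda$ at which $\lim_k\frac1k\log E_o^\omega[\exp\{\langle\theta,X_{\tau_k}\rangle-\lambda\tau_k\}]$ vanishes), not multiplication by $\bar\tau$; note that even in the averaged case the normalization (\ref{cevikuff}) is of this renewal type.

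The lemma does not require identifying the limit at all, and the paper's proof is a one-sided estimate that sidesteps these issues: fix $2|\theta|<c<c_3$, decompose $E_o^\omega[\exp\{\langle\theta,X_n\rangle-\Lambda_a(\theta)n\}]$ at the deterministic time $n$ according to whether $n<\tau_{\lfloor n/K\rfloor}$ (an atypically small number of regenerations, killed by the uniform quenched bounds (\ref{serdar})--(\ref{yoruk}) via $\mathrm{e}^{(2|\theta|-c)n}H(c)^{\lfloor n/K\rfloor}$ once $K$ is large) or $\tau_j\le n<\tau_{j+1}$ with $j\ge n/K$, in which case the contribution is bounded by $\hat W_j(\theta,\omega)$ times a uniformly bounded overshoot factor $\mathrm{ess\,sup}_{\mathbb{P}}E_o^{\omega'}[\exp\{c\tau_1\}\mid\beta=\infty]$; the hypothesis $\hat W_j\le C_3\mathrm{e}^{-c_4 j}$ then gives exponential decay of $E_o^\omega[\exp\{\langle\theta,X_n\rangle-\Lambda_a(\theta)n\}]$, i.e.\ $\Lambda_q(\theta)<\Lambda_a(\theta)$. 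If you want to salvage your approach, you would have to replace the LLN-based bracketing by this kind of decomposition over the last regeneration index before time $n$, using the uniform exponential moments to absorb the overshoot — at which point you have reproduced the paper's argument.
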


\begin{proof}
Let $\theta\in\mathcal{C}(c_3)$. Then, $\theta\in\mathcal{C}(c)$ for some $c<c_3$. By hypothesis, for $\mathbb{P}$-a.e.\ $\omega$, there exist $C_3\geq1$ and $c_4>0$ (both depending on $\omega$) such that $\hat{W}_N(\theta,\omega)\leq C_3\mathrm{e}^{-c_4N}$ for every $N\geq1$.

Given any $n\geq1$ and $K\geq1$, it follows from Chebyshev's inequality and (\ref{yoruk}) that
\begin{align*}
&E_o^\omega[\exp\{\langle\theta,X_n\rangle-\Lambda_a(\theta)n\}]\\
&\qquad=E_o^\omega[\exp\{\langle\theta,X_n\rangle-\Lambda_a(\theta)n\},n<\tau_{\lfloor\frac{n}{K}\rfloor}]+\sum_{j=\lfloor\frac{n}{K}\rfloor}^nE_o^\omega[\exp\{\langle\theta,X_n\rangle-\Lambda_a(\theta)n\},\tau_j\leq n<\tau_{j+1}]\\
&\qquad\leq\mathrm{e}^{2|\theta|n}P_o^\omega(n<\tau_{\lfloor\frac{n}{K}\rfloor})+\sum_{j=\lfloor\frac{n}{K}\rfloor}^nE_o^\omega[\exp\{\langle\theta,X_{\tau_j}\rangle-\Lambda_a(\theta)\tau_j\}]\mathrm{ess}\sup_{\mathbb{P}}E_o^{\omega'}[\left.\exp\{2|\theta|\tau_1\}\right|\beta=\infty]\\
&\qquad\leq\mathrm{e}^{(2|\theta|-c)n}E_o^{\omega}[\exp\{c\tau_{\lfloor\frac{n}{K}\rfloor}\}]+\sum_{j=\lfloor\frac{n}{K}\rfloor}^n\hat{W}_j(\theta,\omega)\mathrm{ess}\sup_{\mathbb{P}}E_o^{\omega'}[\left.\exp\{c\tau_1\}\right|\beta=\infty]\\
&\qquad\leq\mathrm{e}^{(2|\theta|-c)n}E_o^{\omega}[\exp\{c\tau_1\}]\left(\mathrm{ess}\sup_{\mathbb{P}}E_o^{\omega'}[\left.\exp\{c\tau_1\}\right|\beta=\infty]\right)^{\lfloor\frac{n}{K}\rfloor-1}\\
&\qquad\quad+\sum_{j=\lfloor\frac{n}{K}\rfloor}^n\hat{W}_j(\theta,\omega)\mathrm{ess}\sup_{\mathbb{P}}E_o^{\omega'}[\left.\exp\{c\tau_1\}\right|\beta=\infty]\\
&\qquad\leq\mathrm{e}^{(2|\theta|-c)n}H(c)^{\lfloor\frac{n}{K}\rfloor}+H(c)\sum_{j=\lfloor\frac{n}{K}\rfloor}^nC_3\mathrm{e}^{-c_4j}.
\end{align*}
Take $K$ sufficiently large, and conclude that $$\Lambda_q(\theta)-\Lambda_a(\theta)=\lim_{n\to\infty}\frac{1}{n}\log E_o^\omega[\exp\{\langle\theta,X_n\rangle-\Lambda_a(\theta)n\}]<0.\qedhere$$
\end{proof}

\begin{lemma}\label{bomontiymis}
For every $\theta\in\mathcal{C}(c_3)$, if 
\begin{equation}\label{serce}
\limsup_{N\to\infty}
\frac{1}{N}\log\mathbb{E}\left[ W_N(\theta,\cdot)^\alpha\right]<0
\end{equation} for some $\alpha\in(0,1)$, then $\Lambda_q(\theta)<\Lambda_a(\theta)$. Hence, by convex duality, $I_a<I_q$ at $\xi=\nabla\Lambda_a(\theta)$.
\end{lemma}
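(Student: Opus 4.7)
By Lemma \ref{cevat}, it suffices to show that $\limsup_N N^{-1}\log\hat W_N(\theta,\omega)<0$ for $\mathbb{P}$-a.e.\ $\omega$. The hypothesis and Chebyshev's inequality first give $\mathbb{P}\bigl(W_N(\theta,\cdot)>e^{-c_4 N/\alpha}\bigr)\leq e^{c_4 N}\mathbb{E}[W_N(\theta,\cdot)^\alpha]$, which is summable in $N$ for some $c_4>0$, so Borel--Cantelli yields the almost-sure exponential decay of $W_N(\theta,\omega)$. By stationarity of $\mathbb{P}$, the same bound holds for $W_N(\theta,T_y\omega)$ at every fixed $y\in\mathbb{Z}^d$.

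The next step exploits the strong Markov property of $P_o^\omega$ at $\tau_1$. Since the post-$\tau_1$ walk, viewed from $X_{\tau_1}$ in the environment $T_{X_{\tau_1}}\omega$, is by construction conditioned on $\{\beta=\infty\}$, one obtains
\begin{equation*}
\hat W_N(\theta,\omega)=E_o^\omega\!\left[\exp\{\langle\theta,X_{\tau_1}\rangle-\Lambda_a(\theta)\tau_1\}\,W_{N-1}(\theta,T_{X_{\tau_1}}\omega)\right].
\end{equation*}
Since $|\langle\theta,X_{\tau_1}\rangle|\leq|\theta|\tau_1$ and $|\Lambda_a(\theta)|\leq|\theta|$, the exponential factor is at most $e^{2|\theta|\tau_1}$. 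The difficulty is that $W_{N-1}$ is evaluated at the random environment $T_{X_{\tau_1}}\omega$, which couples $\omega$ to the walk.

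I would then fix $D$ large (to be chosen) and split according to $\{\tau_1\leq DN\}$ versus $\{\tau_1>DN\}$. Iterating (\ref{yoruk}) at successive regeneration times gives the crude pointwise bound $W_{N-1}(\theta,\cdot)\leq H(c)^{N-1}=:e^{C_1 N}$ for any $c\in[2|\theta|,c_3)$, valid $\mathbb{P}$-a.s.\ uniformly in the environment. On $\{\tau_1>DN\}$, Chebyshev yields $E_o^\omega[e^{2|\theta|\tau_1}\one_{\tau_1>DN}]\leq e^{-c^\ast DN}\cdot c_2 H(2|\theta|+c^\ast)$ for small $c^\ast>0$, so the corresponding contribution to $\hat W_N$ is $O(e^{(C_1-c^\ast D)N})$, exponentially small once $D$ is chosen large enough. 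On $\{\tau_1\leq DN\}$ we have $|X_{\tau_1}|\leq DN$, whence
\begin{equation*}
\hat W_N(\theta,\omega)\,\one_{\tau_1\leq DN}\leq H(2|\theta|)\max_{|y|\leq DN}W_{N-1}(\theta,T_y\omega),
\end{equation*}
and a union bound over the $O(N^d)$ candidate $y$'s, combined with the Chebyshev estimate from the first paragraph, shows that this maximum also decays exponentially $\mathbb{P}$-a.s.\ (the polynomial factor being absorbed by the exponential decay of $\mathbb{E}[W_N^\alpha]$, at the cost of a slight loss in rate).

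Summing the two bounds gives the required almost-sure exponential decay of $\hat W_N(\theta,\omega)$, so Lemma \ref{cevat} produces $\Lambda_q(\theta)<\Lambda_a(\theta)$; convex duality at $\xi:=\nabla\Lambda_a(\theta)$ then gives $I_a(\xi)=\langle\theta,\xi\rangle-\Lambda_a(\theta)<\langle\theta,\xi\rangle-\Lambda_q(\theta)\leq I_q(\xi)$. I expect the union-bound step to be the main obstacle: it uses the non-nestling assumption twice -- via the exponential tail of $\tau_1$ to truncate at $DN$, and via the uniform exponential moments of the regeneration increments to keep the crude bound on $W_{N-1}$ manageable.
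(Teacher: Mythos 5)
Your argument is correct, and it reaches the same endpoint as the paper (almost-sure exponential decay of $\hat W_N$, then Lemma \ref{cevat} and convex duality) using the same essential ingredients: the quenched renewal decomposition of $\hat W_N$ at $\tau_1$, the uniform exponential moment bound \eqref{yoruk}, and the restriction $2|\theta|<c_3$. The mechanics of the transfer from the hypothesis \eqref{serce} to decay of $\hat W_N$ differ, however. The paper stays at the level of annealed fractional moments: starting from the same identity $\hat W_{N+1}(\theta,\omega)=\sum_x E_o^\omega[\exp\{\langle\theta,X_{\tau_1}\rangle-\Lambda_a(\theta)\tau_1\},X_{\tau_1}=x]\,W_N(\theta,T_x\omega)$, it uses subadditivity of $u\mapsto u^\alpha$, bounds the first-block factor by $\mathrm{e}^{(2|\theta|-c)|x|_1}E_o^\omega[\exp\{c\tau_1\}]$ and its essential supremum (this is how the dependence between the first block and $W_N(\theta,T_x\cdot)$ is neutralized), and invokes stationarity to get $\mathbb{E}[\hat W_{N+1}(\theta,\cdot)^\alpha]\leq H(c)^\alpha\,\mathbb{E}[W_N(\theta,\cdot)^\alpha]\sum_x\mathrm{e}^{(2|\theta|-c)\alpha|x|_1}$, c.f.\ \eqref{dogar}, and then passes to an almost-sure statement via \eqref{hakki}. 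You instead go almost-sure first: Markov plus Borel--Cantelli (with a union bound over the $O(N^d)$ shifts $|y|\leq DN$) gives uniform exponential decay of $W_{N-1}(\theta,T_y\omega)$, and the truncation on $\{\tau_1\leq DN\}$ versus $\{\tau_1>DN\}$, together with the crude bound $W_{N-1}\leq H(c)^{N-1}$, transfers this pathwise to $\hat W_N$; since the bound on $W_{N-1}(\theta,T_{X_{\tau_1}}\omega)$ is pointwise in $\omega$, you never need to decouple the first block from the rest. Your route is slightly longer but avoids having to justify \eqref{hakki} (which, as stated, leans on the reasoning of \eqref{fracmom} and hence on determinism of the limiting quantity), while the paper's route avoids the truncation and union bound. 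Two small points to tighten: the crude bound $W_{N-1}\leq H(c)^{N-1}$ is not literally an iteration of \eqref{yoruk} (the regeneration blocks are not independent under $P_o^\omega$), but it follows from the same quenched ess-sup iteration used in the proof of Lemma \ref{cevat}, possibly with an extra harmless factor $c_2^{-1}$; and the essential-supremum bounds should be arranged to hold simultaneously for all shifts $T_y\omega$, which is immediate since $\mathbb{Z}^d$ is countable.
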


\begin{proof}
For any $N\geq1$ and $\theta\in\mathcal{C}(c_3)$, it follows from the renewal structure and (\ref{cevikuff}) that
\begin{align*}
\mathbb{E}[P_o^\omega(\beta=\infty)W_N(\theta,\cdot)]&=P_o(\beta=\infty)E_o[\left.\exp\{\langle\theta,X_{\tau_N}\rangle-\Lambda_a(\theta)\tau_N\}\right|\beta=\infty]\\
&=P_o(\beta=\infty)\left(E_o[\left.\exp\{\langle\theta,X_{\tau_1}\rangle-\Lambda_a(\theta)\tau_1\}\right|\beta=\infty]\right)^N\\
&=P_o(\beta=\infty).
\end{align*}
Given any $\alpha\in(0,1)$, by the same reasoning as in (\ref{fracmom}),
\begin{equation}\label{hakki}
\limsup_{N\to\infty}\frac{1}{N}\log\hat{W}_N(\theta,\cdot)\leq
\limsup_{N\to\infty}\frac{1}{N\alpha}
\log\mathbb{E}\left[\hat{W}_N(\theta,\cdot)^\alpha\right],\quad
\mathbb{P}\mbox{-a.s.}
\end{equation}
On the other hand, if $2|\theta|<c<c_3$, then we see by subadditivity, Chebyshev's inequality, and (\ref{yoruk}) that
\begin{align}
\mathbb{E}\left[\hat{W}_{N+1}(\theta,\cdot)^\alpha\right]&=\mathbb{E}\left[\left(\sum_{x\in\mathbb{Z}^d}E_o^\omega[\exp\{\langle\theta,X_{\tau_1}\rangle-\Lambda_a(\theta)\tau_1\},X_{\tau_1}=x]W_N(\theta,T_x\cdot)\right)^\alpha\right]\nonumber\\
&\leq\mathbb{E}\left[\sum_{x\in\mathbb{Z}^d}\left(E_o^\omega[\exp\{\langle\theta,X_{\tau_1}\rangle-\Lambda_a(\theta)\tau_1\},X_{\tau_1}=x]\right)^\alpha W_N(\theta,T_x\cdot)^\alpha\right]\nonumber\\
&\leq\mathbb{E}\left[\sum_{x\in\mathbb{Z}^d}\left(E_o^\omega[\exp\{2|\theta|\tau_1\},\tau_1\geq|x|_1]\right)^\alpha W_N(\theta,T_x\cdot)^\alpha\right]\nonumber\\
&\leq\mathbb{E}\left[\sum_{x\in\mathbb{Z}^d}\left(\mathrm{e}^{(2|\theta|-c)|x|_1}E_o^\omega[\exp\{c\tau_1\}]\right)^\alpha W_N(\theta,T_x\cdot)^\alpha\right]\nonumber\\
&\leq H(c)^\alpha\mathbb{E}\left[ W_N(\theta,\cdot)^\alpha\right]\sum_{x\in\mathbb{Z}^d}\mathrm{e}^{(2|\theta|-c)\alpha|x|_1}.\label{dogar}
\end{align}
The desired result follows immediately from (\ref{hakki}), (\ref{dogar}) and Lemma \ref{cevat}.
\end{proof}

\subsection{The correlation condition}

In this subsection, we will consider space-only RWRE on $\mathbb{Z}^d$ with $d=2,3$, assume that the walk is non-nestling relative to $e_d$, and outline how one can modify the arguments given in Section \ref{SpaceTimeSection} in order to reduce (\ref{serce}) to a simpler inequality.

We start with $d=2$. For every $n\geq1$ of the form $k^2$, and for every $y=(y',y'')\in\mathbb{Z}^2$, let $$J_y:=[(y'-\frac{1}{2})\sqrt{n},(y'+\frac{1}{2})\sqrt{n})\times[(y''-\frac{1}{2})\sqrt{n},(y''+\frac{1}{2})\sqrt{n})\subset\mathbb{R}^2,$$c.f.\ (\ref{azkalbir}). Take $N=nm$ for some $m\geq1$. For every $\theta\in\mathcal{C}(c_3)$, $\omega\in\Omega$ and $Y=(y_1,\ldots,y_m)\in(\mathbb{Z}^2)^m$, define $$\bar{W}_N(\theta,\omega,Y):=E_o^\omega[\left.\exp\{\langle\theta,X_{\tau_N}\rangle-\Lambda_a(\theta)\tau_N\}, X_{\tau_{jn}}-\lfloor jn\zeta(\theta)\rfloor\in J_{y_j}\mbox{ for every }j\leq m\right|\beta=\infty],$$c.f.\ (\ref{azkaliki}), 
where 
\begin{equation}
\label{eq-150310}
\zeta(\theta):=E_o[\left.X_{\tau_1}\exp\{\langle\theta,X_{\tau_1}\rangle-
\Lambda_a(\theta)\tau_1\}\right|\beta=\infty].
\end{equation}
By subadditivity,$$\mathbb{E}[W_N(\theta,\cdot)^\alpha]\leq\sum_{Y}\mathbb{E}\left[\bar{W}_N(\theta,\cdot,Y)^\alpha\right],$$ c.f.\ (\ref{asama}). Given any $C_1\geq1$, $Y=(y_1,\ldots,y_m)\in(\mathbb{Z}^2)^m$ and $j\in\{1,\ldots,m\}$, let
\begin{align*}
B_j=B_j(y_{j-1},y_j)&:=\{(s,i)\in\mathbb{Z}^2: (j-1)n\langle\zeta(\theta),e_2\rangle+\sqrt{n}(y_{j-1}''+1/2)\leq i<jn\langle\zeta(\theta),e_2\rangle+\sqrt{n}(y_j''-1/2),\\
&\hspace{5.8cm}|(s-\sqrt{n}y_{j-1}')-\frac{\langle\zeta(\theta),e_1\rangle}{\langle\zeta(\theta),e_2\rangle}(i-\sqrt{n}y_{j-1}'')|\leq C_1\sqrt{n}\},
\end{align*}
c.f.\ (\ref{bicey}). Also, redefine $a(\theta,\cdot)$ by setting $$a(\theta,x):=\langle\theta,v(T_x\omega)\rangle - \mathbb{E}[\langle\theta,v(\cdot)\rangle]$$
for every $x\in\mathbb{Z}^2$, where $v(\omega)=\sum_{z\in\mathcal{R}}\pi(0,z)z$ as before. Note that, under the assumptions stated in Definition \ref{gaffur}, we have $\mathbb{E}[\langle\theta,v(\cdot)\rangle]=\langle\theta,\xi_o\rangle$. However, this equality does not necessarily hold in general.

With these modified definitions, the arguments in Subsections 
\ref{tiltsubsec} and \ref{estsubsec} easily carry over, once one replaces the i.i.d.\ random variables $$E_{o}^{T_{X_i}\omega}[\exp\{\langle \theta,X_1\rangle-\log \phi(\theta)\}]$$
by the variables
$$E_o^{T_{X_{\tau_i}}\omega}[\exp\{\langle\theta,X_{\tau_1}\rangle-
\Lambda_a(\theta)\tau_1\}|\beta=\infty]\,.$$
Therefore, in order to prove (\ref{serce}), it suffices to show that
\begin{equation}\label{lemansamndd}
\sum_{y\in\mathbb{Z}^2}\max_{x\in J_o}E_x\left[\exp\{\langle\theta,X_{\tau_n}-x\rangle-\Lambda_a(\theta)\tau_n+f_K(\delta_nD(B_1))\},X_{\tau_n}-\lfloor n\zeta(\theta)\rfloor\in J_y\,|\,\beta=\infty\right]^\alpha<1/2
\end{equation}
when 
$\zeta(\theta)$ is as in (\ref{eq-150310}) and 
$n$, $K$, $C_1$ are sufficiently large, c.f.\ Lemma \ref{lessthanhalf}. Here, $\alpha\in(0,1)$ is fixed, $f_K(u):=-K\one_{u\geq\mathrm{e}^{K^2}}$ and $\delta_n=C_1^{-1/2}n^{-3/4}$, as before.

We imitate (\ref{twosums}), and write the sum in (\ref{lemansamndd}) as
\begin{equation}\label{twosumsndd}
\sum_{y\in\mathbb{Z}^2}\max_{x\in J_o}E_x\left[\cdots\right]^\alpha=\sum_{{y\in\mathbb{Z}^2:}\atop{|y|>R}}\max_{x\in J_o}E_x\left[\cdots\right]^\alpha+\sum_{{y\in\mathbb{Z}^2:}\atop{|y|\leq R}}\max_{x\in J_o}E_x\left[\cdots\right]^\alpha
\end{equation} with some large constant $R$,
to be determined. Just like in the space-time case, the first sum on the RHS of (\ref{twosumsndd}) is bounded from above by
\begin{equation}\label{okcc}
\sum_{{y\in\mathbb{Z}^2:}\atop{|y|>R}}\hat{P}_o^\theta\left(\left|\frac{X_{n}-\lfloor n\zeta(\theta)\rfloor}{\sqrt{n}}\right|\geq |y|-1\right)^\alpha.
\end{equation}
Here, $\hat P_o^\theta$ is redefined to be the probability measure on paths induced by the random walk (in a deterministic environment) whose transition probabilities are given by
$$q^\theta(x):=E_o[\left.\exp\{\langle\theta,X_{\tau_1}\rangle-\Lambda_a(\theta)\tau_1\}, X_{\tau_1}=x\right|\beta=\infty],\quad x\in\mathbb{Z}^2.$$
(Note that $\sum_{x\in\mathbb{Z}^2}q^\theta(x)=1$ by (\ref{cevikuff}).) If $\hat E_o^\theta$ denotes the corresponding expectation, it is clear that \begin{equation}\label{ozggurk}
\hat E_o^\theta[\exp\{c|X_1|\}]<\infty\quad\mbox{ for every }c\in(0,c_3-2|\theta|).
\end{equation}
Therefore, by Chebyshev's inequality, (\ref{okcc}) can be made arbitrarily small (uniformly in large $n$) by choosing $R$ sufficiently large.

The second sum on the RHS of (\ref{twosumsndd}) can be controlled by showing that 
\begin{equation}\label{oreoye}
\max_{{y\in\mathbb{Z}^2:}\atop{|y|\leq R}}\max_{x\in J_o}E_x\left[\exp\{\langle\theta,X_{\tau_n}-x\rangle-\Lambda_a(\theta)\tau_n+f_K(\delta_nD(B_1))\},X_{\tau_n}-\lfloor n\zeta(\theta)\rfloor\in J_y\,|\,\beta=\infty\right]
\end{equation} is small when $n$, $K$ and $C_1$ are sufficiently large. In the space-time case, the verification of the analogous statement, i.e., (\ref{lastsuff}), relied on the fact that \begin{equation}\label{krusil}
E_o[\exp\{\langle\theta,X_{n}\rangle-
n\log\phi(\theta)\}\sum_{i=0}^{n-1}a(\theta,X_i)]
\end{equation}
grows linearly in $n$, c.f.\ (\ref{chebyineq}) and 
\eqref{crossterms}. 
In the space-only case, the drift vectors at the points off the path do not contribute to the mean of $D(B_1)$ under the tilted measure, and the drift vector at any point on the path contributes only once even if it is visited multiple times. Therefore, 
the statement concerning
(\ref{krusil}) needs to be replaced by the statement that
\begin{equation}\label{krusilyeni}
E_o[\exp\{\langle\theta,X_{\tau_n}\rangle-\Lambda_a(\theta)\tau_n\}\!\!\!\!\!\sum_{x\in S(X,\tau_n)}\!\!\!\!\!a(\theta,x)|\beta=\infty]
\end{equation} grows linearly in $n$.
Here, for any $j\geq1$,
\begin{equation}\label{isimmm}
S(X,j):=\{X_i:0\leq i<j\}.
\end{equation}

In the space-time case, the variance of $D(B_1)$ under the tilted measure was 
shown to be $O(n^{3/2})$ since the only non-vanishing terms were those corresponding to points $x,y\in\mathbb{Z}^2$ such that $x=y$. In the space-only case, steps of the walk between consecutive regeneration times are not independent, and we therefore need to also consider terms corresponding to $x$ and $y$ that are both on the path in the same regeneration block.  However, since regeneration times have exponentially decaying tails, the total contribution of such terms is $O(n)$, and the variance of $D(B_1)$ under the tilted measure is still $O(n^{3/2})$.

With these modifications, the argument in Subsection \ref{finsubsec} enables us to deduce (\ref{serce}) provided that (\ref{krusilyeni}) grows linearly in $n$. By the renewal structure, the latter is equivalent to the following \textit{correlation condition}:
\begin{equation}\label{theassumption}
\mu:=E_o[\exp\{\langle\theta,X_{\tau_1}\rangle-\Lambda_a(\theta)\tau_1\}\!\!\!\!\!\sum_{x\in S(X,\tau_1)}\!\!\!\!\!a(\theta,x)|\beta=\infty]>0.
\end{equation} 
(This replaces the choice of $\mu$ for the space-time case, see 
(\ref{eq-150310a}).)

For $d=3$, after modifying (\ref{oreoye}) by (i) taking the first maximum over $\{y\in\mathbb{Z}^3: |y|\leq R\}$, (ii) replacing the sets $J_y$ and $B_1$ by their three dimensional analogs, and (iii) redefining $D(B_1)$ as in (\ref{yeniD}), one can employ the reasoning above in order to reduce (\ref{serce}) to showing that (\ref{oreoye}) is small when $n$, $K$ and $C_1$ are sufficiently large. After that, one can set $\delta_n:=n^{-1}(\log n)^{-1/2}$, apply the same kind of modifications to the argument given in Subsection \ref{ayyy}, and further reduce (\ref{serce}) to (\ref{theassumption}). In particular, note that Lemma \ref{localCLTrefined} continues to hold under the new definition of $\hat P_o^\theta$, thanks to (\ref{ozggurk}). We omit the (routine) details.

We have arrived at the following theorem.
\begin{theorem}\label{nesin}
Consider space-only
RWRE on $\mathbb{Z}^d$ with $d=2,3$. Assume (\ref{cakmaeminabi}), (\ref{space-only}) and that the walk is non-nestling relative to $e_d$. 
Then, there exists an open set $\mathcal{A}_{so}\subset\mathbb{R}^d$ with the following properties:
\begin{itemize}
\item[(i)] $I_a$ is strictly convex and analytic on $\mathcal{A}_{so}$,
\item[(ii)] $\xi_o\in\mathcal{A}_{so}$, and 
\item[(iii)] for every $\xi\in\mathcal{A}_{so}$, the strict inequality $I_a(\xi)<I_q(\xi)$ holds if (\ref{theassumption}) is satisfied at $\theta:=\nabla I_a(\xi)$.
\end{itemize}
\end{theorem}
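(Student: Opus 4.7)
The plan is to assemble the theorem from Lemma \ref{zikkim}, Lemma \ref{bomontiymis}, and the modifications of the space-time fractional moment argument that have just been sketched.

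For (i) and (ii), Lemma \ref{zikkim} tells us that $\Lambda_a$ is analytic on $\mathcal{C}(c_3)$ with positive definite Hessian $\mathcal{H}_a$, so $\nabla\Lambda_a$ is an analytic diffeomorphism of $\mathcal{C}(c_3)$ onto its image. I would set $\mathcal{A}_{so}:=\nabla\Lambda_a(\mathcal{C}(c_3))$, which is open and, since $\nabla\Lambda_a(0)=\xi_o$, contains $\xi_o$. By Legendre duality, $\nabla I_a$ equals $(\nabla\Lambda_a)^{-1}$ on $\mathcal{A}_{so}$, so $I_a$ is analytic on $\mathcal{A}_{so}$ and its Hessian is the inverse of $\mathcal{H}_a\circ\nabla I_a$, which is again positive definite; hence $I_a$ is strictly convex on $\mathcal{A}_{so}$.

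For (iii), fix $\xi\in\mathcal{A}_{so}$, let $\theta:=\nabla I_a(\xi)\in\mathcal{C}(c_3)$, and assume (\ref{theassumption}). By Lemma \ref{bomontiymis} it suffices to exhibit an $\alpha\in(0,1)$ with $\limsup_N\frac1N\log\mathbb{E}[W_N(\theta,\cdot)^\alpha]<0$. The plan is to run the program described in this subsection: decompose $\mathbb{E}[W_N(\theta,\cdot)^\alpha]\leq\sum_Y\mathbb{E}[\bar W_N(\theta,\cdot,Y)^\alpha]$ along the blocks $J_y$, tilt by $g(\theta,\omega,Y)=\exp\sum_jf_K(\delta_nD(B_j))$, and apply H\"older as in (\ref{firstsecondterm}). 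The second H\"older factor is controlled as in (\ref{bateman}) by Chebyshev's inequality and the i.i.d.\ environment on the pairwise disjoint tubes $B_j$. The Markov-property expansion of $\bar W_N$ from Subsection \ref{estsubsec} carries over once the i.i.d.\ random variables $E_o^{T_{X_i}\omega}[\exp\{\langle\theta,X_1\rangle-\log\phi(\theta)\}]$ are replaced by $E_o^{T_{X_{\tau_i}}\omega}[\exp\{\langle\theta,X_{\tau_1}\rangle-\Lambda_a(\theta)\tau_1\}\,|\,\beta=\infty]$, whose i.i.d.\ property comes from the renewal structure. The residual single-block estimate (the analog of Lemma \ref{lessthanhalf}) splits as in (\ref{twosumsndd}): the contribution from $|y|>R$ is handled by Chebyshev under the renewal-tilted measure $\hat P_o^\theta$, whose exponential moments are finite by (\ref{ozggurk}), and the contribution from $|y|\leq R$ is controlled by the Chebyshev step (\ref{chebyineq}) for $d=2$ and by the quadratic-tilt argument of Subsection \ref{ayyy} for $d=3$, \emph{provided} the renewal expectation (\ref{krusilyeni}) grows linearly in $n$. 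By the renewal decomposition, this linear growth reduces exactly to $n\mu$ with $\mu$ as in (\ref{theassumption}), so strict positivity of $\mu$ --- our hypothesis --- is the crucial input.

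I expect the main obstacle to be controlling the variance of $D(B_1)$ under the tilt. In the space-time setting each site is visited at most once, so the relevant covariance structure is diagonal; in the space-only setting a single regeneration block can revisit sites, introducing correlated contributions to $D(B_1)$. The exponential tail of $\tau_1$ furnished by (\ref{serdar}) keeps the total correction at $O(n)$, which is absorbed into the leading $O(n^{3/2})$ ($d=2$) or $O(n^2\log n)$ ($d=3$) variance bound from Section \ref{SpaceTimeSection}. A secondary bookkeeping point is that the local CLT of Lemma \ref{localCLTrefined} must be re-established under the renewal-tilted $\hat P_o^\theta$; but (\ref{ozggurk}) gives enough exponential moments to invoke the Bhattacharya--Ranga Rao expansion once more, so the proof closes.
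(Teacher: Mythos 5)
Your proposal follows essentially the same route as the paper: you define $\mathcal{A}_{so}:=\nabla\Lambda_a(\mathcal{C}(c_3))$ and obtain (i)--(ii) from Lemma \ref{zikkim} via the inverse function theorem and convex duality, and for (iii) you reduce (\ref{theassumption}) to (\ref{serce}) through the same block decomposition, tilting, renewal-structure replacement of the i.i.d.\ one-step factors, and variance/local-CLT bookkeeping outlined in the paper, then conclude with Lemma \ref{bomontiymis}. This matches the paper's argument, including its treatment of within-block revisits via the exponential tails of $\tau_1$.
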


\begin{proof}
Recall (\ref{metkelsi}), and define $$\mathcal{A}_{so}:=\{\nabla\Lambda_a(\theta):\theta\in\mathcal{C}(c_3)\}.$$ It follows from Lemma \ref{zikkim} and the inverse function theorem that $I_a$ is strictly convex and analytic on $\mathcal{A}_{so}$ which is an open set containing $\xi_o$.

Take any $\xi\in\mathcal{A}_{so}$. Note that $\theta:=\nabla I_a(\xi)$ satisfies $\xi=\nabla\Lambda_a(\theta)$ by convex duality. As outlined above, (\ref{theassumption}) implies (\ref{serce}). Hence, the desired result follows from Lemma \ref{bomontiymis}.
\end{proof}

\subsection{Proof of Theorem \ref{QneqAnn}}

Consider space-only RWRE on $\mathbb{Z}^d$ with $d=2,3$. Fix a triple $p=(p^+,p^o,p^-)$ of positive real numbers such that $p^-<p^+$ and $p^++p^o+p^-=1$. Assume that $\mathbb{P}$ is in class $\mathcal{M}_\epsilon(d,p)$ for some small $\epsilon>0$, c.f.\ Definition \ref{gaffur}. Assume that $\epsilon\leq\frac{p^o}{4(d-1)}$ so that the ellipticity constant $\kappa$ of the walk satisfies 
\begin{equation}\label{kapis}
\kappa\geq\min\left(p^+,p^-,\frac{p^o}{4(d-1)}\right).
\end{equation}
\begin{lemma}\label{alisan}
There exist $C_4\geq1$ and $c_5>0$ (depending only on $p$) such that $|\Lambda_a(\theta)-\langle\theta,\xi_o\rangle|\leq C_4|\theta|^2$ holds for every $\theta\in\mathcal{C}(c_5)$.
\end{lemma}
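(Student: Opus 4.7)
The plan is to Taylor-expand $\Lambda_a$ at the origin and bound its Hessian uniformly on a small ball. By Lemma~\ref{zikkim}, $\Lambda_a$ is analytic on $\mathcal{C}(c_3)$ with $\Lambda_a(0)=0$ and $\nabla\Lambda_a(0)=\xi_o$, so for any $c_5\in(0,c_3)$ and any $\theta\in\mathcal{C}(c_5)$,
\begin{equation*}
\Lambda_a(\theta)-\langle\theta,\xi_o\rangle=\int_0^1(1-t)\langle\theta,\mathcal{H}_a(t\theta)\theta\rangle\,dt.
\end{equation*}
Hence the lemma reduces to producing an operator-norm bound $\|\mathcal{H}_a(\theta')\|\leq 2C_4$ valid for all $\theta'\in\mathcal{C}(c_5)$, with $C_4=C_4(p)$.

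To obtain such a bound, I would differentiate the identity \eqref{cevikuff} twice in $\theta$ (analyticity from Lemma~\ref{zikkim} justifies differentiation under the expectation) to arrive at the standard regeneration representation
\begin{equation*}
\mathcal{H}_a(\theta)=\frac{E_o\!\left[(X_{\tau_1}-\nabla\Lambda_a(\theta)\tau_1)^{\otimes 2}\,e^{\langle\theta,X_{\tau_1}\rangle-\Lambda_a(\theta)\tau_1}\,\middle|\,\beta=\infty\right]}{E_o\!\left[\tau_1\,e^{\langle\theta,X_{\tau_1}\rangle-\Lambda_a(\theta)\tau_1}\,\middle|\,\beta=\infty\right]}.
\end{equation*}
Since $\tau_1\geq 1$, the denominator is at least $E_o[e^{\langle\theta,X_{\tau_1}\rangle-\Lambda_a(\theta)\tau_1}|\beta=\infty]=1$ by \eqref{cevikuff}. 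Using $|X_{\tau_1}|\leq\tau_1$, $|\nabla\Lambda_a(\theta)|\leq 1$ (from the fact that $\Lambda_a$ is $1$-Lipschitz because $|X_N|\leq N$), and $|\Lambda_a(\theta)|\leq|\theta|$, the numerator is dominated in operator norm by $4\,E_o[\tau_1^2 e^{2|\theta|\tau_1}\,|\,\beta=\infty]$.

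It remains to see that this last moment is bounded on $\mathcal{C}(c_5)$ by a quantity depending only on $p$. Under Definition~\ref{gaffur} one has $\mathrm{ess\,inf}\,\langle v(\cdot),e_d\rangle=p^+-p^->0$, while \eqref{kapis} bounds the uniform ellipticity constant $\kappa$ from below by $\min(p^+,p^-,p^o/(4(d-1)))$; both inputs depend only on $p$. The Sznitman estimates \eqref{serdar} of \cite{SznitmanSlowdown} may therefore be applied with constants $c_2,c_3$ and envelope $H(\cdot)$ from \eqref{yoruk} chosen to depend only on $p$. Picking $c_5$ so that $3c_5<c_3$ and invoking the universal bound $\tau_1^2\leq C\,e^{c_5\tau_1}$ yields $E_o[\tau_1^2 e^{2|\theta|\tau_1}\,|\,\beta=\infty]\leq C\,H(3c_5)$, giving the desired $p$-dependent constant $C_4$.

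The only delicate point I anticipate is verifying that the Sznitman tail bound \eqref{serdar} is genuinely quantitative in $\kappa$ and the non-nestling gap alone, so that the constants $c_2,c_3$ and $H(\cdot)$ really depend on $\mathbb{P}$ only through quantities controlled by $p$; this can be read off from the proofs in \cite{SznitmanSlowdown} but requires some care to track through.
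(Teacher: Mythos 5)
Your proof is correct and takes essentially the same route as the paper: the paper likewise reduces the claim to the analyticity and Hessian control provided by Lemma \ref{zikkim} (whose proof in \cite{YilmazAveraged} rests on exactly the regeneration representation of $\mathcal{H}_a$ that you obtain by differentiating (\ref{cevikuff})), with the constants controlled through (\ref{serdar})--(\ref{yoruk}) and (\ref{kapis}). The one ``delicate point'' you flag is moot in this setting: by Definition \ref{gaffur}(b) the $e_d$-increments of the walk have law $(p^+,p^o,p^-)$ under $P_o^\omega$ for every $\omega$, so the laws of $\tau_1$ and of $\{\beta=\infty\}$---and hence $c_2$, $c_3$ and $H(\cdot)$---are determined by $p$ exactly, with no need to track constants through \cite{SznitmanSlowdown}; this is precisely the observation with which the paper's proof begins.
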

\begin{proof}
Recall (\ref{serdar}). Note that $c_3$ depends only on the law of the regeneration times which, in turn, is determined by the fixed triple $p$. Moreover, the ellipticity constant $\kappa$ of the walk satisfies (\ref{kapis}). Fix any $c_5<c_3$. The desired result follows immediately from Lemma \ref{zikkim}.
\end{proof}

Consider the set $$\mathcal{C}_t(c_5):=\{\theta\in\mathcal{C}(c_5):\langle\theta,e_d\rangle=0\}.$$
(Here, the subscript stands for \textit{transversal}.) Take any $\theta\in\mathcal{C}_t(c_5)$. Recall the notation in (\ref{isimmm}). Since $\mathbb{P}$ is in class $\mathcal{M}_\epsilon(d,p)$, it is easy to see that
\begin{equation}\label{cokazkaldihaci}
\xi_o=(p^+-p^-)e_d,\qquad\langle\theta,\xi_o\rangle=0,\qquad\mbox{and}\qquad |a(\theta,x)|=|\langle\theta,v(T_x\omega)\rangle|\leq2\epsilon(d-1)|\theta|
\end{equation}
for every $x\in\mathbb{Z}^d$. Similarly, the isotropy assumption ensures that
$$Z(\theta)=Z(\theta,X,\tau_1,\omega):=\sum_{x\in S(X,\tau_1)}\!\!\!\!\!a(\theta,x)$$
satisfies
\begin{equation}\label{isara}
E_o[Z(\theta)|\beta=\infty]=E_o[\tau_1 Z(\theta)|\beta=\infty]=0.
\end{equation}

Our aim is to show that
\begin{equation}\label{cc}
E_o[\exp\{\langle\theta,X_{\tau_1}\rangle-\Lambda_a(\theta)\tau_1\}Z(\theta)|\beta=\infty]>0
\end{equation} 
for certain choices of $\theta$, to be determined later. Expanding the exponential on the LHS of (\ref{cc}), we see that
\begin{align}
E_o[\exp\{\langle\theta,X_{\tau_1}\rangle-\Lambda_a(\theta)\tau_1\}Z(\theta)|\beta=\infty]&\geq E_o[(1+\langle\theta,X_{\tau_1}\rangle-\Lambda_a(\theta)\tau_1)Z(\theta)|\beta=\infty]-C_5|\theta|^3\label{gurrbb}\\
&= E_o[\langle\theta,X_{\tau_1}\rangle Z(\theta)|\beta=\infty]-C_5|\theta|^3.\label{kuntin}
\end{align}
Indeed,
(\ref{gurrbb}) follows from $|Z(\theta)|\leq 2\epsilon(d-1)|\theta|\tau_1$ 
and
$$1+\langle\theta,X_{\tau_1}\rangle-\Lambda_a(\theta)\tau_1\leq
\exp\{\langle\theta,X_{\tau_1}\rangle-\Lambda_a(\theta)\tau_1\}\leq1+
\langle\theta,X_{\tau_1}\rangle-\Lambda_a(\theta)\tau_1 + 
2|\theta|^2\tau_1^2\exp\{2|\theta|\tau_1\},$$ 
$C_5$ is some constant that depends only on $p$ and $c_5$ and
finally, (\ref{isara}) implies \eqref{kuntin}.
 
In order to estimate (\ref{kuntin}), we first provide a more convenient 
representation of the
RWRE. Let $(b_i)_{i\geq0}$ be an i.i.d.\ sequence of random variables taking values in $\{e_d,0,-e_d\}$, with 
$$P(b_1=e_d)=p^+,\quad P(b_1=0)=p^o,\quad\mbox{and}\quad P(b_1=-e_d)=p^-.$$
Let $(f_i)_{i\geq0}$ be another i.i.d.\ sequence of random variables (independent of $(b_i)_{i\geq0}$) taking values in the set $\{\pm e_j:1\leq j<d\}\cup\{0\}$, with $$P(f_1=0)=\frac{2\epsilon(d-1)}{p^o}\quad\mbox{and}\quad P(f_1=\pm e_j)=\frac{1}{2(d-1)}-\frac{\epsilon}{p^o}\quad\mbox{if}\ 1\leq j<d.$$
For any $\omega\in\Omega$, the walk $(X_i)_{i\geq0}$ under $P_o^\omega$ can be constructed by setting
$$X_{i+1}-X_i:=b_i+(1-|b_i|)f_i+(1-|b_i|)(1-|f_i|)U_i\,,$$
where $(U_i)_{i\geq0}$ is a sequence of independent random variables taking values in $\{\pm e_j:1\leq j<d\}$, with
$$P^\omega(U_i=\pm e_j|\mathcal{F}_i)=\frac{\pi(X_i,X_i\pm e_j)-(\frac{p^o}{2(d-1)}-\eps)}{2\epsilon(d-1)}.$$
Here, ${\mathcal F}_i=\sigma(X_1,\ldots,X_i)$. Note that the laws of the sequences $(b_i)_{i\geq0}$ and $(f_i)_{i\geq0}$ do not depend on the environment, 
and that $\tau_1$ is a function of $(b_i)_{i\geq0}$ only.

Let 
$$N_i:=\sum_{j=0}^{i-1}\one_{1=(1-|b_i|)(1-|f_i|)}\,.$$
Introduce the events
$L_0:=\{ N_{\tau_1}=0\}$, $L_1:=\{ N_{\tau_1}=1\}$, and $L_2:=\{ N_{\tau_1}\geq 2\}$.
Let $\mathcal{G}:=\sigma((b_i,f_i)_{i\geq0})$.
Note that the events $L_0,L_1$ and $L_2$ are ${\mathcal G}$-measurable, and so is
the event $\{\beta=\infty\}$.
On the event $L_0$, the walker never sees the environment until $\tau_1$, and thus $X_{\tau_1}$ is ${\mathcal G}$-measurable.
Also, for any $i\geq0$, on the event $\{X_i\notin S(X,i)\}$ (i.e., when $X_i$ is a fresh point), $a(\theta,X_i)$ is independent of $\mathcal{F}_i$ and $\mathcal{G}$ under $P_o$.
Therefore, by isotropy,
$$E_o[Z(\theta)|\mathcal{G}]=E_o\left[\left.\sum_{i=0}^{\tau_1-1}a(\theta,X_i)\one_{X_i\notin S(X,i)}\right|\mathcal{G}\right]=0.$$ 
Putting these observations together, we see that
\begin{equation}\label{birincicinko}
E_o[\langle\theta,X_{\tau_1}\rangle Z(\theta),L_0,\beta=\infty]=E_o[\langle\theta,X_{\tau_1}\rangle E_o[Z(\theta)|\mathcal{G}],L_0,\beta=\infty]=0.
\end{equation}

On the other hand, it is easy to check that 
$P_o(L_2)\leq c_6\epsilon^2$ for some $c_6=c_6(p)$. By H\"{o}lder's inequality,
\begin{equation}\label{ikincicinko}
|E_o[\langle\theta,X_{\tau_1}\rangle Z(\theta),L_2,\beta=\infty]|\leq P_o(L_2)^{2/3}E_o[|\langle\theta,X_{\tau_1}\rangle Z(\theta)|^3,\beta=\infty]^{1/3}\leq c_7\epsilon^{7/3}|\theta|^2
\end{equation}
for some $c_7=c_7(p)>0$. (Recall that $a(\theta,\cdot)\leq2\epsilon(d-1)|\theta|$, c.f.\ (\ref{cokazkaldihaci}).)

Finally, let 
$L_1^\ell=L_1\cap\{(1-|b_\ell|)(1-|f_\ell|)=1, \ell<\tau_1\}$. Then,
\begin{equation}\label{tombala}
E_o[\langle\theta,X_{\tau_1}\rangle Z(\theta),L_1,\beta=\infty]=\sum_{\ell=0}^\infty E_o[\langle\theta,X_{\tau_1}\rangle Z(\theta),L_1^\ell,\beta=\infty].
\end{equation}
For every $\ell\geq0$,
\begin{align}
E_o[\langle\theta,X_{\tau_1}\rangle Z(\theta),L_1^\ell,\beta=\infty]&=E_o[\langle\theta,X_\ell\rangle Z(\theta),L_1^\ell,\beta=\infty]\label{zerrin}\\
&\quad+E_o[\langle\theta,X_{\ell+1}-X_\ell\rangle Z(\theta),L_1^\ell,\beta=\infty]\nonumber\\
&\quad+E_o[\langle\theta,X_{\tau_1}-X_{\ell+1}\rangle Z(\theta),L_1^\ell,\beta=\infty].\nonumber
\end{align}
By computations similar to the one involving $L_0$, the first and the third terms on the RHS of (\ref{zerrin}) are zero.
The second term is equal to
\begin{align*}
&E_o[\langle\theta,X_{\ell+1}-X_\ell\rangle\langle\theta,v(T_{X_\ell}\omega)\rangle,L_1^\ell,\beta=\infty]=P_o(L_1^\ell,\beta=\infty)\mathbb{E}[E^\omega[\langle\theta,U_o\rangle]\langle\theta,v(\omega)\rangle]\\
&\qquad=P_o(L_1^\ell,\beta=\infty)\mathbb{E}\left[\left(\sum_{z\neq\pm e_d}\frac{\pi(0,z)-(\frac{p^o}{2(d-1)}-\eps)}{2\epsilon(d-1)}\langle\theta,z\rangle\right)\langle\theta,v(\omega)\rangle\right]\\
&\qquad=\frac{P_o(L_1^\ell,\beta=\infty)}{2\epsilon(d-1)}\mathbb{E}\left[\langle\theta,v(\omega)\rangle^2\right].
\end{align*}
Therefore, by (\ref{tombala}),
$$E_o[\langle\theta,X_{\tau_1}\rangle Z(\theta),L_1,\beta=\infty]=\frac{P_o(L_1,\beta=\infty)}{2\epsilon(d-1)}\mathbb{E}\left[\langle\theta,v(\omega)\rangle^2\right].$$
It is easy to see that $P_o(L_1,\beta=\infty)\geq c_8\epsilon$ 
for some $c_8=c_8(p)>0$ if $\epsilon$ is small enough. Also, part (c) of Definition \ref{gaffur} ensures that $\mathbb{E}\left[\langle\theta,v(\omega)\rangle^2\right]\geq c_9\epsilon^2|\theta|^2$ for some $c_9=c_9(p)>0$. Hence,
\begin{equation}\label{cardak}
E_o[\langle\theta,X_{\tau_1}\rangle Z(\theta),L_1,\beta=\infty]\geq c_{10}\epsilon^2|\theta|^2
\end{equation}  for some $c_{10}=c_{10}(p)>0$.
Combining (\ref{birincicinko}), (\ref{ikincicinko}) and (\ref{cardak}) gives
\begin{align}
E_o[\langle\theta,X_{\tau_1}\rangle Z(\theta)|\beta=\infty]-C_5|\theta|^3&\geq c_{10}\epsilon^2|\theta|^2 - c_7\epsilon^{7/3}|\theta|^2 - C_5|\theta|^3\label{obarey}\\
&=\left((c_{10} - c_7\epsilon^{1/3})\epsilon^2 - C_5|\theta|\right)|\theta|^2.\nonumber
\end{align}
If $\epsilon<(c_{10}/c_7)^3$, then, for every $\theta\in\mathcal{C}_t(c_5)$ such that $0<|\theta|<(c_{10} - c_7\epsilon^{1/3})\epsilon^2/C_5$, 
$$E_o[\exp\{\langle\theta,X_{\tau_1}\rangle-\Lambda_a(\theta)\tau_1\}Z(\theta)|\beta=\infty]>0$$
by (\ref{kuntin}) and (\ref{obarey}).

Finally, Theorem \ref{nesin} implies that $I_a<I_q$ on the set $$\{\nabla\Lambda_a(\theta): \theta\in\mathcal{C}_t(c_5),\ 0<|\theta|<(c_{10} - c_7\epsilon^{1/3})\epsilon^2/C_5\}$$ whose closure contains the LLN velocity $\xi_o=\nabla\Lambda_a(0)$. We have proved Theorem \ref{QneqAnn}.

\section{Open problems}
\label{sec-openprob}
Our technique of proof puts several restrictions 
on  the class of models treated. The following are natural questions we have 
not addressed.
\begin{enumerate}
\item Does Theorem \ref{QneqAnn} extend to all space-only
RWRE in dimension  $d=2,3$, or at least to those satisfying
Sznitman's condition ({\bf{T}})? Note that, for non-nestling walks, it suffices to show that the correlation condition (\ref{theassumption}) is satisfied on a sequence $(\theta_n)_{n\geq1}$ that converges to zero, c.f.\ Theorem \ref{nesin}.
\item In case $\sum \pi(0,z)\langle z,e\rangle$ is random for any
$e\in \mathcal{R}_{so}$,
is it true that $I_q(\xi)=I_a(\xi)$ only 
when $\xi=0$ or $I_a(\xi)=0$, as is the case in dimension $d=1$?
\end{enumerate}

In our proof of Theorem \ref{QneqAnn} (specifically,
in the proof of the correlation condition
(\ref{theassumption})),
we used the isotropy 
assumption in order to get rid of a centering term under the
(untilted) measure; this does not seem essential and probably, the lack of 
isotropy could be handled in the perturbative regime. 
However, getting rid of
the perturbative restriction, 
or of the non-randomness in the $e_d$ direction, 
requires additional arguments. 
\section*{Acknowledgments}
This research was supported partially by a grant from the 
Israeli Science Foundation, and by the 
Alhadeff Fund at the Weizmann Institute. 
We thank Francis Comets for providing us with an update on 
polymer models and bringing the work of Lacoin \cite{Lacoin}
to our attention.

\bibliographystyle{plain}
\bibliography{atilla_references}
\end{document}